\documentclass[12pt]{amsart}

\usepackage[hmargin=0.8in,height=8.6in]{geometry}
\usepackage{amssymb,amsthm}
\usepackage{delarray,verbatim}

\usepackage{ifpdf}
\ifpdf
\usepackage[pdftex]{graphicx}
\DeclareGraphicsRule{*}{mps}{*}{}
\else
\usepackage[dvips]{graphicx}
\DeclareGraphicsRule{*}{eps}{*}{}
\fi

\usepackage{ifpdf}
\usepackage{color}
\definecolor{webgreen}{rgb}{0,.5,0}
\definecolor{webbrown}{rgb}{.8,0,0}
\definecolor{emphcolor}{rgb}{0.95,0.95,0.95}


\renewcommand{\theequation}{\thesection.\arabic{equation}}
\numberwithin{equation}{section} \linespread{1.2}

\newtheorem {thm}{Theorem}[section]
\newtheorem {prop}{Proposition}[section]
\newtheorem {lemm}{Lemma}[section]

\newtheorem*{assumption}{Assumption}

\theoremstyle{remark}
\newtheorem {rem}{Remark}[section]

 \DeclareMathOperator{\argmax}{argmax}

\newcommand{\R}{\mathbb R}
\newcommand{\N}{\mathbb N}
\newcommand{\Z}{\mathbb Z}
\newcommand{\PP}{\mathbb P}
\renewcommand{\P}{\mathbb P}
\newcommand{\A}{\mathcal A}
\newcommand{\cZ}{\mathcal Z}
\newcommand{\cP}{\check P}
\newcommand{\E}{\mathbb E}

\newcommand{\M}{\mathcal M}

\newcommand{\U}{\mathcal U}

\newcommand{\F}{\mathbb F}

\renewcommand{\bar}{\overline}

\newcommand{\s}{\mathcal{S}}
\renewcommand{\emptyset}{\varnothing}
\newcommand{\Fc}{\mathcal F}
\newcommand{\vP}{\vec{\Pi} }
\newcommand{\vp}{\vec{\pi} }
\newcommand{\vz}{\vec{z} }
\newcommand{\vx}{\vec{x} }
\newcommand{\eps}{\varepsilon}
\newcommand{\e}{\mathrm{e}}
\newcommand{\G}{\mathcal{G}}
\renewcommand{\S}{\mathcal{S}}


\title[Inventory Management with Partially Observed Nonstationary Demand ]{Inventory Management with Partially Observed Nonstationary Demand}

\author{Erhan Bayraktar }
\address[E. Bayraktar]{Department of
  Mathematics, University of Michigan, Ann Arbor, MI 48109}
\email{erhan@umich.edu}
\thanks{E. Bayraktar is supported in part by the National Science Foundation, under grant DMS-0604491. }

\author{Michael Ludkovski}
\address[M.\ Ludkovski]{Department of Statistics and Applied Probability, University of California Santa Barbara, CA 93106-3110}
\email{ludkovski@pstat.ucsb.edu}

\keywords{Inventory management, Markov modulated Poisson process, hidden Markov model,
partially observable demand, censored demand}

\begin{document}
\begin{abstract}
We consider a continuous-time model for inventory management with Markov modulated
non-stationary demands. We introduce active learning by assuming that the state of the world is
unobserved and must be inferred by the manager. We also assume that demands are observed only
when they are completely met. We first derive the explicit filtering equations and pass to an
equivalent fully observed impulse control problem in terms of the sufficient statistics, the a
posteriori probability process and the current inventory level. We then solve this equivalent
formulation and directly characterize an optimal inventory policy. We also describe a
computational procedure to calculate the value function and the optimal policy and present two
numerical illustrations.
\end{abstract}

\maketitle
\section{Introduction}\label{sec:intro}
Inventory management aims to control the supply ordering of a firm so that inventory costs are
minimized and a maximum of customer orders are filled. These are competing objectives since low
inventory stock reduces storage and ordering costs, whereas high inventory avoids stock-outs.
The problem is complicated by the fact that real-life demand is never stationary and therefore
the inventory policy should be non-stationary as well. A popular method of addressing this
issue is to introduce a regime-switching (or Markov-modulated) demand model. The regime is
meant to represent the economic environment faced by the firm and drives the parameters
(frequency and size distribution) of the actual demand process. However, typically this
economic environment is unknown. From a modeling perspective, this leads to a \emph{partially
observed} hidden Markov model for demand. Thus, the inventory manager must simultaneously learn
the current environment (based on incoming order information), and adjust her inventory policy
accordingly in anticipation of future orders.

The literature on inventory management with non-stationary Markovian demand originated with
\cite{SongZipkin} who considered a continuous-time model where the demand \emph{levels} or
intensity are modulated by the state of the world, which are assumed to be observable. The
discrete-time counterpart of this model was then analyzed by \cite{SethiCheng97} who allowed a
very general cost structure and proved a more formal verification theorem for existence and
regularity of the value function and existence of an optimal feedback policy. More recent work
on \emph{fully observed} non-stationary demand can be found in \cite{BensoussanLiuSethi05}.

Inventory management with {partial information} is a classical topic of operations research. In
the simplest version as analyzed by \cite{Azoury85,Lovejoy90} and references therein, the
demand distribution is unknown and must be \emph{learned} by the controller. Thus, a
finite-horizon discrete time {parameter adaptive} model is considered, so that the demand
distribution is taken to be stationary and i.i.d., but with an unknown parameter. This
parameter is then inferred over time using either an exponential smoothing or a Bayesian
learning mechanism. In these papers the method of solution relied on the special structure of
some particular cases (e.g.\ uniform demand level on $[0,w]$, with $w$ unknown) where a
dimension reduction is possible, so that the learning update is simplified. Even after that,
the problem remains extremely computationally challenging; accordingly the focus of
\cite{Lovejoy90} has been on studying approximate myopic or limited look-ahead policies.

Another important strand of literature investigates the \emph{lost sales} uncertainty. In that
case, demand is observed only if it is completely met; instead of a back-order, unmet demand is
lost. This then creates a partial information problem if demand levels are non-stationary. In
particular, demand levels evolving according to a discrete-time Markov chain have been
considered in the newsvendor context (completely perishable inventory) by
\cite{LarivierePorteus,BeyerSethi05,BensoussanMOR07}, and in the general inventory management
case by \cite{BensoussanMinjarez08}. \cite{BensoussanSethiCR05,BensoussanEtalSICON07} have
analyzed the related case whereby current inventory level itself is uncertain.

The main inspiration for our model is the work of \cite{TreharneSox} who considered a version
of the \cite{SethiCheng97} model but under the assumption that current world state is
\emph{unknown}. The controller must therefore filter the present demand distribution to obtain
information on the core process. \cite{TreharneSox} take a partially observed Markov decision
processes (POMDP) formulation; since this is computationally challenging, they focus on
empirical study of approximate schemes, in particular myopic and limited look-ahead learning
policies, as well as open-loop feedback learning. A related problem of \emph{dynamic pricing}
with unobserved Markov-modulated demand levels has been recently considered by
\cite{AvivPazgal05}. In that paper, the authors also work in the POMDP framework and propose a
different approximation scheme based on the technique of information structure modification.

In this paper we consider a continuous-time model for inventory management with Markov
modulated non-stationary demands. We take the \cite{SongZipkin} model as the starting point and
introduce active learning by assuming that the state of the core process is unobserved and must
be inferred by the controller. We will also assume that the demand is observed only when it is
completely met, otherwise censoring occurs. Our work extends the results of
\cite{BensoussanMOR07,BensoussanMinjarez08} and \cite{TreharneSox} to the continuous-review
asynchronous setting. Use of continuous- rather than discrete-time model facilitates some of
our analysis. It is also more realistic in medium-volume problems with many asynchronous orders
(e.g.\ computer hardware parts, industrial commodities, etc.), especially in applications with
strong business cycles where demand distribution is highly non-stationary. In a continuous-time
model, the controller may adjust inventory immediately after a new order, but also between
orders. This is because the controller's beliefs about the demand environment are constantly
evolving. Such qualitative distinction is not possible with discrete epochs where controls and
demand observations are intrinsically paired.

Our method of solution consists of two stages. In the first stage (Section~\ref{sec:probStat}),
we derive explicit filtering equations for the evolution of the conditional distribution of the
core process. This is non-trivial in our model where censoring links the observed demand
process with the chosen control strategy. We use the theory of partially observed point
processes to extend earlier results of \cite{Arjas92,LS07,baylud08} in Proposition~\ref{cor:pdp}.
The piecewise deterministic strong Markov process obtained in Proposition~\ref{cor:pdp} allows us
then to give a simplified and complete proof of the dynamic programming equations, and describe
an optimal policy (see Section~\ref{sec:sequential}). We achieve this by leveraging the general
probabilistic arguments for the impulse control of piecewise deterministic processes of
\cite{davis93,CostaDavis89,MR1150206} and using direct arguments to establish necessary
properties of the value function. Our approach is in contrast with the POMDP and
quasi-variational formulations in the aforementioned literature that make use of more analytic
tools.

Our framework also leads to a different flavor for the numerical algorithm. The closed-form
formulas obtained in Sections \ref{sec:probStat} and \ref{sec:sequential} permit us to give a
direct and simple-to-implement computational scheme that yields a precise solution. Thus, we do
not employ any of the approximate policies proposed in the literature, while maintaining a
competitive computational complexity.

To summarize, our contribution is a full and integrated analysis of such incomplete information
setting, including derivation of the filtering equations, characterization of an optimal policy
and a computationally efficient numerical algorithm. Our results show the feasibility of using
continuous-time partially observed models in inventory management problems. Moreover, our model
allows for many custom formulations, including arbitrary ordering/storage/stock-out/salvage
cost structures, different censoring formulations, perishable inventory and supply-size
constraints.

The rest of the paper is organized as follows. In the rest of the introduction we will give an
informal description of the inventory management problem we are considering. In
Section~\ref{sec:probStat} we make the formulation more precise and show that the original
problem is equivalent to a fully observed impulse control problem described in terms of
sufficient statistics. Moreover, we characterize the evolution of the paths of the sufficient
statistics. In Section~\ref{sec:sequential} we show that the value function is the unique fixed
point of a functional operator (defined in terms of an optimal stopping problem) and that it is
continuous in all of its variables. Using the continuity of the value function we describe an
optimal policy in Section~\ref{sec:optimal-stragegy}. In Section~\ref{sec:examples} we describe
how to compute the value function using an alternative characterization. Finally, in Section
\ref{sec:illust} we present two numerical illustrations. Some of the longer proofs are left to
the Appendix.

\subsection{Model Description}
In this section we give an informal description of the inventory management with partial
information and the objective of the controller. A rigorous construction is in Section
\ref{sec:probStat}.

Let $M$ be the unobservable Markov core process for the economic environment. We assume that
$M$ is a continuous-time Markov chain with finite state space $E \triangleq \{ 1, \ldots, m\}$.
The core process $M$ modulates customer orders modeled as a compound Poisson process $X$. More
precisely, let $Y_1, Y_2, \ldots$ be the consecutive order sizes, taking place at times
$\sigma_1, \sigma_2, \ldots$. Define
\begin{align}
N(t) & = \sup\{ s : \sigma_s < t \}
\end{align}
to be the number of orders received by time $t$, and
\begin{align}
X_t = \sum_{k=1}^{N(t)} Y_k,
\end{align}
to be the total order size by time $t$. Then, the intensity of $N$ (and $X$) is $\lambda_i$
whenever $M$ is at state $i$, for $i \in E $. Similarly, the distribution of $Y_k$ is $\nu_i$
conditional on $M_{\sigma_k} = i$. This structure is illustrated schematically in Figure
\ref{fig:M-X-diagram}. The assumption of demand following a compound Poisson process is
standard in the OR literature, especially when considering large items (the case of demand
level following a jump-diffusion is investigated by \cite{BensoussanLiuSethi05}).

\begin{figure}
\centering{\includegraphics*[height=4in,width=4in,clip]{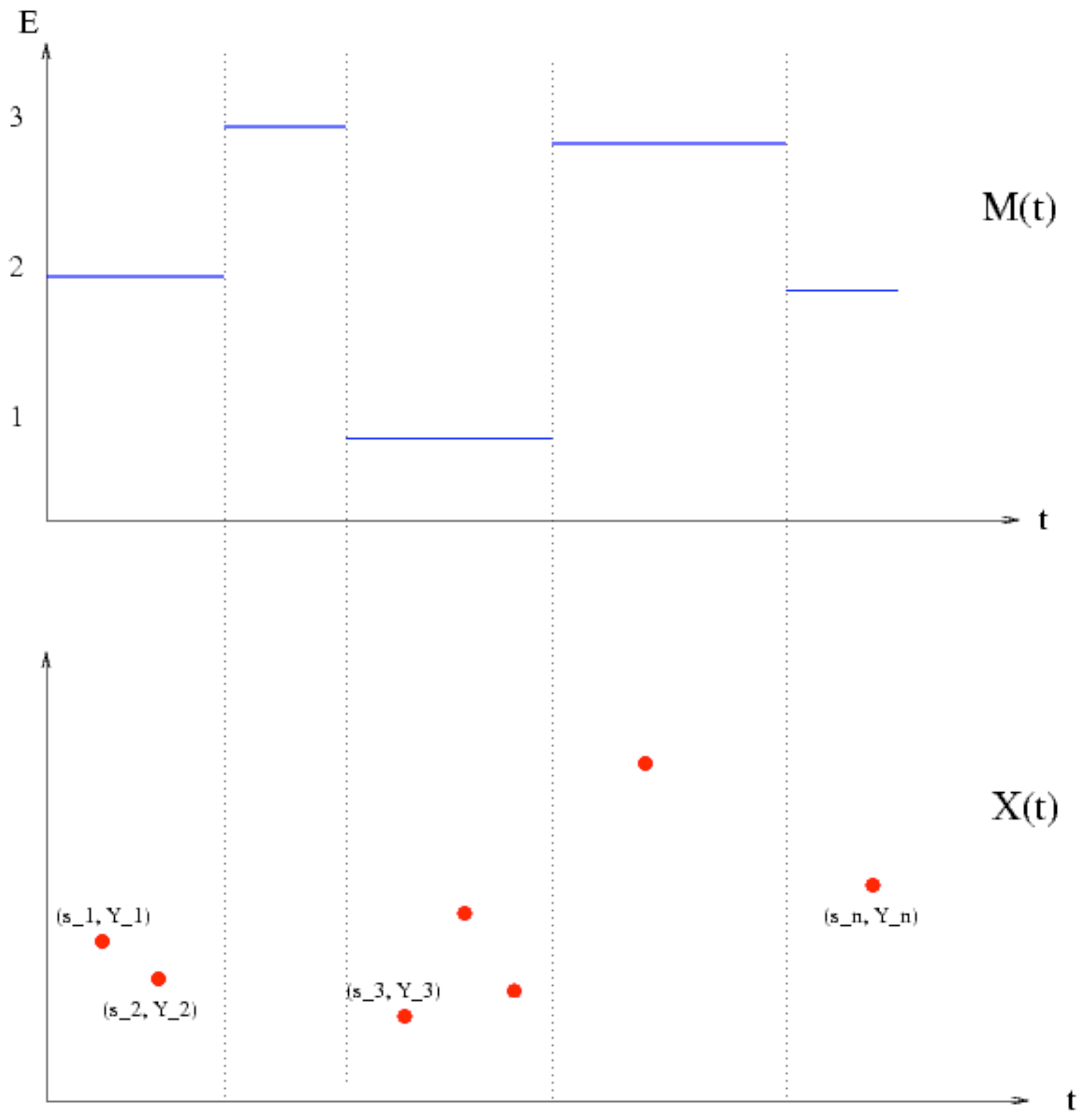}} \caption{The
environment process $M$ modulates the marked point process $X = ( (\sigma_1, Y_1), \ldots)$
representing demand order flow. In this illustration, $M$ takes the values $M_t \in \{1,2,3\} =
E$, and the mark distributions are arranged such that $\mu_1 < \mu_2 < \mu_3$ where $\mu_i =
\E[ Y | M_t = i]$. Also, the intensities are arranged $\lambda_3 < \lambda_2 < \lambda_1$, with
$\lambda_i = \E[ \sigma_1 | M_u = i, 0 \le u \le T]$. Thus, in regime 1 demand is frequent and
demand amounts are small, in regime 2 demand is average and amounts are small to average, and
in regime 3 demand is rare but consists of relatively large orders.\label{fig:M-X-diagram}}
\end{figure}

We assume that orders are integer-sized with a fixed upper bound $R$, namely
\begin{assumption}
Each $\nu_i$, $i \in E$ is a discrete bounded distribution on $\mathbb{Z}$, so that $Y_k \in
\{1, 2, \ldots, R\}$.
\end{assumption}

The controller cannot observe $M$; neither does she directly see $X$. Instead, she receives the
\emph{censored} order flow $W$. The order flow $W$ consists of filled order amounts $(Z_\ell)$,
which informally correspond to the minimum between actual order size and available inventory.
Hence, if total inventory is insufficient, a stock-out occurs and the excess order size is left
indeterminate. The model where order flow is always observed will also be considered as a
special case with zero censoring. The precise description of $W$ will be given in Section 2.1.

Let $P_t$ be the current inventory at time $t$. We assume that inventory has finite stock
capacity, so that $P_t \in [0, \bar{P}]$. Inventory changes are driven by two variables: filled
customer orders described by $W$ and supply orders. Customer orders are assumed to be
exogenous; every order is immediately filled to its maximum extent. When a stock-out occurs, we
consider two scenarios:
\begin{itemize}
\item If there is no censoring, then the excess order amount is immediately back-ordered at a
higher penalty rate.

\item If there is censoring, a lost opportunity cost is assessed in proportion to expected
excess order amount.
\end{itemize}
Otherwise, the inventory is immediately decreased by the order amount. Supply orders are
completely at the discretion of the manager. Let $\xi_1, \xi_2, \ldots$ denote the supply
amounts (without back-orders) and $\tau_1, \tau_2, \ldots$ the supply times (when supply order
is placed).

To summarize, the dynamics of $P$ are (compare to \eqref{eq:rel-pi-x} below):
\begin{align}\label{eq:P-dynamics} \left\{ \begin{array}{lrr}
P_{\sigma_k} &= (P_{\sigma_k -} - Z_k), & \text{fill customer order} \\
P_{\tau_k} &= P_{\tau_k-} + \xi_k, & \text{new supply order}\\
dP_t &= 0 &\text{otherwise}. \end{array} \right.
\end{align}

We assume that the manager can only increase inventory (no disposal is possible) and that
inventory never perishes. Alternatives can be straightforwardly dealt with and are considered
in a numerical example in Section~\ref{sec:examples}.

We denote the entire inventory strategy by the right-continuous piecewise constant process $\xi
\colon [0,T] \times \Omega \to \mathcal{A}$, with $\xi_t = \xi_k$ if $ \tau_{k} \leqslant t <
\tau_{k+1}$ or
\begin{align}\label{eq:xi}
\xi_t = \sum_{\tau_{k} \leq  T} \xi_k \cdot 1_{[\tau_k, \tau_{k+1})}(t).
\end{align}
The goal of the inventory manager is to minimize inventory costs as appearing in the objective
function \eqref{eq:objective} below among all \emph{admissible} inventory strategies $\xi$. The
admissibility condition concerns first and foremost the information set available to the
manager. Because only $W$ is observable, the strategy $\xi$ should be determined by the
information generated by $W$, namely each $\tau_k$ must be a stopping time of the filtration
$\F^W \triangleq\{\mathcal{F}^W_t\}$ of $W$. Similarly, the value of each $\xi_k$ is determined
by the information $\Fc^W_{\tau_k}$ revealed by $W$ until $\tau_k$. Also, without loss of
generality we assume that $\xi$ has a finite number of actions, so that $\P(\tau_k < T) \to 0$
as $k\to \infty$. Since strategies with infinitely many actions will have infinite costs, we
can safely exclude them from consideration. We denote by $\U(T)$ the set of all such
\emph{admissible strategies} on a time interval $[0,T]$.

The selected policy $\xi$ directly influences current stocks $P$; when we wish to emphasize
this dependence we will write $P_t \equiv P_t^\xi$. The cost of implementing $\xi$ is as
follows. First, inventory storage costs at fixed rate $c(P_t)$ are accessed. We assume that $c$
is positive, increasing and continuous. Second,  a supply order of size $\xi_k$ costs $h\cdot
\xi_k+\zeta$, for positive $h$ and $\zeta$. Finally, if a stock-out occurs due to insufficient
existing stock, a penalty reflecting the lost opportunity cost is assessed at amount $\E\left[
K( (Y_k - P_{\sigma_k-})_+ ) | \Fc^W_t \right]$. We assume that the penalty function $K$ is
positive and increasing with $K(0) = 0$. Thus, the total performance of a strategy $\xi$ on the
horizon $[0,T]$ is
\begin{align}\label{eq:objective}
\int_0^T \e^{- \rho t }   c(P_t) \, dt + \sum_{k : \tau_k < T} \e^{-\rho \tau_k}(h \cdot \xi_k
+ \zeta) + \sum_{\ell : \sigma_\ell < T} \e^{- \rho \sigma_\ell } K(
(Y_\ell-P_{\sigma_\ell-})_+ ),
\end{align}
where $\rho \ge 0$ is the discount factor for future revenue. Table \ref{table:params}
summarizes our notation and the meaning of the model parameters.

\begin{table}
\begin{tabular}{|c|l|} \hline
Parameter & Meaning \\ \hline $c(a)$ & storage cost for $a$ items per unit time \\
$K(a)$ & instantaneous stock-out cost for a shortage of $a$ items \\
$h $ &  order cost for one item \\
$\zeta$ & fixed cost of placing an order \\
$\rho$ & discount factor for NPV calculation \\
$\bar{P}$ & maximum inventory size \\
$R$ & maximum demand size \\
\hline
\end{tabular}
\medskip \caption{Parameters of the model\label{table:params}}
\end{table}

%
%
%
%
%
%
 %

Observe that the objective in \eqref{eq:objective} involves the distribution of $Z_k$'s (which
affect $P$) and $Y_k$'s (which enter into objective function through stock-out costs). Both of
these depend on the state $M$. Since the core process $M$ is unobserved, the controller must
therefore carry out a filtering procedure. We postulate that she collects information about $M$
via a Bayesian framework. Let $\vp = (\pi_1, \ldots, \pi_m) \triangleq \left( \P\{M_0 = 1\},
\ldots, \P\{M_0 = m\} \right)$ be the initial (prior) beliefs of the controller about $M$ and
$\P^{\vp}$ the corresponding conditional probability law. The controller starts with beliefs
$\pi$, observes $Z$, updates her beliefs and adjusts her inventory policy accordingly.

These notions and the precise updating mechanism will be formalized in Section \ref{sec:vP}.
The solution will then proceed in two steps: an initial filtering step and a second
optimization step. The inference step is studied in Section \ref{sec:probStat}, where we
introduce the a posteriori probability process $\vP$. The process $\vP$ summarizes the dynamic
updating of controller's beliefs about the Markov chain $M$ given her point process
observations. The optimal switching problem \eqref{def:U} is then analyzed in Section
\ref{sec:sequential}.

\section{Problem Statement}\label{sec:probStat}
In this section we give a mathematically precise description of the problem and show that this problem is equivalent to a
fully observed impulse control problem in terms of the strong Markov process $(\vP,P)$. We will also describe the dynamics of the sufficient statistics $(\vP,P)$.

\subsection{Core Process}
Let $(\Omega, \mathcal{H}, \P)$ be a probability space hosting two independent elements: (i)
a continuous time Markov process $M$ taking values in a finite set $E$, and with infinitesimal
generator $Q=(q_{ij})_{i,j \in E }$, (ii) $X^{(1)} , \ldots, X^{(m)} $ which are  independent
compound Poisson processes with intensities and discrete jump size distributions  $ (\lambda_1,
\nu_1), \ldots , (\lambda_m , \nu_m)$, respectively, $m \in E$.

The core point process $X$ is given by
\begin{align} \label{def:X} X_t \triangleq X_0 +
\int_0^t  \sum_{ i \in E} 1_{ \{ M_s =i \} } \, dX^{(i)}_s , \quad t\ge 0.
\end{align}
Thus, $X$ is a Markov-modulated Poisson process (see e.g. \cite{KarlinTaylor2}); by construction, $X$ has
independent increments conditioned on $M = \{ M_t\}_{t \ge 0}$. Denote by $\sigma_0, \sigma_1 ,
\ldots$ the arrival times of the process $X$,
\begin{align*}
\sigma_\ell \triangleq \inf \{ t > \sigma_{\ell-1} : X_t \ne X_{t- }\}, \qquad \ell \ge 1
\qquad \text{with $\sigma_0 \equiv 0$.}
\end{align*}
and by $Y_1, Y_2 ,\ldots$ the $\R$-valued marks (demand sizes) observed at these arrival times:
\begin{align*}
Y_\ell = X_{\sigma_\ell } - X_{\sigma_\ell- }, \qquad \ell \ge 1.
\end{align*}
Then conditioned on $\{M_{\sigma_\ell} = i\}$, the distribution of $Y_\ell$ is described by $\nu_i(dy)=f_i(y)dy$
on $( \Z_+, \mathcal{B}(\Z_+)) $.

\subsection{Observation Process}
Starting with the marked point process $(M,X)$, the observable is a point process $W$ which is
derived from $(M,X)$. This means that the marks of $W$ are completely determined by $(M,X)$
(and the control). Fix an initial stock $\cP^{(1)}_0 = a$. We first construct an auxiliary
process $\check{W}^{(1)}$. The first mark of $\check{W}^{(1)}$ is $(\sigma_1, Z_1)$, where
$\sigma_1$ is the first arrival time of $X$ and where the distribution of the first jump size
$Z_1 = (Z^1_1, Z^2_1)\in \cZ$ conditional on $(M,X)$ given by
$$ \P ( Z_1 = \vz | Y_1 = y, M_{\sigma_1} = i) = \mathcal{C}(y, \vz; {\cP^{(1)}_{\sigma_1-}}),$$
where $\mathcal{C}(y,\vz; a)$, $y \in \Z_+, \vz \in \cZ, a \in [0,\bar{P}]$ are the
$\{0,1\}$-valued \emph{censoring functions} satisfying $\sum_{\vz \in \cZ} \mathcal{C}(y,\vz;a)
\in \{0,1\}$. In our context, it is convenient to take $\cZ = \Z_+ \times \{ 0, \Delta\}$,
where $\vz \equiv (z^1, z^2) \in \cZ$ represents a filled order of size $z^1$, and the second
component $z^2$ indicates whether a stock-out occurred or not. Censoring of excess orders then
corresponds to $\mathcal{C}(y,\vz; p) = 1_{\{(0,p]\}}(y) 1_{\{(y,0)\}}(\vz) + 1_{\{(p, R)\}}(y)
1_{\{(\lfloor p \rfloor, \Delta)\}}(\vz)$. Alternatively, without censoring we take $\cZ = \Z_+
\times \Z_+$ (second component now indicating actual order size), and $\mathcal{C}(y,\vz; p) =
1_{\{(0,p]\}}(y)1_{\{(y,y)\}}(\vz) + 1_{\{(p,R]\}}(y)1_{\{(\lfloor p \rfloor, y)\}}(\vz)$.

Once $(\sigma_1, Z_1)$ is observed, we update $\cP^{(1)}_{\sigma_1} = a - Z^1_1 \ge 0$, and set
$\cP^{(1)}_{t} = \cP^{(1)}_{\sigma_1}$ for $\sigma_1 \le t < \sigma_2$, where $\sigma_2$ is the
second arrival time of $X$. Proceeding as before, we will obtain the marked point process
$\check{W}^{(1)} = (\sigma_\ell, Z_\ell)$ and the corresponding uncontrolled stock process
$\cP^{(1)}$.

We now introduce the first impulse control. Let $\{\mathcal{F}^{\check{W}^{(1)}}_t\}$ be the
filtration generated by $\check{W}^{(1)}$, and let $(\tau_1, \xi_1)$ be an
$\mathcal{F}^{\check{W}^{(1)}}$-stopping time, and an
$\mathcal{F}^{\check{W}^{(1)}}_{\tau_1}$-measurable $\Z_+$-valued random variable,
respectively, satisfying $\tau_1 \le T, 0 \le \xi_1 \le \bar{P} - \cP^{(1)}_{\tau_1-}$. The
impulse control means that we take $W = \check{W}^{(1)} 1_{[0,\tau_1)}$, $P_t = \cP^{(1)}_t
1_{[0,\tau_1)}$ and repeat the above construction on the interval $[\tau_1, T]$ starting with
the updated value $\cP^{(2)}_{\tau_1} = \cP^{(1)}_{\tau_1-}+\xi_1$.

Inductively this provides the auxiliary point processes $\check{W}^{(k)}$ together with the
impulse controls $(\tau_k, \xi_k)$, $k=1,2,\ldots$. Letting $\xi = (\tau_1, \xi_1, \tau_2,
\xi_2, \ldots)$ we finally obtain the $\xi$-controlled inventory process $P$, as well as the
$\xi$-controlled marked point process $W = \sum_k \check{W}^{(k)} 1_{[0,\tau_k)}$. By
construction, both $P$ and $\xi$ are $\F^W$-measurable. We denote by $\P^{a,\xi}$ the resulting
probability law of $(W,P)$.

Summarizing, $P$ is a piecewise-deterministic controlled process taking values in $[0,
\bar{P}]$ and evolving as in \eqref{eq:P-dynamics}; the arrival times of $W$ are those of $X$,
and the distribution of its marks $(Z_\ell)$ depends inductively on the latest
$P_{\sigma_\ell-}$, the mark $Y_\ell$ of core process $X$, and the censoring functions
$c^p_i(y,z)$. For further details of the above construction of $\P^{a,\xi}$ we refer the
reader to \cite[pp. 228-230]{davis93}. Our use of censoring functions is similar to the
construction in \cite{Arjas92}.

\subsection{Statement of the Objective Function in Terms of Sufficient Statistics.}
Let $D \triangleq \{ \vp \in [0,1]^m \colon \pi_1 + \ldots + \pi_m =1 \}$ be the space of prior
distributions of the Markov process $M$. Also, let $\s(s) = \{ \tau \colon \F-\text{stopping
time}, \tau \le s, \P-\text{a.s} \}$ denote the set of all $\F$-stopping times smaller than or
equal to $s$.

Let $\P^{\vp,a,\xi}$ denote the probability measure $\mathbb{P}^{a,\xi}$ such that the process $M$ has initial distribution $\vp$. That is,
\begin{equation}
\label{def:P-pi}
\P^{\vp,a,\xi} \{ A \} = \pi_1 \, \mathbb{P}^{a,\xi} \{A | M_0=1\} + \ldots
+ \pi_n \, \mathbb{P}^{a,\xi} \{A | M_0=n\}
\end{equation}
for all $A \in \mathcal{F}_T^{W}$. $\P^{\vp}$ can be similarly defined. In the sequel, when $\xi \equiv 0$, we will denote the corresponding probability measure by $\P^{\vp,a}$

We define the $D$-valued \emph{conditional probability process} $\vP(t) \triangleq \left(
\Pi_1(t), \ldots , \Pi_m(t) \right)$ such that
\begin{align}
\label{def:Pi-i}
 \Pi_i(t) = \P^{\vp,a,\xi} \{  M_t =i | \Fc^W_t \}, \quad \text{for $i \in E$, and $t \ge 0$}.
\end{align}
Each component of $\vP$ gives the conditional probability that the current state of $M$ is $\{
i\}$ given the information generated by $W$ until the current time $t$. 

Using $\vP$ we convert the original objective \eqref{eq:objective} into the following
$\F$-adapted formulation. Observe that given $\vP(\sigma_\ell)$, the distribution of $Y_\ell$
is $\sum_{i\in E} \Pi_i(\sigma_\ell-)\nu_i$. Therefore, starting with initial inventory $a
\in [0, \bar{P}]$ and beliefs $\vp$, the performance of a given policy $\xi \in \U(T)$ is
\begin{equation}\label{eq:policy-performance}
\begin{split}
J^{\xi}(T,\vp,a) &\triangleq \E^{\vp,a,\xi} \bigg[  \int_0^{T} \e^{- \rho t }   c(P_t) \, dt +
\sum_{\ell \in \N_+} \e^{- \rho \sigma_\ell } \sum_{i \in E} \Pi_i(\sigma_\ell)
\int_{\R_+} K \left( (y -P_{\sigma_\ell-})_+ \right) \nu_i(dy)
\\& \qquad + \sum_{k \in \N_+} \e^{-\rho \tau_k}(h \xi_k+\zeta)
\bigg].
\end{split}
\end{equation}
The first argument in $J^\xi$ is the remaining time to maturity. Also,
\eqref{eq:policy-performance} assumed that the terminal salvage value is zero, so at $T$ the
remaining inventory is completely forfeited. The inventory optimization problem is to compute
\begin{align}
\label{def:U} U (T, \vp, a) \triangleq \inf_{ \xi \in \U(T)} J^{\xi}(T,\vp,a),
\end{align}
and, if it exists, find an admissible strategy $\xi^*$ attaining this value. Without loss of
generality we will restrict the set of admissible strategies satisfying
\begin{equation}\label{eq:admissiblity}
\E^{\vp,a,\xi}\left[ \sum_{k \in \N_+}
\e^{-\rho \tau_k} (h \cdot \xi_k+\zeta)\right] <\infty
\end{equation}
otherwise infinite costs would be incurred. Note that the admisible strategies have
 finitely many
switches almost surely for any given path. The
equivalence between the ``separated'' value function in \eqref{def:U} and the original setting
of \eqref{eq:objective} is standard in Markovian impulse control problems, see e.g.\
\cite{BertsekasBookVol1}.

The following notation will be used in the subsequent analysis:
\begin{equation}\label{eq:defI}
I(t) \triangleq \int_0^{t} \sum_{i \in E} \lambda_i 1_{\{M_s=i\}}ds,
\end{equation}
and
\begin{equation}\label{eq:defblamb}
\bar{\lambda} \triangleq \max_{i \in E}\lambda_i.
\end{equation}
It is worth noting that the probability of no events for the next $u$ time units is $\P^{\vp} \{ \sigma_1
> u \} = \E^{\vp}[ \e^{-I(u)}]$.

\subsection{Sample paths of $(\vP,P)$.}\label{sec:vP}
In this section we describe the filtering procedure of the controller. In particular, Theorem
\ref{cor:pdp} explicitly shows the evolution of the processes $(\vP,P)$. This is non-trivial in
our model where censoring links the observed demand process with the chosen control strategy.
The description of paths of the conditional probability process when the control does not alter
the observations is discussed in Proposition 2.1 in \cite{LS07} and Proposition 2.1 of
\cite{BS06}. Filtering with point process observations has also been studied by
\cite{Arjas92,ElliottMalcolm04,ElliottMalcolm05,AllamDufourBertrand01}. Also, see
\cite{ElliottBook} for general description of inference in various hidden Markov models in
discrete time.

 Even
though the original order process $X$ has conditionally independent increments, this is no
longer true for the observed requests $W$ since the censoring functions depend on $P$ which in
turn depends on previous marks of $W$. Nevertheless, given $M_s = i$ for $0 \le s \le
\sigma_\ell$, the interarrival times of $W$ are i.i.d.\ $Exp(\lambda_i)$, and the distribution
of $Z_\ell$ is only a function of $P_{\sigma_{\ell-1}}$. Therefore, if we take a sample path of
$W$ where $r$-many arrivals are observed on $[0,t]$, then the likelihood of this path would be
written as $\P^{\vp,a,\xi} \{ \sigma_k \in dt_k , Z_k \in d{\vz}_k, \,  \sigma_r \leq t \,; \,
k\le r \,  | \,M_s =i , s\le t \} = $
\begin{align}
\label{path-likelihood-when-M-is-fixed}  [ \lambda_i\e^{-\lambda_i t_1 } dt_1] \cdots
[\lambda_i\e^{-\lambda_i (t_r - t_{r-1}) } dt_m]  \e^{-\lambda_i (t-t_r) } \prod_{k =1}^r
\Bigl[ \sum_y f_i (y) \mathcal{C}(y, \vz_k; P_{t_k-}) \Bigr] =\e^{-\lambda_i t } \prod_{k =1}^r
\lambda_i dt_k \cdot g_i (\vz_k ; P_{t_k-} ),
\end{align}
where $$ g_i(\vz; p) \triangleq \sum_{y \in \{1,\ldots, R\}} f_i(y) \mathcal{C}(y,\vz; p),$$
denotes the conditional likelihood of a request of type $\vz$ (which is just the sum of
conditional likelihood of all possible corresponding order sizes $y$). Note that in the case
with censoring $g_i( (z^1,z^2); p) = \sum_{n=\lfloor p \rfloor + 1}^R f_i(n) 1_{\{z^2=\Delta\}}
+ f_i(z^1) 1_{\{z^2 = 0\}}$.

More generally, we obtain
\begin{multline}
\label{path-likelihood-given-M}
1_{\{ M_t =i\}} \cdot \P^{\vp,a} \Big\{ \sigma_\ell \in dt_\ell , Z_\ell = \vz_\ell, \sigma_r \leq t; \, \ell\le r \, \Big| \, M_s, s \le t \Big\} \\
= 1_{\{ M_t =i\}} \cdot \exp{ \left( - \int_0^t \sum_{j=1}^n \lambda_j  1_{ \{ M_{s} =j \} } ds
\right) } \cdot  \prod_{\ell=1}^r \left( \sum_{j \in E}  1_{\{ M_{t_\ell} =j \}} [ \lambda_j
dt_\ell \cdot g_j (\vz_\ell ; P_{t_\ell-}) ] \right).
\end{multline}

The above observation leads to the description of the paths of the sufficient statistics $(\vP,P)$.

\begin{prop}\normalfont
\label{cor:pdp}
Let us define $\vx (t, \vp) \equiv (x_1(t, \vp), \ldots , x_m(t,
\vp))$ via
\begin{align}
\label{eq:x-i} x_i(t, \vp) \triangleq \frac{    \P^{\vp}  \{ \sigma_1 > t , M_t =i  \}  }
 {   \P^{\vp}  \{ \sigma_1 > t  \} }
=\frac{  \E^{\vp} \left[ 1_{\{ M_{t} =i\}} \cdot \e^{ - I(t)}   \right]  } {\E^{ \vp}
\left[   \e^{ - I(t)}   \right] }
 , \qquad \text{for $i \in E$}.
\end{align}

Then the paths of $(\vP,P)$ can be described by
     \begin{align}\label{eq:rel-pi-x} \left\{
     \begin{aligned}
    \vP(t)&=  \vx \left(t-\sigma_\ell,\vP({\sigma_\ell})\right),  \qquad
\quad \sigma_\ell \leq t< \sigma_{\ell+1}, \;\; \ell\in \mathbb{N} \quad \\
\Pi_i(\sigma_\ell)&= \frac{  \lambda_i f_i(Z^1_\ell) \Pi_i(\sigma_\ell-) }{ \sum_{j \in E}
\lambda_j f_j(Z^1_\ell) \Pi_j(\sigma_\ell-) } \qquad\qquad\text{if }\quad Z^2_\ell = 0;
\\ \Pi_i(\sigma_\ell)&= \frac{ \sum_{y=\lfloor P(\sigma_\ell-)\rfloor+1}^R \lambda_i f_i(y)
\Pi_i(\sigma_\ell-) }{ \sum_{y=\lfloor P(\sigma_\ell-)\rfloor+1}^R \sum_{j \in E} \lambda_j
f_j(y) \Pi_j(\sigma_\ell-) } \qquad\text{if }\quad Z^2_\ell = \Delta; \\
P(\sigma_\ell) & = P(\sigma_\ell-)-Z^1_\ell;\\
P(\tau_k) & = P(\tau_k-) + \xi_k.
\end{aligned} \right\}
\end{align}
\end{prop}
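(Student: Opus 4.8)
The plan is to compute the conditional law of $(M_t, \text{observations up to } t)$ directly from the likelihood formula \eqref{path-likelihood-given-M}, and then apply the (normalized) Bayes rule at jump times and between jumps separately. The key structural fact I would exploit is that, conditionally on the entire path of $M$, the observation process $W$ has the explicit density given in \eqref{path-likelihood-given-M}; hence $\Pi_i(t)$ is obtained by integrating that density against the prior law of $M$ and normalizing. I would organize the argument around two cases: the evolution strictly between consecutive arrivals $\sigma_\ell \le t < \sigma_{\ell+1}$, and the jump of $\vP$ across an arrival time $\sigma_\ell$. The control jumps $P(\tau_k) = P(\tau_k-) + \xi_k$ are immediate from the construction of $\S$ in Section~2.2 and require no filtering argument, since $\tau_k$ and $\xi_k$ are $\F^W$-measurable and carry no information about $M$ beyond what is already in $\Fc^W_{\tau_k-}$; the inventory updates $P(\sigma_\ell) = P(\sigma_\ell-) - Z^1_\ell$ are likewise just the dynamics \eqref{eq:P-dynamics}.

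For the between-jumps evolution, I would fix $\ell$ and condition on $\Fc^W_{\sigma_\ell}$; by the strong Markov property of $M$ and the fact that on $[\sigma_\ell, \sigma_{\ell+1})$ no new marks of $W$ are observed, the only new information on $\{\sigma_{\ell+1} > t\}$ is precisely the event of no arrival. Writing $s = t - \sigma_\ell$ and using that, conditionally on $M_{\sigma_\ell}$ being distributed as $\vP(\sigma_\ell)$, the process $M$ restarts, one gets
\begin{align*}
\Pi_i(t) = \frac{\E^{\vP(\sigma_\ell)}\!\left[ 1_{\{M_s = i\}}\, \e^{-I(s)}\right]}{\E^{\vP(\sigma_\ell)}\!\left[ \e^{-I(s)}\right]} = x_i\big(t - \sigma_\ell, \vP(\sigma_\ell)\big),
\end{align*}
which is exactly the first line of \eqref{eq:rel-pi-x} together with the definition \eqref{eq:x-i}. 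The two representations of $x_i$ in \eqref{eq:x-i} coincide because $\P^{\vp}\{\sigma_1 > t, M_t = i\} = \E^{\vp}[1_{\{M_t=i\}} \e^{-I(t)}]$, using \eqref{path-likelihood-when-M-is-fixed} with $r = 0$ and conditioning on the path of $M$.

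For the jump of $\vP$ at $\sigma_\ell$, I would apply Bayes' rule to incorporate the new mark $Z_\ell = (Z^1_\ell, Z^2_\ell)$: by \eqref{path-likelihood-given-M} the likelihood contributed by this mark, given $M_{\sigma_\ell} = j$ and given $P_{\sigma_\ell-}$, is proportional to $\lambda_j\, g_j(\vz_\ell; P_{\sigma_\ell-})$, and using the explicit form of $g_j$ in the censored case, $g_j((z^1, z^2); p) = \sum_{n = \lfloor p \rfloor + 1}^R f_j(n) 1_{\{z^2 = \Delta\}} + f_j(z^1) 1_{\{z^2 = 0\}}$. Hence
\begin{align*}
\Pi_i(\sigma_\ell) = \frac{\lambda_i\, g_i(Z_\ell; P(\sigma_\ell-))\, \Pi_i(\sigma_\ell-)}{\sum_{j \in E} \lambda_j\, g_j(Z_\ell; P(\sigma_\ell-))\, \Pi_j(\sigma_\ell-)},
\end{align*}
and splitting on $Z^2_\ell = 0$ versus $Z^2_\ell = \Delta$ yields the two middle lines of \eqref{eq:rel-pi-x}. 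Making this Bayes step rigorous — i.e.\ justifying that the conditional density of the new mark given $\Fc^W_{\sigma_\ell-}$ is the mixture $\sum_j \Pi_j(\sigma_\ell-) \lambda_j g_j(\cdot; P(\sigma_\ell-))$ and that the prior predictive normalization is as written — is the main obstacle; it is handled by combining the tower property with \eqref{path-likelihood-given-M} and the induction hypothesis that $\vP(\sigma_\ell-) = \vx(\sigma_\ell - \sigma_{\ell-1}, \vP(\sigma_{\ell-1}))$. The induction on $\ell$ then closes the argument, and the claimed piecewise-deterministic strong Markov structure of $(\vP, P)$ follows because both the flow $\vx(\cdot, \cdot)$ and the jump maps are deterministic functions of the current state and the observed mark. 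One should also check measurability and that $\vP$ stays in $D$, which is immediate since each update is a normalized nonnegative combination.
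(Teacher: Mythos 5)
Your overall strategy --- Bayes' rule driven by the explicit conditional path-likelihood \eqref{path-likelihood-given-M}, with the flow between arrivals and the jump update treated separately, and the $P$-updates read off from the construction --- is the same idea the paper uses, and all the formulas you derive (the identity $\P^{\vp}\{\sigma_1>t, M_t=i\}=\E^{\vp}[1_{\{M_t=i\}}\e^{-I(t)}]$, the restart formula $\Pi_i(t)=x_i(t-\sigma_\ell,\vP(\sigma_\ell))$, and the update $\Pi_i(\sigma_\ell)\propto \lambda_i g_i(Z_\ell;P_{\sigma_\ell-})\Pi_i(\sigma_\ell-)$ split over $Z^2_\ell\in\{0,\Delta\}$) are the correct ones.

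However, the step you yourself flag as ``the main obstacle'' is exactly where the mathematical content of the proposition lies, and saying it ``is handled by combining the tower property with \eqref{path-likelihood-given-M} and the induction hypothesis'' does not yet constitute a proof. Two assertions need justification: (i) that conditionally on $\Fc^W_{\sigma_\ell}$ the pair (future of $M$, future observations) behaves like a fresh copy started from the mixture $\vP(\sigma_\ell)$ and inventory $P_{\sigma_\ell}$, so that conditioning on $\{\sigma_{\ell+1}>t\}$ gives the ratio in \eqref{eq:x-i}; and (ii) that the conditional (predictive) law of the mark $Z_\ell$ given $\Fc^W_{\sigma_\ell-}$ is the mixture $\sum_j \Pi_j(\sigma_\ell-)\lambda_j g_j(\cdot;P_{\sigma_\ell-})$ with the stated normalization. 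Both statements involve conditioning the unobserved chain on the observation $\sigma$-algebra at the \emph{stopping times} $\sigma_\ell$, in a model where the censoring functions (and hence the law of the marks) depend on $P$, which depends on past marks and on the control; the tower property and an induction over $\ell$ do not by themselves produce these conditional laws. The paper resolves this by first proving a Kallianpur--Striebel-type representation valid for all $t$ simultaneously: Lemma~\ref{lemm:vP-explicit} expresses $\Pi_i(t)$ as the ratio \eqref{vP-explicit} of the unnormalized likelihood functionals \eqref{def:L}, and its proof goes through the monotone class theorem, conditioning on the whole path of $M$, and two applications of Fubini --- precisely the bookkeeping your sketch defers. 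Once that representation is in hand, both the interarrival flow and the jump update are obtained by purely algebraic factorizations of $L_i^{\vp,a,\xi}$ at deterministic time arguments (the semigroup computation \eqref{eq:derivation-of-vx}--\eqref{eq:semi-group} and the one-step factorization \eqref{eq:jumps-of-vP}), which sidesteps any delicate conditioning at stopping times. So your plan is sound and close in spirit to the paper's, but to close it you would need to prove a lemma of the type of Lemma~\ref{lemm:vP-explicit} (or an equivalent rigorous disintegration of the observed marked point process at the times $\sigma_\ell$); as written, the two key conditional-law identities remain assertions.
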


\begin{proof}
See Section~\ref{sec:pfpdp}. The main idea is to express $\vP_i(t)$ as a ratio of likelihood
functions, and to use \eqref{path-likelihood-given-M} to obtain explicit formulas for
likelihood of different observations conditional on the state of $M$.
\end{proof}

The deterministic paths described by $\vx$ come from a first-order ordinary differential
equation. To observe this fact first recall that the components of the vector
\begin{align}\label{def:m}
\vec{m} (t , \vp ) \equiv ( m_1 (t , \vp ), \ldots , m_m (t , \vp ) ) \triangleq \Bigl( \,
\E^{\vp,a} \left[ 1_{\{ M_{t} =1\}} \cdot \e^{ - I(t)}   \right]  , \ldots ,
  \E^{\vp,a} \left[ 1_{\{ M_{t} =m\}} \cdot \e^{ - I(t)}   \right] \, \Bigr)
\end{align}
solve $ d m_i (t, \vp ) / dt  = - \lambda_i m_i (t,\vp ) + \sum_{j \in E}  m_j (t,\vp) \cdot
q_{j, i}$ (see e.g. \cite{DarrochMorris,Neuts,KarlinTaylor2}). Now using (\ref{eq:x-i})  and
applying the chain rule we obtain
\begin{align}
\label{eq:vx-dyn}
\frac{d x_i (t,\vp) }{dt} = \left( \sum_{j\in E} q_{j,i}  x_j (t,\vp) - \lambda_i  x_i (t,\vp)  + x_i (t,\vp)   \sum_{j \in E} \lambda_{j}  x_j (t,\vp)  \right).
\end{align}

Note that since $(\vP,P)$ is a piecewise deterministic Markov process by the last proposition,
the results of \cite{davis93} imply that this pair is a strong Markov process.

\section{Characterization and Continuity of the Value Function}
\label{sec:sequential}

A standard approach (see e.g.\ \cite{BertsekasBookVol1}) to solving stochastic control problems
makes use of the dynamic programming (DP) principle. Heuristically, the DP implies that to
implement an optimal policy, the manager should continuously compare the intervention value,
i.e.\ the maximum value that can be extracted by immediately ordering the most beneficial
amount of inventory, with the continuation value, i.e.\ maximum value that can be extracted by
doing nothing for the time being. In continuous time this leads to a recursive equation that
couples $U(t,\vp,a)$ with $U(t-dt,\vp,b)$ for $b \ge a$. Such a coupled equation could then be
solved inductively starting with the known value of $U(0, \vp, a)$.

In this section we show that the above intuition is correct and that $U$ satisfies a coupled
optimal stopping problem. More precisely, we show in Theorem \ref{prop:mnprp} that it is the
unique fixed point of a functional operator $\G$ that maps functions to value functions of
optimal stopping problems. This gives a direct and self-contained proof of the DP for our
problem. We also show that the sequence of functions that we obtain by iterating $\G$ starting
at the value of no-action converges to the value function uniformly. Since $\G$ maps continuous
functions to continuous functions and the convergence is uniform, we obtain that the value
function $U$ is jointly continuous with respect to all of its variables. Continuity of the
value function leads to a direct characterization of an optimal strategy in Proposition
\ref{prop:opt-strat}. The analysis of this section parallels the general (infinite horizon)
framework of impulse control of piecewise deterministic processes (pdp) developed by
\cite{CostaDavis89}. We should also point out that Theorem~\ref{prop:mnprp} is used to
establish an alternative characterization of the value function, see Proposition~\ref{eq:hat-wn},
which is more amenable to computing the value function.

First, we will analyze the problem with no intervention. This analysis will facilitate the
proofs of the main results in the subsequent subsection.
\subsection{Analysis of the Problem with no Intervention}
Let $U_0$ be
the value of no-action, i.e.,
\begin{equation}\label{def:U-0}
\begin{split}
U_0(T,\vp,a)&=\E^{\vp,a} \left[   \int_0^{T} \e^{- \rho t }  \, c(P_t) \, dt + \sum_{k :
\sigma_k \le T} \e^{-\rho \sigma_k }K( (Y_k - P_{\sigma_k-})_+)\right]
\\&=\E^{\vp,a} \left[   \int_0^{T} \e^{- \rho t }  \, c(P_t) \, dt + \sum_{k :
\sigma_k \le T} \e^{-\rho \sigma_k }\sum_{i \in E}\Pi^{(i)}_{\sigma_k}\int_{\R_+}K( (y - P_{\sigma_k-})_+)\nu_i(dy)\right].
\end{split}
\end{equation}
We will prove the continuity of  $U_0$ in the next proposition; this property will become crucial in the proof of the main result of the next section. But before let us present an auxiliary lemma which will help us prove this proposition.

\begin{lemm}\label{lem:fst-est}
For all $n \geq 2$, we have the uniform bound
\begin{equation}
 \P^{\vp,a}\{T>\sigma_n\} \leq \frac{\bar{\lambda}T}{n-1}.
\end{equation}
\end{lemm}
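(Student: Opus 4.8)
The plan is to translate the event $\{T>\sigma_n\}$ into a tail event for the arrival counting process and then combine the Markov-modulated Poisson structure of the order flow with Markov's inequality. The key structural fact, which I would invoke from \eqref{def:X}, is that conditionally on the $\sigma$-field $\mathcal{F}^M_T$ generated by the whole path $(M_s)_{0\le s\le T}$ of the core process, $X$ has independent increments and its arrival times form an inhomogeneous Poisson process of rate $\lambda_{M_s}$; in particular, conditionally on $\mathcal{F}^M_T$, the number of arrivals of $X$ in $[0,T]$ is Poisson distributed with mean $I(T)=\int_0^T\sum_{i\in E}\lambda_i 1_{\{M_s=i\}}\,ds$ (cf.\ \eqref{eq:defI}), a fact that can also be read off \eqref{path-likelihood-given-M} by summing over all possible mark values.

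First I would observe that under $\P^{\vp,a}$ there is no intervention ($\xi\equiv0$), so by the construction in Section~\ref{sec:probStat} we have $W=\check{W}^{(1)}$, whose arrival times are precisely the arrivals $\sigma_1,\sigma_2,\ldots$ of $X$; hence $\{T>\sigma_n\}=\{N(T)\ge n\}$, where $N(T)$ denotes the number of these arrivals in $[0,T]$. Conditioning on $\mathcal{F}^M_T$ and using the conditional Poisson law together with the pathwise bound $I(T)\le\bar{\lambda}T$ (immediate from \eqref{eq:defblamb}), we get $\E^{\vp,a}[N(T)\mid\mathcal{F}^M_T]=I(T)\le\bar{\lambda}T$, and therefore $\E^{\vp,a}[N(T)]\le\bar{\lambda}T<\infty$. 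Markov's inequality then gives $\P^{\vp,a}\{T>\sigma_n\}=\P^{\vp,a}\{N(T)\ge n\}\le\E^{\vp,a}[N(T)]/n\le\bar{\lambda}T/n\le\bar{\lambda}T/(n-1)$, where the final inequality uses $n\ge2$. (To obtain the constant $1/(n-1)$ with no slack at all one may instead discard the first arrival: applying the strong Markov property at $\sigma_1$ and running the same conditional-Poisson estimate on $N(\sigma_1+T)-N(\sigma_1)$ yields $\P^{\vp,a}\{T>\sigma_n\}\le\P^{\vp,a}\{N(\sigma_1+T)-N(\sigma_1)\ge n-1\}\le\bar{\lambda}T/(n-1)$, which is where $n\ge2$ is genuinely needed so that the denominator is positive.)

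I do not anticipate a real obstacle. The two points that need a line of justification are: (i) that in the uncontrolled measure $\P^{\vp,a}$ the arrivals of $W$ coincide with those of $X$ --- immediate from the construction of $W$, since with $\xi\equiv0$ no impulse is ever exercised and $W=\check{W}^{(1)}$; and (ii) the conditional Poisson law of $N(T)$ given $\mathcal{F}^M_T$, which is just the defining property of a Markov-modulated Poisson process (equivalently \eqref{path-likelihood-given-M}). Everything else is the one-line Markov bound; note that the argument in fact yields the marginally stronger estimate $\bar{\lambda}T/n$.
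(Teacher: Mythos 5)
Your argument is correct, but it is not the route the paper takes. You reduce $\{T>\sigma_n\}$ to the tail event $\{N(T)\ge n\}$ for the arrival count, use the fact that conditionally on the path of $M$ the count $N(T)$ is Poisson with mean $I(T)\le\bar{\lambda}T$, and finish with Markov's inequality; this is elementary and in fact yields the marginally sharper bound $\bar{\lambda}T/n$, of which the stated $\bar{\lambda}T/(n-1)$ is an immediate consequence (your parenthetical "discard the first arrival" variant is unnecessary for that purpose, since $\bar{\lambda}T/n\le\bar{\lambda}T/(n-1)$ already). The paper instead works with the arrival time $\sigma_n$ itself: Step 1 bounds the Laplace transform, $\E^{\vp,a}[\e^{-u\sigma_n}]\le\bigl(\bar{\lambda}/(\bar{\lambda}+u)\bigr)^n$, by conditioning on $M$ and dominating each interarrival time by an $Exp(\bar{\lambda})$ variable, and Step 2 converts this into the claim via $\P^{\vp,a}\{T>\sigma_n\}\le\E^{\vp,a}[T/\sigma_n]$, the identity $1/\sigma_n=\int_0^\infty\e^{-u\sigma_n}\,du$ and Fubini, which is precisely where the constant $1/(n-1)$ and the restriction $n\ge2$ come from. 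Both proofs rest on the same structural fact (conditionally on $M$, arrivals form an inhomogeneous Poisson stream with rate at most $\bar{\lambda}$); yours exploits it through the first moment of the counting variable, the paper's through the Laplace transform of the $n$-th arrival time. Your version is shorter and gives a slightly better constant; the paper's intermediate estimate \eqref{eq:laplacetranssigma} is a reusable bound on $\sigma_n$ in its own right, but for this lemma nothing is lost by your approach, and the conditional Poisson law you invoke is the same fact the paper uses elsewhere (e.g.\ $\E^{\vp,a}[N(T)^2]\le\bar{\lambda}T+(\bar{\lambda}T)^2$ in the proof of Proposition~\ref{prop:contUo}).
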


\begin{proof}

\textbf{Step 1}.First we will show that
\begin{equation}\label{eq:laplacetranssigma}
\E^{\vp,a}\left[\e^{-u \sigma_n}\right] \leq
\left(\frac{\bar{\lambda}}{\bar{\lambda}+u}\right)^n.
\end{equation}
The conditional probability of the first jump satisfies $\P^{\vp}\{\sigma_1>t|M\}=\e^{-I(t)}$.
Therefore,
\begin{equation}\label{eq:fst-est}
\begin{split}
\E^{\vp,a}\left[\e^{-u \sigma_1}|M\right]=\E^{\vp,a}\left[\int_{\sigma_1}^{\infty}u\e^{-u t}\,dt
\Big| \, M\right] &=\int_0^{\infty}\P^{\vp}\{\sigma_1 \leq t | M \} u\e^{-ut}\,dt
\\& = \int_0^{\infty}\left[1-\e^{-I(t)}\right] u\e^{-ut}\, dt \\
&\leq \int_0^{\infty}\left[1-\e^{-\bar{\lambda} t}\right]u\e^{-u t}dt =
\frac{\bar{\lambda}}{u+\bar{\lambda}}.
\end{split}
\end{equation}
Since the observed process
$X$ has independent increments given $M$, it readily follows that $\E^{\vp,a}\left[\e^{-u
\sigma_n}|M\right] \leq \bar{\lambda}^n/(\bar{\lambda}+u)^n$, which immediately implies (\ref{eq:laplacetranssigma}).

\textbf{Step 2.} Note that
\begin{equation}
 \P^{\vp,a}\{T>\sigma_n\} \leq  \E^{\vp}\left[1_{\{T>\sigma_n\}}(T/\sigma_n)\right] \leq \E^{\vp}\left[T/\sigma_n\right].
\end{equation}
Since $1/\sigma_n=\int_0^{\infty}\e^{-\sigma_n u} du$, an application of Fubini's theorem
together with \eqref{eq:laplacetranssigma}
\begin{equation}
\E^{\vp} \left[\frac{1}{\sigma_n}\right] \leq \int_0^{\infty}\left(\frac{\bar{\lambda}}{\bar{\lambda}+u}\right)^n du =\frac{\bar{\lambda}}{n-1}, \quad n \geq 2,
\end{equation}
implies the result.
\end{proof}

Define the jump operators $S_i$ through their action on a test function $w$ by
\begin{multline}
\label{def:S} S_i w(t, \vp, a) \triangleq \int_{\R} \Big\{ w \left(t, \left(\, \frac{ \lambda_1
f_1(y) \pi_1 }{ \sum_{j \in E} \lambda_j f_j(y) \pi_j }, \ldots, \frac{  \lambda_m
f_m(y) \pi_m }{ \sum_{j \in E} \lambda_j f_j(y) \pi_j } \right), (a-y)_+\right) \\
 + K( (y-a)_+) \Bigr\} \nu_i(dy), \quad \text{for $i \in E$.}
\end{multline}
The motivation for $S_i$ comes from the dynamics of $\vP$ in Proposition \ref{cor:pdp} and
studying expected costs if an immediate demand order (of size $y$) arrives.

\begin{prop}\label{prop:contUo}
$U_0$ is a continuous function.
\end{prop}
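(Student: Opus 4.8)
The plan is to decompose $U_0(T,\vp,a)$ into its three contributions — the discounted running storage cost, the discounted stock-out penalties, and (implicitly) the tail beyond finitely many jumps — and to establish continuity in $(T,\vp,a)$ jointly by controlling each piece. The key structural fact is that with no intervention the process $P$ is purely decreasing from $a$, driven only by the marks $Z^1_\ell$, and $(\vP,P)$ is the piecewise-deterministic Markov process described in Proposition~\ref{cor:pdp}; in particular, between jumps $\vP$ follows the smooth flow $\vx(\cdot,\cdot)$ from \eqref{eq:vx-dyn}, which depends continuously (indeed smoothly) on its initial condition $\vp$ and on elapsed time. First I would write, conditioning on the core path $M$ and using \eqref{path-likelihood-given-M}, an explicit series representation
\[
U_0(T,\vp,a) = \sum_{r=0}^{\infty} U_0^{(r)}(T,\vp,a),
\]
where $U_0^{(r)}$ collects the cost contributions on the event $\{\sigma_r \le T < \sigma_{r+1}\}$ (or, more conveniently, the incremental cost accrued on $[\sigma_r, \sigma_{r+1}\wedge T]$ together with the penalty $K((Y_r-P_{\sigma_r-})_+)$). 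Each $U_0^{(r)}$ is a finite-dimensional integral over the jump times $t_1<\dots<t_r$ in $[0,T]$ and the marks, with integrand built from the exponential densities $\lambda_i\e^{-\lambda_i(\cdot)}$, the transition weights, the continuous flow $\vx$, the continuous cost $c$, and the increasing penalty $K$ evaluated at $(y-P_{t_r-})_+$. Because $c$ is continuous, $\vx$ is continuous in $(t,\vp)$, and $P_{t_r-}$ is a continuous (piecewise-linear, decreasing) function of $a$ and the marks, dominated convergence applied on the fixed-or-shrinking simplex $\{0\le t_1\le\dots\le t_r\le T\}$ shows each $U_0^{(r)}$ is continuous in $(T,\vp,a)$; the only subtlety is continuity in $T$ at the upper endpoint of the integration simplex, which is handled by the continuity of the integrand and the boundedness of $c$ on $[0,\bar P]$.

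The main obstacle — and the reason Lemma~\ref{lem:fst-est} was proved first — is to upgrade pointwise continuity of each term to continuity of the sum via a uniform tail bound. For this I would show that the tail $\sum_{r\ge n} U_0^{(r)}(T,\vp,a)$ is bounded, uniformly in $(\vp,a)$ and locally uniformly in $T$, by a quantity tending to $0$ as $n\to\infty$. The running-cost part of the tail is at most $\bar c \cdot T \cdot \P^{\vp,a}\{\sigma_n \le T\} \le \bar c\,T\cdot \bar\lambda T/(n-1)$ by Lemma~\ref{lem:fst-est}, where $\bar c = \max_{[0,\bar P]} c$. The penalty part requires a little more care because $K((Y_\ell - P_{\sigma_\ell-})_+)$ involves the (bounded, $\le R$) mark $Y_\ell$, so $K((Y_\ell-P_{\sigma_\ell-})_+)\le K(R) =: \bar K$; hence the total expected penalty from jumps numbered $\ge n$ is at most $\bar K\,\E^{\vp,a}[\#\{\ell \ge n: \sigma_\ell \le T\}]$, and this expectation is again controlled by iterating the Laplace-transform estimate \eqref{eq:laplacetranssigma} (or directly: $\E^{\vp}[\#\{\ell: \sigma_\ell\le T\}\,1_{\{\sigma_n\le T\}}]$ is summable in the same way the bound in Step~2 of Lemma~\ref{lem:fst-est} was obtained). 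Thus the tail is $O(T^2/n)$ uniformly over the compact parameter range, so the series converges uniformly on compacts.

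Finally, a uniformly convergent series of continuous functions is continuous, which gives joint continuity of $U_0$ on $(0,\infty)\times D\times[0,\bar P]$; continuity down to $T=0$ follows since $U_0(0,\vp,a)=0$ and $|U_0(T,\vp,a)|\le \bar c\,T + \bar K\,\E^{\vp,a}[\#\{\ell:\sigma_\ell\le T\}] \le \bar c\,T + \bar K\,\bar\lambda T \to 0$. I expect the delicate points to be (i) verifying the dominated-convergence hypotheses uniformly as the integration simplex moves with $T$, and (ii) writing the per-jump decomposition cleanly enough that the dependence on $a$ through the successive values $P_{\sigma_\ell-}$ — which itself depends on all earlier marks — is manifestly continuous; both are routine once the representation \eqref{path-likelihood-given-M} and Proposition~\ref{cor:pdp} are in hand, with Lemma~\ref{lem:fst-est} supplying the essential tail control.
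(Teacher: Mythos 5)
Your proposal is correct in substance and follows the same overall architecture as the paper's proof: truncate the cost at the $n$-th demand arrival, prove continuity of each truncation, and upgrade to continuity of $U_0$ via a uniform (in $(\vp,a)$, locally uniform in $T$) tail estimate based on Lemma~\ref{lem:fst-est}. Indeed, your partial sums $\sum_{r<n}U_0^{(r)}$ coincide with the paper's functions $k_n$ in \eqref{eq:alt-rep-kn}. The difference lies in how continuity of the truncations is obtained: the paper defines the first-jump operator $\Upsilon$ in \eqref{eq:defn-I}, observes from its closed-form single-integral expression (involving $m_i$, the flow $\vx$, and the jump operators $S_i$) that $\Upsilon$ maps continuous functions to continuous functions, and gets $k_n=\Upsilon^n 0$ continuous by induction; you instead write each per-jump-count term as an explicit multidimensional likelihood integral over the simplex of jump times via \eqref{path-likelihood-given-M} and invoke dominated convergence. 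Both work, but the operator route avoids precisely the two ``delicate points'' you flag (the moving simplex and the dependence of $P_{\sigma_\ell-}$ on all earlier marks), since the one-step kernel already absorbs them. One genuine looseness in your tail estimate: bounding the penalty tail by $K(R)\,\E^{\vp,a}[\#\{\ell\ge n:\sigma_\ell\le T\}]$ and then applying Lemma~\ref{lem:fst-est} term by term fails, because $\sum_{\ell\ge n}\bar\lambda T/(\ell-1)$ diverges; you need either the paper's device (Cauchy--Schwarz, bounding $\E^{\vp,a}[1_{\{T>\sigma_n\}}N(T)]$ by $\sqrt{\P^{\vp,a}\{T>\sigma_n\}}\sqrt{\E^{\vp,a}[N(T)^2]}$ with $\E^{\vp,a}[N(T)^2]\le\bar\lambda T+(\bar\lambda T)^2$) or a Chernoff-type use of \eqref{eq:laplacetranssigma} giving $\P^{\vp,a}\{\sigma_\ell\le T\}\le \e^{\bar\lambda T}2^{-\ell}$, which your remark about ``iterating the Laplace-transform estimate'' gestures at but does not carry out; with either fix the uniform convergence, and hence continuity, goes through (your stated rate $O(T^2/n)$ should be adjusted accordingly).
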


\begin{proof}
Let us define a functional operator $\Upsilon$ through its action on a test function $w$ by
\begin{equation}\label{eq:defn-I}
\begin{split}
\Upsilon w(T,\vp,a)&= \E^{\vp,a}\bigg[\int_0^{\sigma_1\wedge T}\e^{-\rho t}c(p(t,a))dt +1_{\{\sigma_1
\leq T\}}\bigg( \e^{-\rho \sigma_1 }\sum_{i \in E}\Pi^{(i)}_{\sigma_1}\int_{\R_+}K( (y - P_{\sigma_1-})_+)\nu_i(dy)
\\& \qquad +w(T-\sigma_1,\vP_{\sigma_1},P_{\sigma_1})\bigg)\bigg]
\\&= \int_0^{T}\e^{-\rho u} \sum_{i \in E} m_{i}(u,\vp) \cdot \left[c(p(u,a))+\lambda_i \cdot S_i w(T-u, \vec{x}(u,\vp),
p(u,a))\right]du.
\end{split}
\end{equation}
The operator $\Upsilon$ is motivated by studying expected costs up to and including the first
demand order assuming no-action on the part of the manager.

It is clear from the last line of \eqref{eq:defn-I} that $\Upsilon$ maps continuous functions
to continuous functions. As a result of the strong Markov property of $(\vP,P)$ we observe that
$U_0$ is a fixed point of $\Upsilon$, and if we define
\begin{equation}\label{def:k-n}
k_{n+1}(T,\vp,a)= \Upsilon k_n(T,\vp,a), \quad k_0(T,\vp,a)=0, \quad T \in \R_+, \vp \in D, a \in \A
\end{equation}
then $k_n \nearrow U_0$ (also see Proposition 1 in \cite{CostaDavis89}).  To complete our proof
we will show that $k_n$ converges to $U_0$ uniformly (locally in the $T$ variable); since all the elements of the sequence
$(k_n)_{n \in \N}$ are continuous the result then follows.

Using the strong Markov property we can write $k_n$ as
\begin{equation}\label{eq:alt-rep-kn}
k_n(T,\vp,a)= \E^{\vp,a}\left[\int_0^{\sigma_n\wedge T}\e^{-\rho t}c(P_t)\,dt+\sum_{k :
\sigma_k \le T \wedge \sigma_n} \e^{-\rho \sigma_k }K( (Y_k - P_{\sigma_k-})_+)\right],
\end{equation}
from which it follows that \begin{equation}\label{eq:kn-U0}
\begin{split}
|U_0(T,\vp,a)&-k_n(T,\vp,a)| \leq
\E^{\vp,a}\left[1_{\{T>\sigma_n\}}\left(\int_{\sigma_n}^{T}\e^{-\rho t} c(\bar{P})dt+ K(R)
\sum_{k \geq n}\e^{-\rho \sigma_k}\right)\right]
\\& \leq \E^{\vp,a}\left[1_{\{T>\sigma_n\}}\left(c(\bar{P})T+K(R) (N(T)-N(\sigma_n)) \right)\right]
\\ &\leq  \P^{\vp,a}\{T>\sigma_n\} c(\bar{P})T+ K(R)\E^{\vp,a}\left[1_{\{T>\sigma_n\}} N(T) \right]
\\ &\leq   \P^{\vp,a}\{T>\sigma_n\} c(\bar{P})T+K(R) \sqrt{ \P^{\vp,a}\{T>\sigma_n\} } \sqrt{\E^{\vp,a}[N(T)^2]},
\end{split}
\end{equation}
where we used the Cauchy-Schwarz inequality to obtain the last inequality. Using
Lemma~\ref{lem:fst-est}, and $\E^{\vp,a}[N(T)^2] \leq \bar{\lambda} T+(\bar{\lambda} T)^2$ we
obtain
\begin{equation}\label{eq:unconkn}
\begin{split}
|U_0(T,\vp,a)-k_n(T,\vp,a)| &\leq c(\bar{P})T
\frac{\bar{\lambda}}{n-1}+K(R)\sqrt{\bar{\lambda}T+(\bar{\lambda}
T)^2}\sqrt{\frac{\bar{\lambda}T}{n-1}}
\\&\leq c(\bar{P})\bar{T} \frac{\bar{\lambda}}{n-1}+K(R)\sqrt{\bar{\lambda}\,\,\bar{T}+\bar{\lambda}^2 \bar{T}^2}\sqrt{\frac{\bar{\lambda}T}{n-1}},
\end{split}
\end{equation}
for any $T \in [0,\bar{T}]$. Letting $n \rightarrow \infty$ we see that $(k_n)_{n \in
\mathbb{N}}$ converges to $U_0$ uniformly on $[0,\bar{T}]$. Since $\bar{T}$ is arbitrary, the
result follows.
\end{proof}

\subsection{Dynamic Programming Principle and an Optimal Control}\label{sec:optimal-stragegy}
We are now in position to establish the DP for $U$ and also characterize an optimal strategy.
In our problem the DP takes the form of a coupled optimal stopping problem of Theorem
\ref{prop:mnprp}.

Let us introduce a functional operator $\M$ whose action on a test function $w$ is
\begin{align}\label{eq:intervention-op} \M w(T,\vp,a) \triangleq \min_{b : a \leq  b \leq  \bar{P}}
 \Bigl\{w(T, \vp, b) + h(b-a)+\zeta \Bigr\}.
\end{align}
The operator $\M$ is called the \emph{intervention} operator and denotes the minimum cost that
can be achieved if an immediate supply of size $b-a$ is made.
\begin{lemm}\label{lem:mcont}
The operator $\mathcal{M}$ maps continuous functions to continuous functions.
\end{lemm}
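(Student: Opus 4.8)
The plan is to use that $\M w(T,\vp,a)$ is, for each fixed $(T,\vp,a)$, the minimum of the jointly continuous function $(T,\vp,a,b)\mapsto w(T,\vp,b)+h(b-a)+\zeta$ over the compact interval $b\in[a,\bar P]$; since $w$ is continuous and $[a,\bar P]$ is compact, this minimum is attained. The only subtlety is that the set over which one minimizes moves with $a$, so the textbook statement that a parametric minimum over a \emph{fixed} compact set is continuous does not apply verbatim. I would handle this either by the substitution $b=a+t(\bar P-a)$, $t\in[0,1]$, which rewrites $\M w(T,\vp,a)=\min_{t\in[0,1]}\bigl[w(T,\vp,a+t(\bar P-a))+ht(\bar P-a)+\zeta\bigr]$ as a minimum of a jointly continuous function over the fixed compact set $[0,1]$ (and then invoke the standard result), or, more directly, by the two-sided sequential argument sketched next; I would present the direct argument.

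Fix a sequence $(T_n,\vp_n,a_n)$ in the domain of $w$ converging to $(T,\vp,a)$. For the $\limsup$ inequality, let $b^{*}\in[a,\bar P]$ attain $\M w(T,\vp,a)$ and test the point $b_n\triangleq\max(b^{*},a_n)$, which is feasible for $(T_n,\vp_n,a_n)$ since $a_n\le b_n\le\bar P$; because $b^{*}\ge a$ we have $b_n\to b^{*}$, so continuity of $w$ gives
\[
\limsup_{n\to\infty}\M w(T_n,\vp_n,a_n)\le\lim_{n\to\infty}\bigl[w(T_n,\vp_n,b_n)+h(b_n-a_n)+\zeta\bigr]=w(T,\vp,b^{*})+h(b^{*}-a)+\zeta=\M w(T,\vp,a).
\]

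For the $\liminf$ inequality, let $\beta_n\in[a_n,\bar P]$ attain $\M w(T_n,\vp_n,a_n)$. Passing to a subsequence along which $\M w(T_n,\vp_n,a_n)$ tends to its $\liminf$ and, using compactness of $[0,\bar P]$, along which $\beta_n\to\beta$, the bound $\beta_n\ge a_n$ forces $\beta\ge a$, hence $\beta\in[a,\bar P]$ is feasible for the limiting problem; continuity of $w$ then yields $\liminf_{n\to\infty}\M w(T_n,\vp_n,a_n)=w(T,\vp,\beta)+h(\beta-a)+\zeta\ge\M w(T,\vp,a)$. Combining the two inequalities gives $\M w(T_n,\vp_n,a_n)\to\M w(T,\vp,a)$, which is the claim.

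I do not expect any genuine obstacle: the entire content is the bookkeeping around the moving constraint set, which the choice $b_n=\max(b^{*},a_n)$ (convergent to $b^{*}$ exactly because $b^{*}\ge a$) and the observation that limits of feasible points remain feasible take care of. One should, however, note at the outset that $\M w$ has the same domain as $w$ — in particular $a$ ranges over $[0,\bar P]$ — so the minimization interval $[a,\bar P]$ is always nonempty, and that the minimum genuinely exists by compactness, since the definition of $\M w$ is stated with $\min$ rather than $\inf$.
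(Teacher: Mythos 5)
Your proof is correct. The paper disposes of this lemma in one line by observing that the constraint correspondence $a \mapsto [a,\bar{P}]$ is continuous (compact-valued) and invoking a standard parametric-minimum result (Proposition D.3 of Hern\'andez-Lerma and Lasserre), i.e.\ a Berge-maximum-theorem type statement. Your argument is a self-contained, elementary verification of exactly that machinery in this special case: the $\limsup$ step with the test point $b_n=\max(b^{*},a_n)$ is precisely a hands-on proof of lower hemicontinuity of $a\mapsto[a,\bar P]$ (every feasible point for the limit problem is approximated by feasible points for the perturbed problems), and the $\liminf$ step, where limits of minimizers $\beta_n\in[a_n,\bar P]$ remain feasible by compactness of $[0,\bar P]$, is upper hemicontinuity/closedness with compact values; joint continuity of $w$ is used correctly in both passages to the limit, and you rightly note that the minimum is attained so the definition with $\min$ makes sense. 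The substitution $b=a+t(\bar P-a)$, $t\in[0,1]$, is another clean route reducing to a minimum over a fixed compact set. What each approach buys: the paper's citation is shorter and emphasizes that only continuity of the constraint correspondence matters (so the same argument covers, e.g., the buy/sell variant where the minimization is over $[0,\bar P]$), while yours avoids an external reference and makes the role of the moving constraint completely explicit.
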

\begin{proof}
This result follows since the set valued map $a \rightarrow [a,\bar{P}]$ is continuous (see Proposition D.3 of \cite{MR995463}).
\end{proof}

We will denote the smallest supply
order the minimum in \eqref{eq:intervention-op} by
\begin{align}\label{eq:optimal-action}
d_{\M w}(T,\vp, a) \triangleq \min \Bigl\{ b \in [a, \bar{P}]:\, w(T,\vp, b)
+ h(b-a) +\zeta= \M w(T,\vp,a) \Bigr\}.
\end{align}

Let us define a functional operator $\G$ by its action on a test function $V$ as
\begin{align}\label{def:G}
\G V(T, \vp,a) & =\inf_{\tau \in \s(T)} \E^{\vp,a} \Bigl[ \int_0^\tau \e^{-\rho s} c( P_s) \, ds + \sum_\ell \sum_{i}\e^{- \rho \sigma_\ell}1_{\{ \sigma_\ell \leq \tau \}}
\vP^{(i)}_{\sigma_l} \int_{\R_+}K( (y- P_{\sigma_\ell}-)_+)\nu_i(dy)  \\  \notag & \qquad + \e^{-\rho \tau} \M V(T-\tau,
\vP_{\tau}, P_{\tau}) \Bigr],
\end{align}
for $\,T \in \R_+,\, \vp \in D$, and $a \in \A$. The above definition is motivated by studying
minimal expected costs incurred by the manager until the first supply order time $\tau$.

\begin{lemm}\label{lemma:gmapsconttocont}
The operator $\G$ maps continuous functions to continuous functions.
\end{lemm}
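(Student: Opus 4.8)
The plan is to mirror the proof of Proposition~\ref{prop:contUo}: I would realize $\G V$ as the limit of the iterates of a one-cycle operator that visibly preserves continuity, the only genuinely new ingredient being the optimal stopping decision on the first interarrival interval. Concretely, I would introduce an operator $\mathcal{L}$ which, acting on a bounded continuous test function $w$, returns the value obtained by choosing a stopping time on $[0,\sigma_1]$ optimally: if no demand has arrived by the chosen (necessarily deterministic) time $u$ one stops and collects the terminal value $\M V$, whereas if a demand arrives at $\sigma_1\le u$ one pays the stock-out cost and continues with value $w$. Conditioning on $(\sigma_1,M_{\sigma_1})$ via \eqref{path-likelihood-given-M}, exactly as in the derivation of \eqref{eq:defn-I} and recalling \eqref{def:m} and \eqref{def:S}, this should take the explicit form $\mathcal{L}w(T,\vp,a)=\inf_{u\in[0,T]}\Psi_w(u,T,\vp,a)$ with
\[
\Psi_w(u,T,\vp,a)\triangleq\int_0^u\e^{-\rho s}\sum_{i\in E}m_i(s,\vp)\bigl[c(p(s,a))+\lambda_i S_i w(T-s,\vx(s,\vp),p(s,a))\bigr]ds+\e^{-\rho u}\Bigl(\sum_{i\in E}m_i(u,\vp)\Bigr)\M V(T-u,\vx(u,\vp),p(u,a)).
\]

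I would then proceed in three steps. First, I would check that $\mathcal{L}$ maps continuous functions to continuous functions: $\vec{m}(\cdot,\cdot)$ is jointly continuous since it solves the linear ODE recorded after \eqref{def:m}, $\sum_j m_j(s,\vp)=\E^{\vp}[\e^{-I(s)}]\ge\e^{-\bar{\lambda}s}>0$ so $\vx=\vec{m}/\sum_j m_j$ is jointly continuous with $\vx(u,\vp)$ having strictly positive components for $u>0$, and $p(\cdot,\cdot)$, $S_i w$ (as in the $\Upsilon$ computation) and $\M V$ (Lemma~\ref{lem:mcont}) are continuous; hence $\Psi_w$ is jointly continuous on $\{(u,T,\vp,a):0\le u\le T\}$. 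Since the constraint correspondence $T\mapsto[0,T]$ is compact-valued and continuous, a maximum-theorem argument—of the type already used in Lemma~\ref{lem:mcont} via Proposition~D.3 of \cite{MR995463}—would give continuity of $(T,\vp,a)\mapsto\inf_{u\in[0,T]}\Psi_w$ (the infimum being attained). Second, using the strong Markov property of the piecewise-deterministic process $(\vP,P)$ (Proposition~\ref{cor:pdp} and \cite{davis93}) together with the fact that every $\F^W$-stopping time agrees with a deterministic time up to the first jump $\sigma_1$, the dynamic programming decomposition over the first jump would give the fixed-point identity $\G V=\mathcal{L}(\G V)$. Third, with $V_0\triangleq\M V$ and $V_{n+1}\triangleq\mathcal{L}V_n$, each $V_n$ is continuous by Lemma~\ref{lem:mcont} and Step~1, $V_n$ is nonincreasing (take $u=0$) with $\G V\le V_n$ (as $V_n$ is the value of the same problem restricted to strategies committing to stop within the first $n$ interarrival cycles), and the gap $V_n-\G V$ is supported on $\{T>\sigma_n\}$ and bounded there by $c(\bar{P})T+K(R)(N(T)-N(\sigma_n))+2\sup_{[0,\bar{T}]\times D\times\A}|\M V|$—a finite bound since $V$ is continuous; repeating the Cauchy--Schwarz and Fubini estimate \eqref{eq:kn-U0}--\eqref{eq:unconkn} with Lemma~\ref{lem:fst-est} and $\E^{\vp,a}[N(T)^2]\le\bar{\lambda}T+(\bar{\lambda}T)^2$ would then yield $|\G V-V_n|\le C(\bar{T})/\sqrt{n-1}$ on $[0,\bar{T}]$. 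Hence $\G V$ is a locally uniform limit of continuous functions and therefore continuous.

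The hard part will be the combination of the first two steps—arguing that passing to the infimum over $\F^W$-stopping times does not destroy continuity. What makes it tractable is the piecewise-deterministic structure of $(\vP,P)$: it collapses the first-cycle stopping decision to a single real parameter $u$ ranging over the continuously-varying compact set $[0,T]$ (so a maximum-theorem argument applies), and, through the strong Markov property, it allows the fixed-point iteration $V_n=\mathcal{L}^n(\M V)$ whose uniform convergence is controlled by the interarrival estimate of Lemma~\ref{lem:fst-est}, exactly as $\Upsilon$ was used for $U_0$. A minor point to watch is the continuity of $S_i w$ at the boundary of $D$, where the Bayesian update in \eqref{def:S} involves a ratio with a possibly small denominator; this is handled as in Proposition~\ref{prop:contUo}, since $S_i w$ is only ever evaluated at the interior points $\vx(u,\vp)$, $u>0$.
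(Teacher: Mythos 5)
Your argument is essentially correct, but it takes a genuinely different route from the paper. The paper disposes of this lemma in one line: since $\M V$ is continuous by Lemma~\ref{lem:mcont}, it invokes Corollary 3.1 of \cite{LS07} (see also Remark 3.4 of \cite{BS06}), an established continuity result for value functions of optimal stopping problems driven by this type of piecewise-deterministic filtering dynamics. You instead re-derive that result in the present setting: you freeze the stopping payoff at $\M V$, introduce the one-jump operator $\mathcal{L}$ (the stopping-problem analogue of the operator $L$ in \eqref{eq:def-L}--\eqref{eq:J-expectations}), prove it preserves continuity via the explicit $m_i,\vx,S_i$ representation and a Berge maximum-theorem argument over the deterministic parameter $u\in[0,T]$, and then identify $\G V$ as the uniform limit of the iterates $\mathcal{L}^n(\M V)$ using the deterministic-up-to-$\sigma_1$ characterization of stopping times (Brémaud T.33, Davis A2.3) and the interarrival estimate of Lemma~\ref{lem:fst-est}, exactly parallel to the treatment of $U_0$ in Proposition~\ref{prop:contUo} and of $U$ in Theorem~\ref{prop:mnprp} and Proposition~\ref{eq:hat-wn}. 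What your approach buys is self-containedness (no appeal to the external corollary) and an explicit quantitative rate $O(1/\sqrt{n-1})$ locally in $T$; what it costs is length, and two steps that you assert rather than prove — the fixed-point identity $\G V=\mathcal{L}(\G V)$ and the identification of the iterates with the values of the stopping problems restricted to $\tau\le\sigma_n\wedge T$ — though both follow by the same strong-Markov arguments the paper uses in Appendix B, so they are gaps of detail rather than of substance. Your closing remark about the boundary behaviour of $S_i w$ is handled at the same level of rigor as the paper's own treatment of $\Upsilon$, so it raises no new difficulty.
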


\begin{proof}
This follows as a result of Lemma~\ref{lem:mcont}: as shown in Corollary 3.1 of \cite{LS07}
(see also Remark 3.4 in \cite{BS06}), when $\M w$ is continuous, then the value function $\G w$
of this optimal stopping problem is also continuous.
\end{proof}

Let $V_0 \triangleq U_0$ (from \eqref{def:U-0}) and
\begin{equation}\label{defn:Un}
V_{n+1} \triangleq \G V_n, \quad n \geq 0.
\end{equation}
Clearly, since $\G$ is a monotone/positive operator, i.e. for any two functions $f_1 \leq f_2$
we have  $\G f_1 \leq \G f_2$, and since $V_1 \leq V_0$, $(V_n)_{n \in \mathbb{N}}$ is a
decreasing sequence of functions. The next two propositions show that this sequence converges
(point-wise) to the value function, and that the value function satisfies the dynamic
programming principle. Similar results were presented in Propositions 3.2 and 3.3 in
\cite{baylud08} (for a problem in which the controls do not interact with the observations).
The proofs of the following propositions are similar, and hence we give them in the Appendix
for the reader's convenience.

\begin{lemm}\label{prop:vnconvU}
$V_{n}(T,\vp,a) \downarrow U(T,\vp,a), \quad\text{for any}\quad T \in \R_+$, $\vp \in D$, $a
\in \A$.
\end{lemm}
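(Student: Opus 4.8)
The plan is to show the two inequalities $\lim_n V_n \geq U$ and $\lim_n V_n \leq U$ separately, exploiting the decreasing monotonicity of $(V_n)$ already noted, so that the pointwise limit $V_\infty \triangleq \lim_n V_n$ exists (possibly as a function valued in $[0,\infty]$, though the bounds below make it finite). The operator $\G$ packages ``do nothing until the first supply time $\tau$, then apply the best immediate order via $\M$'', so iterating $\G$ starting from $V_0 = U_0$ produces the cost of strategies that use at most $n$ supply orders; the content of the lemma is that restricting to $n$ orders is asymptotically costless.

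For $V_\infty \geq U$: I would argue that each $V_n(T,\vp,a)$ is itself the value of a genuine admissible control problem, namely the infimum of $J^\xi$ over strategies $\xi \in \U(T)$ using at most $n$ interventions. This is proved by induction on $n$ using the strong Markov property of $(\vP,P)$ from Proposition~\ref{cor:pdp}: the outer $\inf_{\tau \in \s(T)}$ in \eqref{def:G} chooses the first supply time, $\M V_n$ chooses the first supply size optimally, and $V_n(T-\tau,\vP_\tau,P_\tau)$ is, by the induction hypothesis, the optimal continuation cost with $n$ remaining orders; concatenating gives an admissible $(n{+}1)$-order strategy, and conversely any such strategy decomposes this way. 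Since every $n$-order strategy is admissible, $V_n \geq U$ for all $n$, hence $V_\infty \geq U$.

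For $V_\infty \leq U$: fix any admissible $\xi \in \U(T)$ satisfying \eqref{eq:admissiblity}, with supply times $\tau_1,\tau_2,\ldots$. Let $\xi^{(n)}$ be the truncation that implements only the first $n$ supply orders of $\xi$ and does nothing afterwards. Then $J^{\xi^{(n)}}(T,\vp,a) \geq V_n(T,\vp,a)$ by the previous paragraph. It remains to show $J^{\xi^{(n)}}(T,\vp,a) \to J^\xi(T,\vp,a)$ as $n \to \infty$. The difference between the two cost functionals involves only the contributions after $\tau_n$: the extra storage cost on $[\tau_n, T]$, the lost ordering cost $\sum_{k>n}\e^{-\rho\tau_k}(h\xi_k+\zeta)$, and the change in stock-out costs due to the altered inventory path after $\tau_n$. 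The ordering-cost tail vanishes by \eqref{eq:admissiblity} (dominated convergence on the summable series), and since the admissible strategies have finitely many switches almost surely, one has $\tau_n \to T$ (or $\tau_n > T$ eventually) $\P^{\vp,a,\xi}$-a.s.; the storage-cost and stock-out-cost discrepancies are then bounded by integrable quantities — $c(\bar P)\int_{\tau_n \wedge T}^T \e^{-\rho t}\,dt$ and $K(R)(N(T)-N(\tau_n \wedge T))$, controlled exactly as in \eqref{eq:kn-U0}–\eqref{eq:unconkn} using $\E^{\vp,a}[N(T)^2] \leq \bar\lambda T + (\bar\lambda T)^2$ — and go to zero by dominated convergence. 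Hence $J^\xi \geq V_\infty$, and taking the infimum over $\xi$ gives $U \geq V_\infty$.

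The main obstacle is the $V_\infty \leq U$ direction, specifically the interchange of limits needed to pass from an arbitrary admissible strategy to its truncations: one must verify that lopping off all supply orders after the $n$-th perturbs the total cost by a vanishing amount, uniformly enough to survive the expectation. The key facts that make this work are the a.s.\ finiteness of the number of switches (so truncation eventually changes nothing on a.e.\ path), the summability condition \eqref{eq:admissiblity}, and the second-moment bound on $N(T)$ already used in Proposition~\ref{prop:contUo}; assembling these via Cauchy–Schwarz and dominated convergence is routine once set up, but getting the decomposition of the cost difference clean is the delicate point. (Since the referenced Propositions~3.2–3.3 of \cite{baylud08} handle the non-interacting case, the real novelty here is checking that Proposition~\ref{cor:pdp}'s strong Markov property suffices to run the same argument when censoring couples controls and observations.)
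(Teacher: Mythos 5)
Your proposal follows essentially the same route as the paper: it first identifies each $V_n$ with the value $U_n$ of the problem restricted to at most $n$ supply orders (the paper's Proposition~\ref{prop:UneqVn}, proved by induction via the strong Markov property of $(\vP,P)$), giving $V_n \geq U$, and then shows $\lim_n V_n \leq U$ by truncating an arbitrary admissible $\xi$ at its $n$-th intervention and controlling the cost difference through the admissibility condition \eqref{eq:admissiblity}, the a.s.\ finiteness of switches, the integrability of $N(T)$, and dominated/monotone convergence. The only cosmetic difference is that you invoke Cauchy--Schwarz and the second-moment bound for the stock-out tail where the paper uses the first moment of $N(T)$ with monotone convergence; both variants are correct.
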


\begin{proof}
The proof makes use of the fact that the value functions defined by restricting the admissible
strategies  to the ones with at most $n \ge 1$ supply orders up to time $T$ can be obtained by
iterating operator $\G$ $n$-times (starting from $U_0$). This preliminary result is developed
in Appendix~\ref{sec:analysis1}. The details of the proof can be found in
Appendix~\ref{sec:proofofconvprop}.
\end{proof}

\begin{prop}\label{prop:dp}
The value function $U$ is the largest solution of the dynamic programming equation $\G U=U$,
such that $U \leq U_0$.
\end{prop}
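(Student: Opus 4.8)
The plan is to prove Proposition~\ref{prop:dp} by combining Lemma~\ref{prop:vnconvU} with a verification argument. There are two things to establish: first, that $U$ is a solution of $\G U = U$, and second, that it is the largest one bounded above by $U_0$.

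For the fixed-point property, I would argue that $\G$ is continuous along the decreasing sequence $(V_n)$ in the appropriate sense. Since $V_n \downarrow U$ pointwise by Lemma~\ref{prop:vnconvU}, and $V_{n+1} = \G V_n$, it suffices to show $\G V_n \downarrow \G U$. One inequality, $\G U \leq \G V_n$ for all $n$ (hence $\G U \leq \lim_n \G V_n = \lim_n V_{n+1} = U$), is immediate from monotonicity of $\G$ (stated just before Lemma~\ref{prop:vnconvU}) since $U \leq V_n$. For the reverse inequality $U \leq \G U$, I would fix an admissible strategy, decompose the cost in \eqref{eq:policy-performance} at the first supply time $\tau_1$ using the strong Markov property of $(\vP,P)$ from Proposition~\ref{cor:pdp}: the cost up to and including $\tau_1$ plus the continued cost from $\tau_1$ onward, which is at least $U(T-\tau_1, \vP_{\tau_1}, P_{\tau_1} + \xi_1) \geq \M U(T - \tau_1, \vP_{\tau_1}, P_{\tau_1}) - \zeta$ adjusted correctly; taking the infimum over the tail strategy and then over $(\tau_1,\xi_1)$ gives $U \geq \G U$, provided I already know the fixed-point inequality in the other direction is tight. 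Alternatively, and more cleanly, I would use that $\lim_n \G V_n = \G U$ via dominated/monotone convergence inside the optimal stopping functional in \eqref{def:G}: for each fixed stopping time $\tau$, $\M V_n(T-\tau, \vP_\tau, P_\tau) \downarrow \M U(T-\tau,\vP_\tau,P_\tau)$ since $\M$ is built from a minimum of $V_n(T,\vp,b)+h(b-a)+\zeta$ and these decrease to $U(T,\vp,b)+h(b-a)+\zeta$ (the min over the compact set $[a,\bar P]$ of a decreasing sequence of continuous functions converges to the min of the limit, which I would justify by a standard compactness argument as in Lemma~\ref{lem:mcont}); the running-cost terms are independent of $n$. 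Passing the limit through the infimum over $\tau$ requires care but follows because the integrands are uniformly bounded (by costs controlled as in \eqref{eq:kn-U0}) and monotone, so $\G V_n \downarrow \G U$, whence $U = \lim V_{n+1} = \lim \G V_n = \G U$.

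For the maximality claim, suppose $\tilde U$ is any other solution of $\G \tilde U = \tilde U$ with $\tilde U \leq U_0 = V_0$. Then by monotonicity of $\G$, $\tilde U = \G \tilde U \leq \G V_0 = V_1$, and inductively $\tilde U = \G^n \tilde U \leq \G^n V_0 = V_n$ for all $n$. Letting $n \to \infty$ and using Lemma~\ref{prop:vnconvU} gives $\tilde U \leq U$, which is exactly the maximality statement.

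The main obstacle is the interchange of limit and infimum in showing $\lim_n \G V_n = \G U$: the infimum over stopping times $\tau \in \s(T)$ is over an infinite set, so pointwise monotone convergence of the integrand for each fixed $\tau$ does not automatically give convergence of the infima. I would handle this with a two-sided estimate: the easy direction $\G U \leq \lim_n \G V_n$ from monotonicity, and the hard direction by choosing, for each $n$, a near-optimal $\tau_n$ for $\G V_n$, extracting uniform bounds on the associated costs (using $c(\bar P)$, $K(R)$, and the control on $N(T)$ and $\P^{\vp,a}\{T > \sigma_n\}$ exactly as in the proof of Proposition~\ref{prop:contUo}), and passing to a limit; the uniform cost bounds plus the dominated convergence already implicit in \eqref{eq:kn-U0} make the difference $\G V_n(T,\vp,a) - \G U(T,\vp,a)$ vanish. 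Since all the $V_n$ and $U$ are continuous (Proposition~\ref{prop:contUo}, Lemma~\ref{lemma:gmapsconttocont}) and the convergence is in fact uniform locally in $T$ by the same estimates, this interchange is legitimate.
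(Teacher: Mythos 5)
Your overall architecture (use $V_n\downarrow U$ from Lemma~\ref{prop:vnconvU}, monotonicity of $\G$ for one inequality, a limit-passage inside the stopping functional for the other, and an induction for maximality) is the same as the paper's, and your maximality argument is exactly the paper's Step 2. The gap is in the crucial inequality $U\le \G U$, i.e.\ $\lim_n \G V_n\le \G U$. Your stated plan for this ``hard direction'' --- choose, for each $n$, a near-optimal stopping time $\tau_n$ for $\G V_n$, extract uniform cost bounds, and pass to a limit --- cannot deliver it. Writing $F_n(\tau)$ for the functional in \eqref{def:G} with $\M V_n$ inside and $F(\tau)$ for the one with $\M U$, near-optimality of $\tau_n$ gives $\G V_n\ge F_n(\tau_n)-\eps\ge F(\tau_n)-\eps\ge \G U-\eps$, and any limit stopping time extracted from $(\tau_n)$ only yields bounds of the same form; in every variant you recover $U\ge \G U$, which is the direction you already have from monotonicity. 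No amount of uniform boundedness fixes this, because the needed inequality is an upper bound on $\lim_n\inf_\tau F_n(\tau)$, and such a bound must come from evaluating the approximating problems at a \emph{single fixed} stopping time adapted to the limit problem.

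That is precisely what the paper does, and it is elementary: take $\tilde\tau$ an $\eps$-optimal stopping time for $\G U$; since $V_n=\G V_{n-1}\le F_{n-1}(\tilde\tau)$ and $\M V_{n-1}\downarrow \M U$ pointwise (your observation about the minimum over $[a,\bar P]$ of a decreasing sequence is correct and needs no compactness), monotone convergence gives $U=\lim_n V_n\le F(\tilde\tau)\le \G U+\eps$. Your ``cleaner alternative'' contains the right ingredients (fixed $\tau$, convergence of $\M V_n$, boundedness of the integrands), but you never assemble them into this one-line step, and instead route the conclusion through the flawed $\tau_n$ argument. Two smaller points: your first, verification-style route is circular, as you yourself note; and you invoke continuity of $U$ and locally uniform convergence $V_n\to U$, neither of which is available at this stage --- both are only established in Theorem~\ref{prop:mnprp}, which relies on this proposition --- whereas the correct argument needs only the pointwise monotone convergence of Lemma~\ref{prop:vnconvU}.
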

\begin{proof}
The result follows from the monotonicity of $\G$ and Proposition~\ref{prop:vnconvU}.
See Appendix~\ref{sec:proofofprop31} for the details.
\end{proof}
The Theorem below improves the results of Proposition~\ref{prop:dp} and helps us describe an optimal policy. Let us first point out that $U_0$ and hence $U$ are bounded.

\begin{rem}\label{rem:bddu}
It can be observed from the proof of Proposition~\ref{prop:mnprp} that the value function
$U(T,\cdot,a)$ is uniformly bounded,
\begin{align*}
 0 \leq U(T,\vp,a) & \le U_0(T,\vp,a) = \E^{\vp,a} \left[ \int_0^T \e^{-\rho s} c(P_s) \,ds
+ \sum_{k} \e^{-\rho \sigma_k}1_{\{ \sigma_k \le T\}}K( (Y_k - P_{\sigma_k-})_+) \right] \\
 & \le \int_0^T \e^{-\rho s} c(\bar{P}) \, ds + \E^{\vp,a} \left[ \sum_k \e^{-\rho \sigma_k}1_{\{ \sigma_k \le
 T\}}K(R) \right] \\
 & \le c(\bar{P}) T + K(R) \E^{\vp,a}[ N(T)] \leq [c(\bar{P}) +K(R) \bar{\lambda}] \cdot T,
 \end{align*}
 since $N$ is a counting process with maximum intensity $\bar{\lambda}$.
\end{rem}

Below is the main result of this section.

\begin{thm}\label{prop:mnprp}
The value function $U$ is the unique fixed point of $\G$ and it is continuous.
\end{thm}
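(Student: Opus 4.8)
The plan is to combine the monotone convergence $V_n \downarrow U$ from Lemma~\ref{prop:vnconvU} with the continuity-preservation of $\G$ from Lemma~\ref{lemma:gmapsconttocont}, and to upgrade the pointwise convergence to \emph{locally uniform} convergence in $T$ so that continuity passes to the limit. The fixed-point equation $\G U = U$ is already in hand from Proposition~\ref{prop:dp}; what remains is (i) continuity of $U$ and (ii) uniqueness of the fixed point.

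First I would establish a uniform rate of convergence for $V_n \to U$. The key observation is that $V_n$ is exactly the value function when admissible strategies are restricted to those using at most $n$ supply orders (this is the fact underlying Lemma~\ref{prop:vnconvU}, developed in the Appendix). Hence $0 \le V_n(T,\vp,a) - U(T,\vp,a)$ is bounded by the extra cost of being forced to stop ordering after the $n$-th order; since every order costs at least the fixed amount $\zeta > 0$, any strategy with many orders is expensive, and one can bound the difference in terms of $\P^{\vp,a}\{T > \sigma_n\}$ or, more directly, in terms of the tail of the number of interventions. Using the admissibility bound \eqref{eq:admissiblity} together with the fixed cost $\zeta$, and an estimate in the spirit of Lemma~\ref{lem:fst-est} (note $\bar\lambda T/(n-1) \to 0$), one gets $\sup_{T \le \bar T}\,\|V_n - U\|_\infty \to 0$ for each fixed $\bar T$. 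Concretely, restricting to strategies with at most $n$ orders loses at most the discounted continuation cost from $\tau_n$ onward, which by Remark~\ref{rem:bddu} is $\le [c(\bar P) + K(R)\bar\lambda]\cdot T$ times $\P^{\vp,a}\{\tau_n < T\}$, and the latter probability goes to zero uniformly in $\vp, a$ by the same Laplace-transform argument as in Lemma~\ref{lem:fst-est} (orders are separated by at least the inter-arrival structure, or one simply invokes \eqref{eq:admissiblity}).

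Given locally uniform convergence: each $V_n$ is continuous (by induction, since $V_0 = U_0$ is continuous by Proposition~\ref{prop:contUo} and $\G$ preserves continuity by Lemma~\ref{lemma:gmapsconttocont}), so the uniform limit $U$ is continuous on $[0,\bar T]\times D \times \A$ for every $\bar T$, hence continuous. For uniqueness, suppose $\widetilde U$ is another fixed point of $\G$. Since $\G$ is monotone and $\widetilde U \le U_0$ is not automatic, I would instead argue via a contraction-type estimate: for any two bounded functions $V, V'$, from the definition \eqref{def:G} and \eqref{eq:intervention-op} one has $|\G V(T,\vp,a) - \G V'(T,\vp,a)| \le \E^{\vp,a}[\e^{-\rho\tau^*}\,|\M V - \M V'|(T-\tau^*, \vP_{\tau^*}, P_{\tau^*})] \le \sup_{s \le T}\|V - V'\|_\infty \cdot \sup_{\vp,a}\E^{\vp,a}[\e^{-\rho\tau}\,;\,\text{an order occurs}]$, and more usefully $\G V$ and $\G V'$ at maturity $T$ differ only through values at maturities $T - \tau \le T - \sigma_1$, so iterating $n$ times pins the discrepancy to the event $\{\sigma_n < T\}$; letting $n \to \infty$ and using the same tail estimate forces $\widetilde U = U$. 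Equivalently, any fixed point equals $\lim_n \G^n \widetilde U$ and, sandwiching $\widetilde U$ appropriately (or using that the value-restricted-to-$n$-orders interpretation identifies $\G^n \widetilde U$ up to a vanishing tail), this limit is $U$.

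The main obstacle I anticipate is the uniqueness argument: $\G$ is not a strict contraction on the whole space (there is no discount factor multiplying the running integral in a way that shrinks the sup-norm outright, and $\zeta$, not $\rho$, is what controls the number of orders), so one cannot simply invoke Banach. The right device is the ``at most $n$ orders'' representation: show that for \emph{any} fixed point $\widetilde U$, $\G^n \widetilde U$ coincides with the $n$-order-restricted value function up to an error governed by $\P^{\vp,a}\{\sigma_n < T\}$ (which vanishes by Lemma~\ref{lem:fst-est}), and since that restricted value also converges to $U$, we conclude $\widetilde U = U$. Getting this bookkeeping clean — in particular checking that a fixed point really does unfold into the $n$-step dynamic programming recursion without extra terms — is the delicate part, but it is exactly parallel to Propositions 3.2–3.3 of \cite{baylud08} and to \cite{CostaDavis89}, so the structure is known.
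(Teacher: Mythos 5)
Your overall architecture --- take $\G U = U$ from Proposition~\ref{prop:dp}, upgrade $V_n \downarrow U$ to locally uniform convergence, deduce continuity of $U$ from continuity of each $V_n$, and obtain uniqueness by unfolding the fixed-point equation --- is sensible, and the continuity-from-uniform-convergence step is exactly the paper's Step 2. But the quantitative core of your argument rests on a conflation of the two time scales in the problem. You repeatedly control the error after $n$ iterations of $\G$ (equivalently, after restricting to $n$ supply orders) by $\P^{\vp,a}\{\sigma_n < T\}$ and Lemma~\ref{lem:fst-est}, asserting that orders ``are separated by at least the inter-arrival structure'' and that ``iterating $n$ times pins the discrepancy to $\{\sigma_n<T\}$.'' That is not true: iterating $\G$ counts \emph{interventions} $\tau_k$, not demand arrivals $\sigma_k$, and the manager may place arbitrarily many supply orders between two consecutive demand arrivals (in continuous time, even instantaneously). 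Lemma~\ref{lem:fst-est} is a Laplace-transform estimate for the Poisson arrival times $\sigma_n$ only, and \eqref{eq:admissiblity} gives finiteness per strategy with no uniformity over strategies or over $(T,\vp,a)$. So neither mechanism you invoke yields $\sup_{T\le \bar{T}}\|V_n-U\|_\infty \to 0$, and the same conflation undermines the uniqueness bookkeeping.

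What actually controls the number of interventions is the fixed cost $\zeta>0$, and it must enter quantitatively. The paper does this by proving the domination $U_0 \le k\,\G 0$ with $k=\bigl(C_\rho(0)+K(R)\bar{\lambda}\bar{T}\bigr)/\zeta \vee 1$ (using the bound of Remark~\ref{rem:bddu}) and then invoking Lemma 1 of \cite{MR1150206} for monotone operators, which simultaneously yields uniqueness of the fixed point and uniform convergence of $V_n$ to $U$ on $[0,\bar{T}]$; continuity then follows as you describe. If you prefer to keep your probabilistic truncation route, the repair is to restrict to $\eps$-optimal strategies: any such strategy has cost at most $U_0+\eps \le [c(\bar{P})+K(R)\bar{\lambda}]\bar{T}+\eps$, so Markov's inequality applied to the term $\sum_k \e^{-\rho\tau_k}\zeta$ gives $\P\{\tau_n<T\}\le \e^{\rho \bar{T}}\bigl([c(\bar{P})+K(R)\bar{\lambda}]\bar{T}+\eps\bigr)/(n\zeta)$ uniformly in $(T,\vp,a)$, after which a truncation estimate in the spirit of \eqref{eq:kn-U0} produces the uniform rate for $V_n-U$; an analogous $\zeta$-based (not $\sigma_n$-based) bound is what you would need to make the uniqueness unfolding rigorous. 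As written, the proposal has a genuine gap at precisely these steps.
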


\begin{proof}
\textbf{Step 1.}
 Let us fix $\bar{T}>0$.
 We will first show that  $U$ is the unique fixed point of $\G$ and that $(V_n)_{n \in \mathbb{N}}$ converges to $U$ uniformly on $T \in [0, \bar{T}]$, $\vp \in D$, $a \in \A$.
 Let us restrict our functions $U_0$ and $U$ to
$T \in [0, \bar{T}]$, $\vp \in D$, $a \in \A$. And we will consider the restriction of $\G$ that acts on functions that are defined on $T \in [0, \bar{T}]$, $\vp \in D$, $a \in \A$.
Thanks to Lemma 1 of \cite{MR1150206} (also see \cite{zab83}) it is enough to show that $U_0 \leq k \G 0$ for some $k(\bar{T})>0$ (We showed that $U_0$ is continuous in Proposition~\ref{prop:contUo} and that $U_0$ is bounded in $[0,\bar{T}]$ in Remark~\ref{rem:bddu}
 in order to apply this lemma).
For any stopping time $\tau \leq T$
\begin{equation}\label{eq:U0-vs-G0}
\begin{split}
U_0(T,\vp,a)&=\E^{\vp,a} \bigg[   \int_0^{\tau} \e^{- \rho t }  \, c(P_t) \, dt   + \sum_{k :
\sigma_k \le \tau} \e^{-\rho \sigma_k }K( (Y_k - P_{\sigma_k-})_+)
\\& \qquad +\int_{\tau}^{T}  \e^{- \rho t }  \, c(P_t) \, dt  +\sum_{k :
\sigma_k \in (\tau,T]} \e^{-\rho \sigma_k }K( (Y_k - P_{\sigma_k-})_+) \bigg].
\end{split}
\end{equation}
Next, we will provide upper bounds for the terms in the second line of \eqref{eq:U0-vs-G0}.
First, note that
\begin{equation}\label{eq:est-1}
\int_{0}^{T} \e^{-\rho t}c(P_t) \, dt \leq C_\rho(\tau) \triangleq \left\{
\begin{aligned} \frac{c(\bar{P})}{\rho}\e^{-\rho \tau} & \quad\text{ if }\quad \rho > 0 \\
(T-\tau)c(\bar{P}) & \quad\text{ if }\quad \rho=0.\end{aligned}\right.
\end{equation}
Second,
\begin{equation}\label{eq:sumkK}
\sum_{k : \sigma_k \in (\tau,T]} \e^{-\rho \sigma_k }K( (Y_k - P_{\sigma_k-})_+)  \leq\e^{-\rho
\tau} K(R)\sum_k 1_{\{\sigma_k \in (\tau,T]\}}.
\end{equation}
The expected value of sum on the right-hand-side of (\ref{eq:sumkK}) is bounded above by a
constant, namely
\begin{equation}\label{eq:est-lst}
\E^{\vp,a} \left[ \sum_k 1_{\{\sigma_k \in (\tau,T]\}}\right] = \E^{\vp,a}[ N(T) - N(\tau)] \le
\bar{\lambda}T.
\end{equation}

Using the estimates developed in \eqref{eq:est-1}-\eqref{eq:est-lst} back  in
\eqref{eq:U0-vs-G0}, we obtain that
\begin{equation}\label{eq:bndU0-G01}
\begin{split}
U_0(T,\vp,a) & \leq \E^{\vp,a} \bigg[   \int_0^{\tau} \e^{- \rho t }  \, c(P_t) \, dt   +
\sum_{k : \sigma_k \le \tau} \e^{-\rho \sigma_k }K( (Y_k - P_{\sigma_k-})_+) +\e^{-\rho \tau}
(C_\rho(\tau) + K(R)\bar{\lambda}T)\bigg]
\\ & \leq \left(\frac{C_\rho(0) + K(R)\bar{\lambda}T}{\zeta} \vee 1\right) \E^{\vp,a} \bigg[   \int_0^{\tau} \e^{- \rho t }  \, c(P_t) \, dt   + \sum_{k :
\sigma_k \le \tau} \e^{-\rho \sigma_k }K( (Y_k - P_{\sigma_k-})_+) +\e^{-\rho \tau}\zeta\bigg]
\end{split}
\end{equation}
Minimizing the right-hand-side over all admissible stopping times $\tau$ we obtain that $U_0
(T,\vp,a)$
\begin{equation*}
\begin{split}
&\leq \left(\frac{C_\rho(0) + K(R)\bar{\lambda}T}{\zeta} \vee 1\right) \inf_{\tau \in
\S(T)}\E^{\vp,a} \bigg[   \int_0^{\tau} \e^{- \rho t }  \, c(P_t) \, dt   + \sum_{k : \sigma_k
\le \tau} \e^{-\rho \sigma_k }K( (Y_k - P_{\sigma_k-})_+) +\e^{-\rho \tau}\zeta\bigg]
\\ &\leq \left(\frac{C_\rho(0) + K(R)\bar{\lambda}T}{\zeta} \vee 1\right) \G 0 \leq \left(\frac{C_\rho + K(R)\bar{\lambda}\,\,\bar{T}}{\zeta} \vee 1\right) \G 0,
\end{split}
\end{equation*}
which establishes the desired result. Moreover since $\bar{T}$ is arbitrary we see that $U$ is indeed the unique fixed point of $\G$ among all the functions defined on $T \in \R_+$, $\vp
\in D$, $a \in \A$.

\noindent \textbf{Step 2.} We will show that $U$ is continuous. Since $(V_n)_{n \in \mathbb{N}}$  converges to $U$ uniformly on $T \in [0, \bar{T}]$, $\vp
\in D$, $a \in \A$ for any $\bar{T}<\infty$ the proof will follows once we can show that
every element in the sequence $(V_n)_{n \in \mathbb{N}}$ is continuous. But this result follows from Lemma~\ref{lemma:gmapsconttocont} and the continuity of $U_0$.
\end{proof}

Using the continuity of the value function, one can prove that the strategy given in the next
proposition is optimal. The proof is analogous to the proof of Proposition 4.1 of
\cite{baylud08}.
\begin{prop}\label{prop:opt-strat}
Let us iteratively define  $\xi^* = (\xi_0, \tau_0; \xi_1, \tau_1, \ldots)$ via
$\xi_0 = a, \tau_0 = 0$ and
\begin{align}
\left\{
\begin{aligned}
\tau_{k+1} &= \inf \left\{s \in [\tau_k,T] \colon U(T-s,\vP(s),P_s) = \M U(T-s,\vP(s), P_s) \right\}; \\
\xi_{k+1} & = d_{\M U}(T-\tau_{k+1},\vP({\tau_{k+1}}),P_{\tau_{k+1}}), \qquad k = 0,1,\ldots,
\end{aligned} \right.
\end{align}
with the convention that $\inf \emptyset =T+\eps$, $\eps>0$, and $\tau_{k+1}=0$.
Then $\xi^*$ is an optimal strategy for \eqref{def:U}.
\end{prop}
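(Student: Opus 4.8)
The plan is to verify that the candidate strategy $\xi^*$ is admissible and that it achieves the value $U(T,\vp,a)$, using the fixed-point characterization $\G U = U$ from Theorem~\ref{prop:mnprp} together with the optimality of the stopping rule inside $\G$. First I would argue that the stopping times $\tau_{k+1}$ appearing in the definition are genuinely $\F^W$-stopping times: since $U$ is continuous (Theorem~\ref{prop:mnprp}) and $\M U$ is continuous (Lemma~\ref{lem:mcont}), the set $\{s : U(T-s,\vP(s),P_s) = \M U(T-s,\vP(s),P_s)\}$ is the hitting time of a closed set by the right-continuous strong Markov process $(\vP,P)$, hence a stopping time; and $\xi_{k+1} = d_{\M U}(\cdot)$ is $\Fc^W_{\tau_{k+1}}$-measurable because $d_{\M U}$ is a measurable selection (the $\min$ in \eqref{eq:optimal-action} is attained by continuity and the selection is measurable in the parameters). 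Admissibility in the sense of \eqref{eq:admissiblity} — finitely many interventions with summable discounted fixed costs — should follow from the lower bound $U \ge 0$ together with the fixed cost $\zeta>0$ incurred at each $\tau_k$: each intervention strictly decreases the remaining optimal cost by at least... actually more carefully, I would bound the number of interventions using Remark~\ref{rem:bddu} (which gives $U \le [c(\bar P)+K(R)\bar\lambda]T$) and the fact that every intervention costs at least $\zeta$, so any strategy achieving cost close to $U$ can make at most $O(T/\zeta)$ interventions.

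The core of the argument is a verification/dynamic-programming step. Using $\G U = U$ and the fact that $\tau_1$ is precisely the optimal stopping time for the optimal stopping problem defining $\G U(T,\vp,a)$ (the first entry into the stopping region $\{U = \M U\}$ is optimal for such problems — this is the standard result cited for Lemma~\ref{lemma:gmapsconttocont}, i.e. Corollary 3.1 of \cite{LS07}), we get
\begin{equation*}
U(T,\vp,a) = \E^{\vp,a}\Bigl[ \int_0^{\tau_1} \e^{-\rho s} c(P_s)\,ds + \sum_\ell \sum_i \e^{-\rho\sigma_\ell} 1_{\{\sigma_\ell \le \tau_1\}} \vP^{(i)}_{\sigma_\ell} \textstyle\int_{\R_+} K((y-P_{\sigma_\ell-})_+)\nu_i(dy) + \e^{-\rho\tau_1} \M U(T-\tau_1,\vP_{\tau_1},P_{\tau_1}) \Bigr],
\end{equation*}
where on the right the dynamics of $(\vP,P)$ are the uncontrolled ones from Proposition~\ref{cor:pdp}. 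Then $\M U(T-\tau_1,\vP_{\tau_1},P_{\tau_1}) = U(T-\tau_1,\vP_{\tau_1},P_{\tau_1}+\xi_1) + h\xi_1 + \zeta$ by the definition of $d_{\M U}$, which matches exactly the cost of placing supply order $\xi_1$ at time $\tau_1$ and then continuing optimally with time-to-maturity $T-\tau_1$ and inventory $P_{\tau_1}+\xi_1$. Iterating this identity along $\tau_1,\tau_2,\ldots$ — i.e. applying the strong Markov property of $(\vP,P)$ at each $\tau_k$ and peeling off one more $\G U = U$ — expresses $U(T,\vp,a)$ as the expected cost of running $\xi^*$ up to $\tau_n$ plus a remainder term $\E^{\vp,a}[\e^{-\rho\tau_n} U(T-\tau_n, \vP_{\tau_n}, P_{\tau_n})]$.

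To close, I would let $n\to\infty$: the remainder term vanishes because either $\rho>0$ (exponential discounting), or the number of interventions is a.s. finite so $\tau_n \to T+\eps$ (the $\inf\emptyset$ convention) eventually, and $U(0,\cdot,\cdot)$ is bounded by Remark~\ref{rem:bddu}; combined with the boundedness of $U$ and dominated convergence, $\E[\e^{-\rho\tau_n}U(T-\tau_n,\vP_{\tau_n},P_{\tau_n})]\to 0$. This yields $U(T,\vp,a) = J^{\xi^*}(T,\vp,a)$, so $\xi^*$ is optimal. The main obstacle I anticipate is the argument that $\xi^*$ makes only finitely many interventions almost surely (hence is admissible) and the associated control of the tail term $\E[\e^{-\rho\tau_n}U(T-\tau_n,\cdot,\cdot)]$ when $\rho = 0$; this requires showing that the candidate strategy cannot "chatter," which uses $\zeta > 0$ and the uniform bound on $U$ — essentially one shows the total discounted fixed cost $\E[\sum_k \e^{-\rho\tau_k}\zeta]$ incurred by $\xi^*$ is finite, which is built into the verification identity above once one knows $U \le U_0 < \infty$. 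A secondary technical point is justifying the measurable-selection and stopping-time claims rigorously for the feedback form, for which I would invoke the piecewise-deterministic strong Markov structure from Proposition~\ref{cor:pdp} and the general pdp impulse-control framework of \cite{CostaDavis89,davis93}.
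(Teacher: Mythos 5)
Your verification argument --- using $\G U = U$, optimality of the first hitting time of the closed region $\{U=\M U\}$ (justified by the continuity of $U$ and $\M U$), iterating via the strong Markov property of $(\vP,P)$ at the successive $\tau_k$, and killing the remainder term through the a.s.\ finiteness of interventions forced by $\zeta>0$ together with $U(0,\cdot,\cdot)=0$ and boundedness of $U$ --- is correct and is essentially the proof the paper intends, which it delegates to the analogous Proposition 4.1 of \cite{baylud08}. The only small imprecision is the citation: Corollary 3.1 of \cite{LS07} gives continuity of the stopping value, not optimality of the hitting time of the stopping region, which instead follows from the standard optimal-stopping theory for piecewise deterministic Markov processes (e.g.\ \cite{CostaDavis89,davis93}).
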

Proposition \ref{prop:mnprp} implies that to implement an optimal policy the manager should
continuously compare the intervention value $\M U$ versus the value function $U \ge \M U$. As
long as, $U > \M U$, it is optimal to do nothing; as soon as $U = \M U$, new inventory in the
amount $d_{\M U}$ should be ordered. The overall structure thus corresponds to a time- and
belief-dependent $(s,S)$ strategy which matches the intuition of real-life inventory managers.

\begin{rem}\label{rem:qvi}
As a result of the dynamic programming principle, proved in Theorem~\ref{prop:mnprp}, the value
function $U$ is also expected to be the unique weak solution of  a coupled system of QVIs
(quasi-variational inequalities)
\begin{equation}
\begin{split}
-\frac{\partial}{\partial T}U (T,\vp,a) + \mathcal{A} U  (T,\vp,a) -\rho  U  (T,\vp,a) + c(\vp,a) &\leq 0, \\
U  (T,\vp,a) &\geq \mathcal{M} U  (T,\vp,a),\\
\left(-\frac{\partial}{\partial T}U  (T,\vp,a) + \mathcal{A} U  (T,\vp,a) -\rho U  (T,\vp,a) +
c (\vp,a)\right)(U ( (T,\vp,a))-\mathcal{M}&U  (T,\vp,a))=0.
\end{split}
\end{equation}
Here $\mathcal{A}$ is the infinitesimal generator (first order integro-differential operator)
of the piece-wise deterministic Markov process $(\vP,P)$, whose paths are given by
Proposition~\ref{cor:pdp}. To determine $U$ one could attempt to numerically solve the above
multi-dimensional QVI. However, this is a non-trivial task. We will see that the value function
can be characterized in a way that naturally leads to a numerical implementation in
Section~\ref{sec:examples}. Also, having a weak solution is not good enough for existence of
optimal control, whereas in Theorem~\ref{prop:mnprp} we directly established the regularity
properties of $U$ which lead to a characterization of an optimal control.
\end{rem}

\subsection{Computation of the Value Function}\label{sec:examples} The characterization of the
value function $U$ as a fixed point of the operator $\G$ is not very amenable for actually
computing $U$. Indeed, solving the resulting coupled optimal stopping problems is generally a
major challenge. Recall that $U$ is also needed to obtain an optimal policy of Proposition
\ref{prop:opt-strat} which is the main item of interest for a practitioner.

To address these issues, in the next subsection we develop another dynamic programming equation
that is more suitable for numerical implementation. Namely, Proposition \ref{eq:hat-wn}
provides a representation for $U$ that involves only the operator $L$ which consists of a
deterministic optimization over time. This operator can then be easily approximated on a
computer using a time- and belief-space discretization. We have implemented such an algorithm
and in Section~\ref{sec:illust} then use this representation to give two numerical
illustrations.

We will show that the value function $U$ satisfies a second
dynamic programming principle, namely $U$ is the fixed point of the first jump operator
$L$, whose action on test function
$V$ is given by
\begin{multline}\label{eq:def-L}
L (V) (T,\vp,a) \triangleq \inf_{t \in [0,T]} \E^{\vp,a}\bigg[\int_0^{t \wedge
\sigma_1}\e^{-\rho s}c(p(s,a))\, ds \\ +1_{\{t <\sigma_1\}}\e^{-\rho t}\M V(T-t,\vP_t,P_t)
+\e^{-\rho \sigma_1}1_{\{t \geq \sigma_1\}} V(T-\sigma_1,\vP_{\sigma_1},P_{\sigma_1})\bigg].
\end{multline}
 This representation will be used in our numerical computations in Section
\ref{sec:illust}.
Observe that the operator $L$ is  monotone. Using the characterization of the stopping times of piecewise
deterministic Markov processes (Theorem T.33 \cite{bremaud}, and Theorem A2.3 \cite{davis93}),
which state that for any $\tau \in \S(T)$, $\tau \wedge \sigma_1 = t\wedge \sigma_1$ for some
constant $t$, we can write
\begin{equation}\label{eq:L-optimal-stopping}
\begin{split}
L (V) (T,\vp,a)&= \inf_{\tau \in \mathcal{S}(T)} \E^{\vp,a}\bigg[\int_0^{\tau \wedge
\sigma_1}\!\e^{-\rho s}c(p(s,a)) \,ds+1_{\{\tau<\sigma_1\}}\e^{-\rho \tau}\M
V(T-\tau,\vP_{\tau},P_{\tau})
\\&+\e^{-\rho \sigma_1}1_{\{\tau \geq \sigma_1\}} V(T-\sigma_1,\vP_{\sigma_1},P_{\sigma_1})\bigg],
\end{split}
\end{equation}

The following proposition gives the
characterization of $U$ that we will use in Section~\ref{sec:illust}. The proof of this result is carried out along the same lines as the proof of Proposition  3.4 of
\cite{baylud08}. The main ingredient is Theorem~\ref{prop:mnprp}.  We will skip the proof of
this result and leave it to the reader as an exercise.
\begin{prop}\label{eq:hat-wn} $U$ is the unique fixed point of $L$. Moreover, the
following sequence which is constructed by iterating $L$,
\begin{equation}\label{def:W}
W_0 \triangleq U_0, \quad W_{n+1} \triangleq L W_n, \quad n \in \mathbb{N},
\end{equation}
satisfies $W_n \searrow U$ (uniformly).
\end{prop}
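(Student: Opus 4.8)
The plan is to mirror the argument used for Theorem~\ref{prop:mnprp} but now with the operator $L$ in place of $\G$, leveraging the fact that we already know $U$ is the value function and the unique fixed point of $\G$. First I would verify that $L$ maps continuous functions to continuous functions: this follows from Lemma~\ref{lem:mcont} (continuity of $\M$) together with the same kind of optimal-stopping-continuity argument already invoked in Lemma~\ref{lemma:gmapsconttocont}, since \eqref{eq:L-optimal-stopping} exhibits $L(V)$ as the value function of an optimal stopping problem over $\s(T)$ in which only the first jump $\sigma_1$ matters. Next I would show $U$ is a fixed point of $L$: by the strong Markov property of the piecewise deterministic process $(\vP,P)$ at the first order time $\sigma_1$, and using the dynamic programming principle $\G U = U$ of Proposition~\ref{prop:dp}/Theorem~\ref{prop:mnprp}, the cost decomposition up to $\tau\wedge\sigma_1$ gives $L U = U$; concretely, an optimal policy either intervenes before $\sigma_1$ (giving the $\M U$ term) or waits past it (giving the $U(T-\sigma_1,\cdot)$ term), which is exactly the structure in \eqref{eq:L-optimal-stopping}.

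Then I would set up the iteration $W_0 = U_0$, $W_{n+1} = LW_n$. Monotonicity of $L$ plus $W_1 = LU_0 \le U_0 = W_0$ (a one-step intervention is admissible, so $LU_0\le U_0$) gives that $(W_n)$ is decreasing, hence converges pointwise to some limit $W_\infty \ge U$ (the latter inequality because $U$ is a fixed point below $U_0$ and $L$ is monotone, so $W_n \ge U$ for all $n$). To identify $W_\infty$ with $U$, the natural route is the contraction-type argument of Lemma~1 of \cite{MR1150206}: restrict to $T\in[0,\bar T]$ and show $U_0 \le k(\bar T)\, L0$ for some constant $k(\bar T)>0$. This estimate is essentially the same as the one carried out for $\G 0$ in Step~1 of the proof of Theorem~\ref{prop:mnprp} — one bounds the running cost $\int_0^\tau \e^{-\rho t}c(P_t)\,dt$, the stock-out costs up to $\tau$, and the post-$\tau$ costs by the constant $C_\rho(0)+K(R)\bar\lambda T$ as in \eqref{eq:est-1}--\eqref{eq:est-lst}, and takes $\tau$ to be (the time part of) an optimal stopping time for $L0$; the only adjustment is that in $L0$ the post-$\sigma_1$ continuation value is $0$ rather than $\M 0$, but since $U_0$ already accounts for all costs and $\M 0 \ge \zeta > 0$, the same bound goes through with $k(\bar T) = \big((C_\rho(0)+K(R)\bar\lambda\bar T)/\zeta\big)\vee 1$. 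Uniqueness of the fixed point of $L$ and uniform convergence $W_n \searrow U$ on $[0,\bar T]$ then follow from that lemma, and since $\bar T$ is arbitrary the conclusion holds on all of $\R_+ \times D \times \A$.

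Finally, continuity of the limit: since each $W_n$ is continuous (by the mapping property of $L$ and continuity of $W_0 = U_0$ from Proposition~\ref{prop:contUo}) and $W_n \to U$ uniformly on compacts in $T$, the limit $U$ is continuous — though this is of course already known from Theorem~\ref{prop:mnprp}, so it need not be re-derived.

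I expect the main obstacle to be the verification that $U$ is genuinely a fixed point of $L$, i.e.\ the careful bookkeeping in the strong Markov decomposition at $\sigma_1$: one must check that restricting the stopping time in \eqref{def:G} to the form $\tau\wedge\sigma_1$ (justified by Theorem~T.33 of \cite{bremaud} and Theorem~A2.3 of \cite{davis93}) does not lose optimality, and that on $\{\tau \ge \sigma_1\}$ the continuation after the first jump is exactly $U(T-\sigma_1,\vP_{\sigma_1},P_{\sigma_1})$ rather than $\M U(\cdots)$ — that is, that an optimal manager never needs to intervene exactly at an order epoch in a way not already captured. This is precisely the point handled in \cite[Proposition~3.4]{baylud08}, and as the excerpt notes, the argument there transfers essentially verbatim, so I would cite it and omit the routine details.
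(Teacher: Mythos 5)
Your proposal is correct and is essentially the route the paper intends: the paper omits the proof, pointing to Proposition 3.4 of \cite{baylud08} with Theorem~\ref{prop:mnprp} as the main ingredient, which is exactly your combination of (i) verifying $LU=U$ via the strong Markov property at $\sigma_1$, the pdp stopping-time characterization $\tau\wedge\sigma_1=t\wedge\sigma_1$, and $\G U=U$, and (ii) the Lemma~1 of \cite{MR1150206} machinery applied to $L$ on $[0,\bar T]$. The one small repair is your claim that the estimate $U_0\le k(\bar T)\,L0$ goes through with the same constant as for $\G 0$: on $\{\sigma_1\le t\}$ the $L0$-payoff carries neither $\zeta$ nor (necessarily) a positive stock-out term, so the paper's pointwise domination does not transfer verbatim; instead bound directly $L0\ge \zeta\,\e^{-(\rho+\bar\lambda)\bar T}$ (since $\M 0=\zeta$ and $\P^{\vp,a}\{\sigma_1>\bar T\}=\E^{\vp,a}[\e^{-I(\bar T)}]\ge\e^{-\bar\lambda \bar T}$) and $U_0\le[c(\bar{P})+K(R)\bar\lambda]\,\bar T$ by Remark~\ref{rem:bddu}, which supplies a valid $k(\bar T)$.
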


\begin{rem}
Using Fubini's theorem, \eqref{eq:jumps-of-vP} and \eqref{def:m} we can write
$L$ as
\begin{multline}
\label{eq:J-expectations} L V (T,\vp,a)=  \inf_{0\leq t \leq T} \bigg\{\bigg(\sum_{i \in
E} m_i(t,\vp)  \bigg)
 \cdot \e^{- \rho t } \M
V\left(T-t,\vx( t, \vp ), p(t,a) \right) \\
+  \int_{0}^{t} \e^{- \rho u}     \sum_{i \in E} m_i(u,\vp) \cdot \Bigl(  c(p(u,a)) + \lambda_i
\cdot S_i V(T-u, \vx(u, \vp), p(u,a)) \Bigr) du\bigg\},
\end{multline}
in which $S_i$ is given by \eqref{def:S}. Observe that given future values of
$V(s,\cdot,\cdot)$, $s\le T$, finding $L(V)(T,\vp,a)$ involves just a deterministic
optimization over $t$'s.
\end{rem}

In our numerical computations below we discretize the interval $[0,T]$ and find the
deterministic supremum over $t$'s in \eqref{eq:J-expectations}. We also discretize the domain
$D$ using a rectangular multi-dimensional grid and use linear interpolation to evaluate the
jump operator $S_i$ of \eqref{def:S}. Because the algorithm proceeds forward in time with
$t=0,\Delta t, \ldots, T$, for a given time-step $t= m\Delta t$, the right-hand-side in
\eqref{eq:J-expectations} is known and we obtain $U(m\Delta t, \vp, a)$ directly. The
sequential approximation in \eqref{eq:hat-wn} is on the other hand useful for numerically
implementing infinite horizon problems.

\section{Numerical Illustrations}\label{sec:illust}
We now present two numerical examples that highlight the structure and various features of our
model. These examples were obtained by implementing the algorithm described in the last
paragraph of Section \ref{sec:examples}.

\subsection{Basic Illustration}\label{sec:ts-example} Our first example is based on the
computational analysis in \cite{TreharneSox}. The model in that paper is stated in
discrete-time; here we present a continuous-time analogue. Assume that the world  can be in
three possible states, $M_t \in E = \{ High, Medium, Low \}$. The corresponding demand
distributions are truncated negative binomial with maximum size $R=18$ and
$$ \nu_1 = NegBin(100, 0.99), \qquad \nu_2 = NegBin(900, 0.99), \qquad \nu_3 = NegBin(1600,
0.99).$$ This means that the expected demand sizes/standard deviations are $(1,1), (9,3)$ and
$(16,4)$ respectively.

The generator of $M$ is taken to be
$$ Q = \begin{pmatrix} -0.8 & 0.4 & 0.4 \\
0.4 & -0.8 & 0.4 \\
0.4 & 0.4 & -0.8 \end{pmatrix},$$ so that $M$ moves symmetrically and chaotically between its
three states. The horizon is $T=5$ with no discounting. Finally, the costs are
$$ c(a) = a, \qquad K(a) = 2a, \qquad h = 0, \qquad \zeta = 0,$$
so that there are zero procurement/ordering costs and linear storage/stockout costs. With zero
ordering costs, the controller must consider the trade-off between \emph{under}stocking and
\emph{over}stocking. Since excess inventory cannot be disposed (and there are no final salvage
costs), overstocking leads to higher future storage costs; these are increasing in the horizon
as the demand may be low and the stock will be carried forward for a long period of time. On
the other hand, understocking is penalized by the stock-out penalty $K$. The probability of the
stock-out is highly sensitive to the demand distribution, so that the cost of understocking is
intricately tied to current belief $\vP$. Summarizing, as the horizon increases, the optimal
level of stock decreases, as the relative cost of overstocking grows. Thus, as the horizon
approaches the controller stocks up (since that is free to do) in order to minimize possible
stock-outs. Overall, we obtain a time- and belief-dependent basestock policy as in
\cite{TreharneSox}.

\begin{figure}
\begin{tabular*}{\textwidth}{lr}
\begin{minipage}{3in}
\centering{\includegraphics[height=2.4in,width=3in,clip]{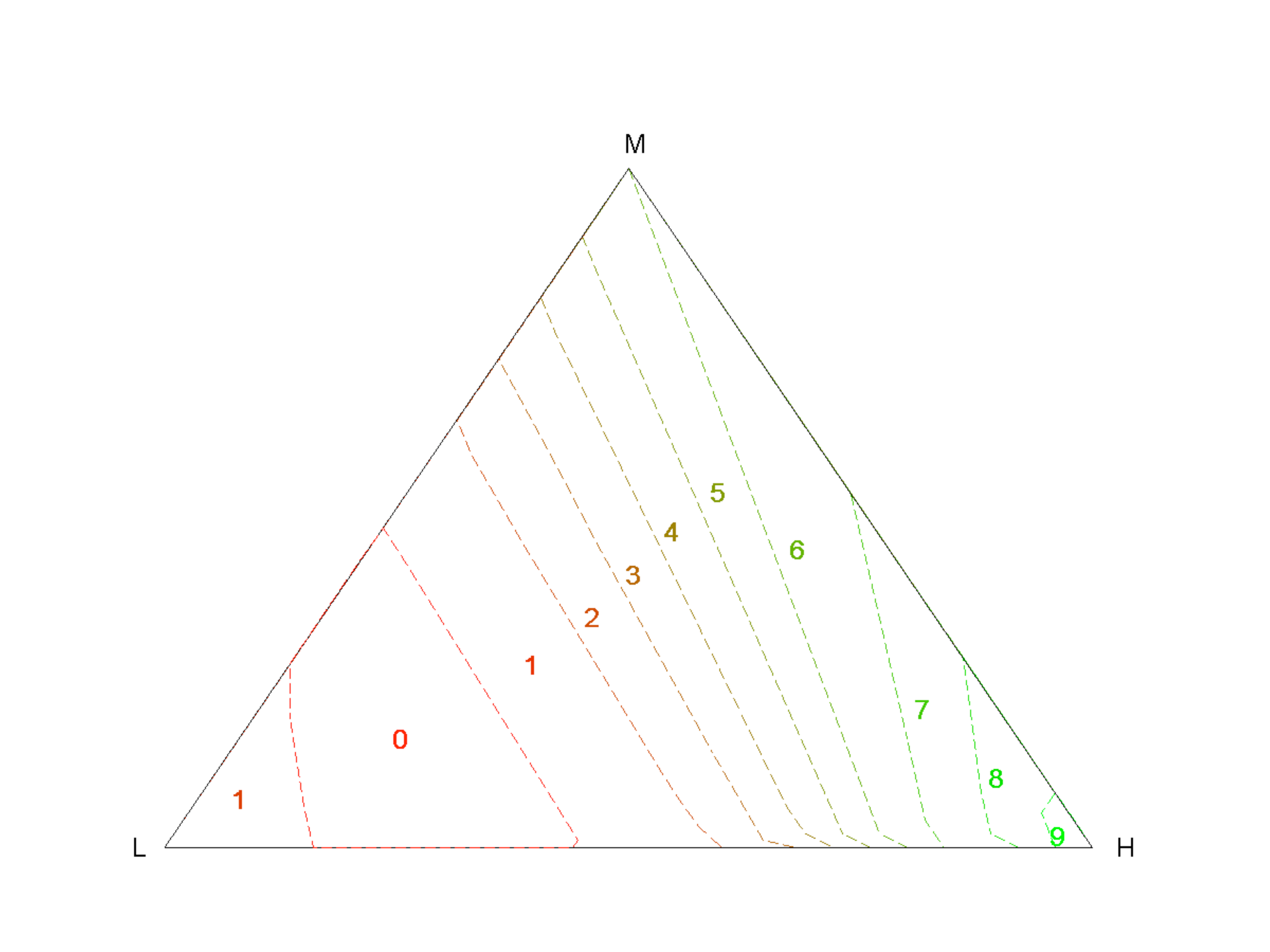}}

\end{minipage} &
\begin{minipage}{3in}
\centering{\includegraphics[height=2.4in,width=3in,clip]{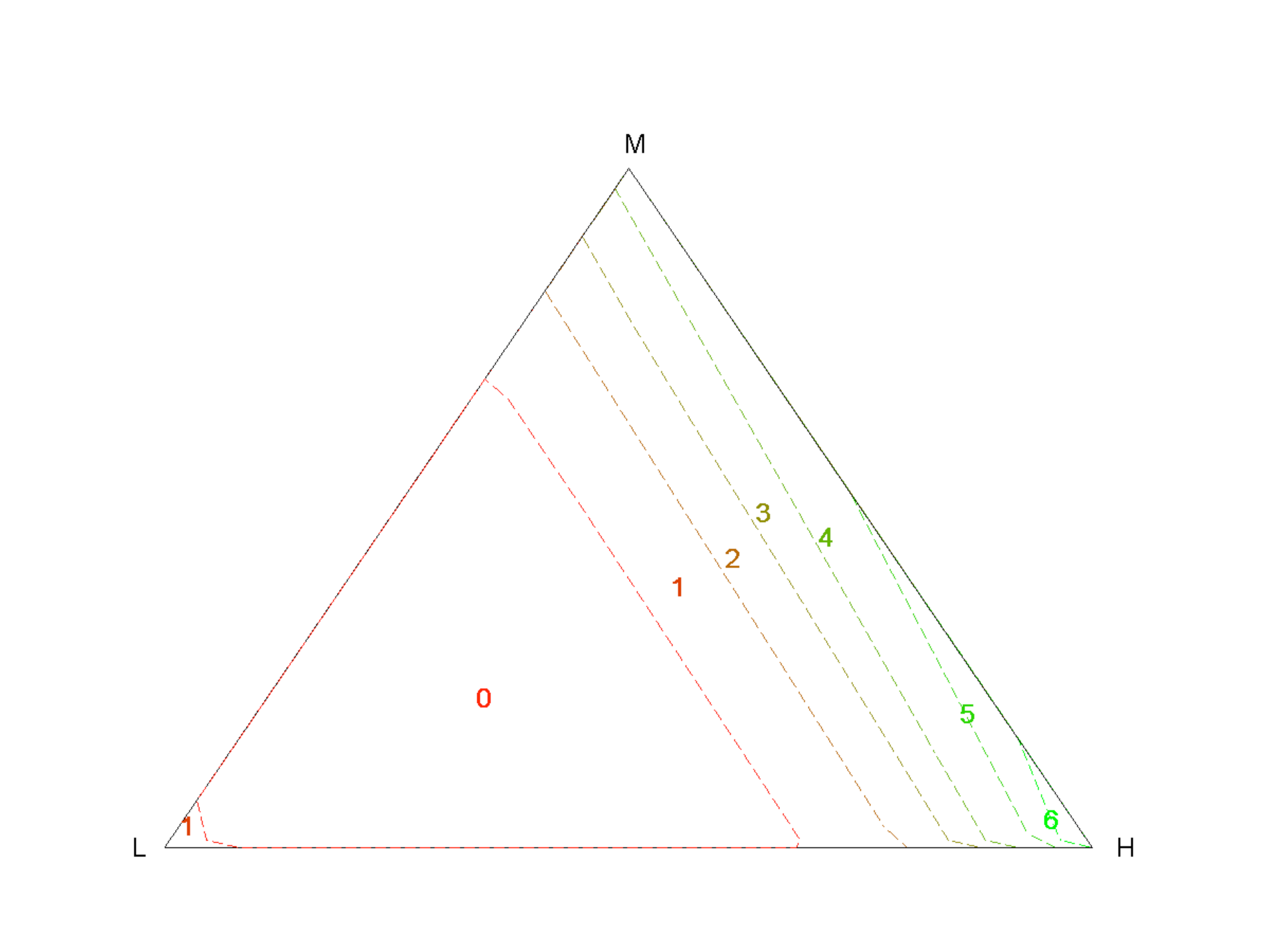}}

\end{minipage} \\
\begin{minipage}{3in}
\centering{\includegraphics[height=2.4in,width=3in,clip]{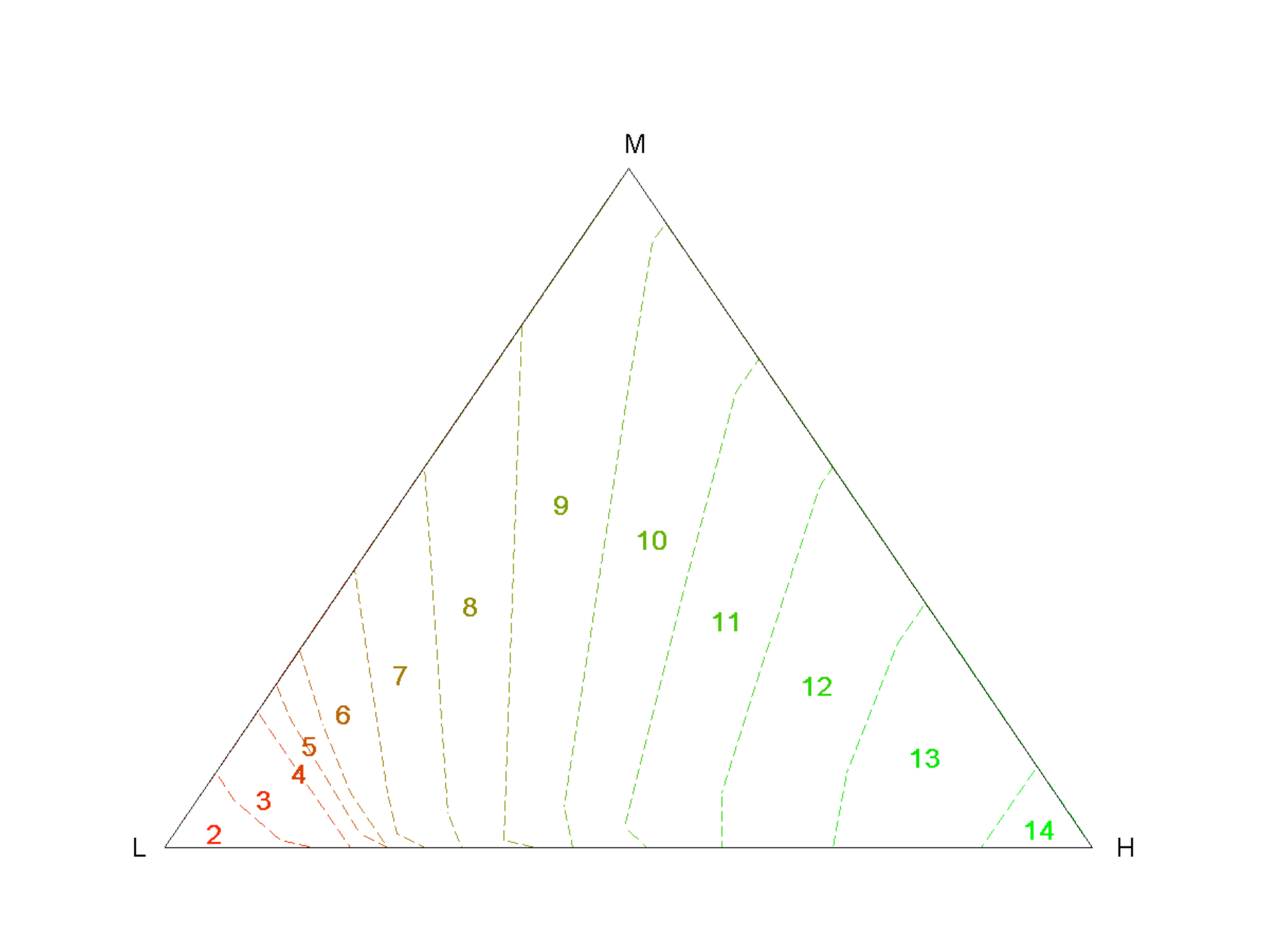}}

\end{minipage} &
\begin{minipage}{3in}
\centering{\includegraphics[height=2.4in,width=3in,clip]{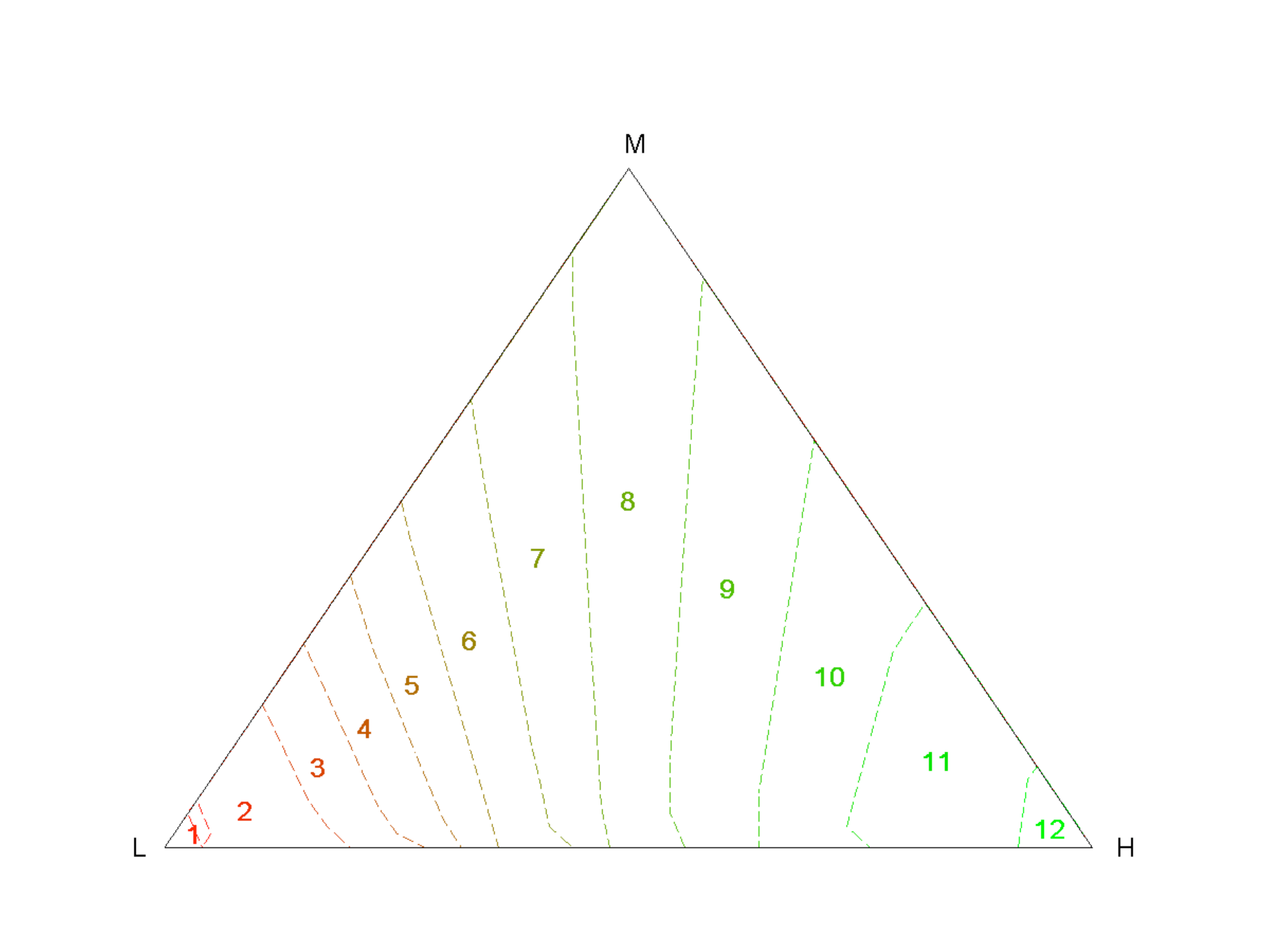}}

\end{minipage}
\end{tabular*}
\caption{Optimal inventory levels for different time horizons for Section \ref{sec:ts-example}.
We plot the regions of constancy for $d^*(T,\vp) = \argmax_a U(T,\vp, a)$, which is
the optimal inventory level to maintain given beliefs $\vp$ (since ordering costs are zero),
$\vp \in D=\{\pi_1+\pi_2+\pi_3 = 1\}$. Top panels: $K(a) = 2a$, bottom panels $K(a)=4a$; left
panels: $T=1$, right panels: $T=5$. \label{fig:ts-example}}
\end{figure}

Figure \ref{fig:ts-example} illustrates these phenomena as we vary the relative stockout costs
$K(a)$, and the remaining horizon. We show four panels where horizontally the horizon changes
and vertically the stock-out penalty $K$ changes. We observe that $K$ has a dramatic effect on
optimal inventory level (note that in this example ordering costs are zero, so the optimal
policy is \emph{only} driven by $K$ and $c$). Also note that the region where optimal policy is
$d^*(T,\vp) = 1$ is disjoint in the top two panels.

Figure \ref{fig:ts-regimes} again follows \cite{TreharneSox} and shows the effect of different
time dynamics of core process $M$. In the first case, we assume that demands are expected to
increase over time, so that the transition of $M$ follows the phases $1 \to 2 \to 3$. In that
case, it is possible that inventory will be increased even without any new events (i.e.\
$\tau_k \neq \sigma_\ell$). This happens because passage of time implies that the conditional
probability $\vP^{(3)}=\PP( M_t = 3|\F_t^{W})$ increases, and to counteract the corresponding increase in
probability of a stock-out, new inventory might be ordered. In the second case, we assume that
demand will be decreasing over time. In that case, the controller will order \emph{less}
compared to base case, since chances of overstocking will be increased.

\begin{figure}
\begin{tabular*}{\textwidth}{lr}
\begin{minipage}{3in}
\centering{\includegraphics[height=2.4in,width=3in,clip]{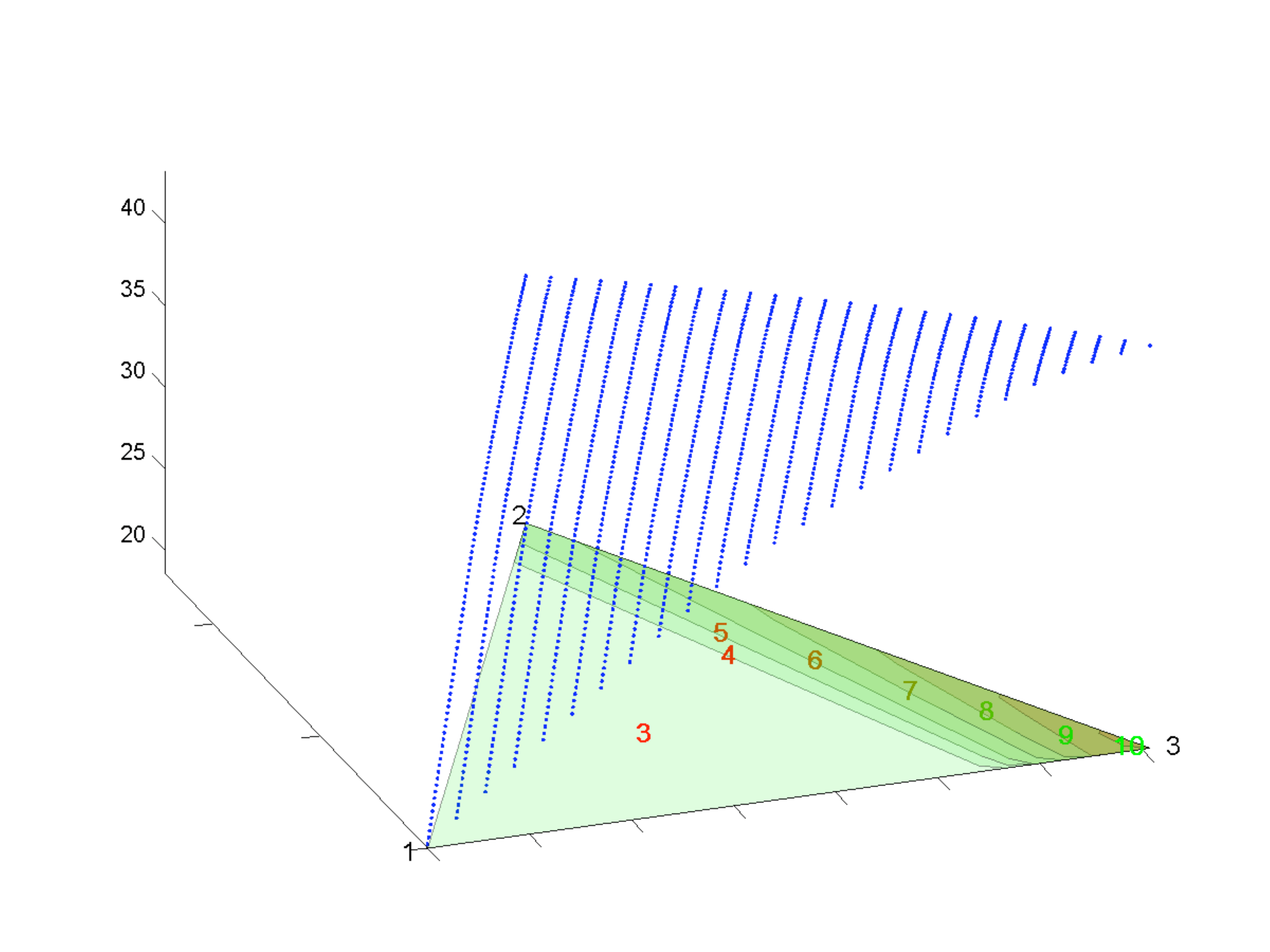}}

\end{minipage} &
\begin{minipage}{3in}
\centering{\includegraphics[height=2.4in,width=3in,clip]{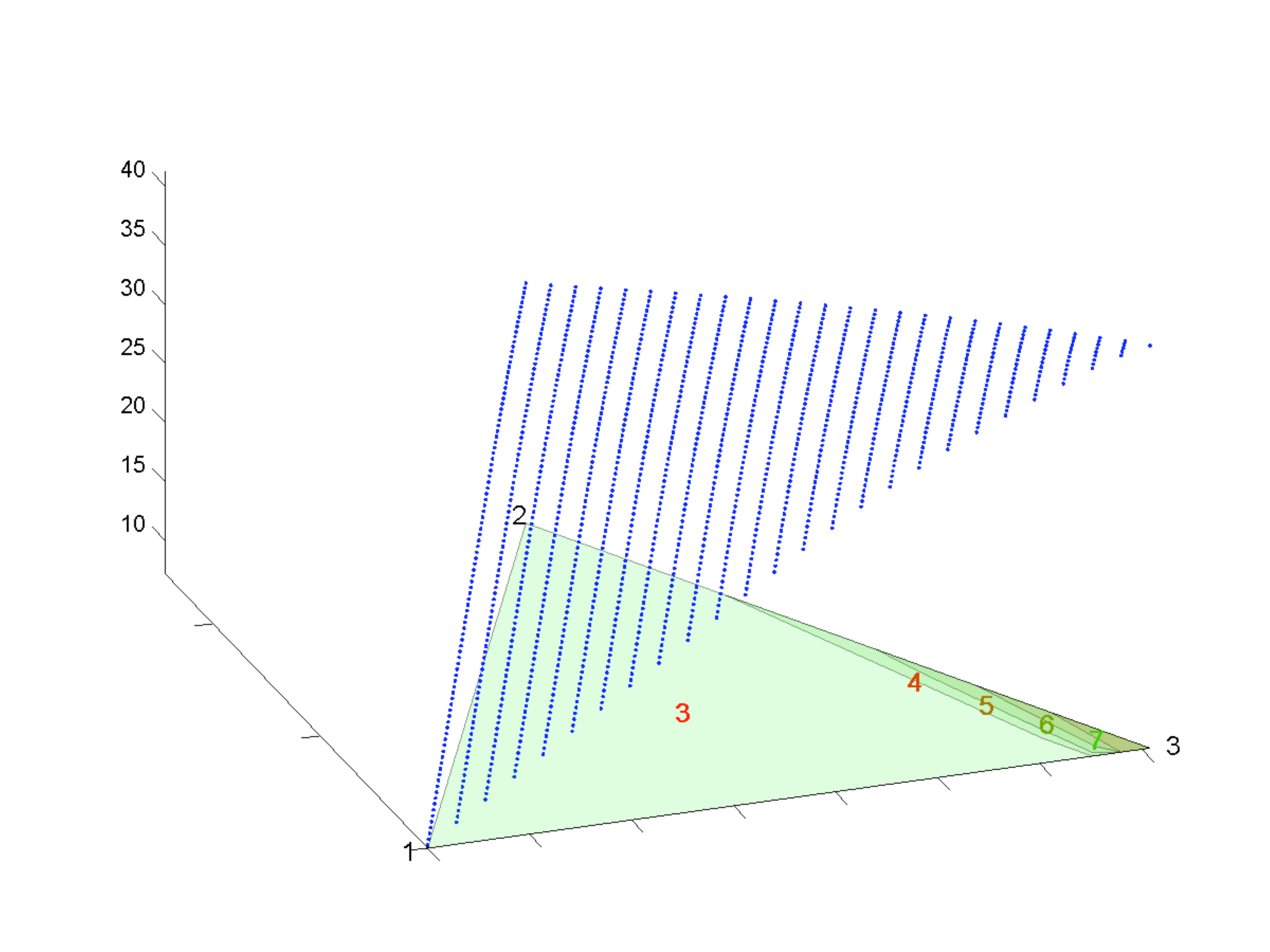}}

\end{minipage}
\end{tabular*}
\caption[Treharne-Sox Example]{Optimal inventory levels for different core process dynamics in
\cite{TreharneSox} (TS02) example of Section \ref{sec:ts-example}. The blue surfaces show
$U(T,\vp,3)$ over the triangle $\vp \in D=\{\pi_1+\pi_2+\pi_3 = 1\}$; underneath we show the
optimal inventory levels $d^*(T,\vp)$, see Figure \ref{fig:ts-example}.
Left panel (case US in TS02): $Q = \left(\begin{smallmatrix} -0.2 & 0.1 & 0.1 \\
0 & -0.2 & 0.2 \\
0 & 0 & 0 \end{smallmatrix} \right)$, right panel (case DS in TS02): $Q= \left(\begin{smallmatrix} 0 & 0 & 0 \\
0.2 & -0.2 & 0 \\
0.1 & 0.1 & -0.2 \end{smallmatrix} \right)$. (Note that in TS02 time is discrete. To be able to make a comparison we choose our generators to make the average holding time in each state equal to those of TS02.) \label{fig:ts-regimes}}
\end{figure}

\subsection{Example with Censoring}\label{sec:censoring-example} In our second example we
consider a model that treats censored observations. We assume that excess demand above
available stock is unobserved, and that a corresponding opportunity cost is incurred in case of
stock-out.

For parameters we choose
$$ Q = \begin{pmatrix}  -1 &  1 \\  1 &  -1 \end{pmatrix}, \qquad\vec\lambda = \begin{pmatrix}
2 \\ 1 \end{pmatrix}, \qquad \vec\nu = \begin{pmatrix} 0.5 & 0.4 & 0.1 \\
0.1 & 0.3 & 0.6 \end{pmatrix},
$$
so that demands are of size at most $R=3$. Note that in regime 2, demands are less frequent but
of larger size; also to distinguish between regimes it is crucial to observe the full demand
size.

The horizon is $T=3$ and costs are selected as
$$ c(a) = 2a, \qquad K(a) = 3.2a, \qquad h= 1.25, \qquad \zeta = 1,$$
with $\bar{P} = R = 3$. Again, we consider zero salvage value. These parameters have been
specially chosen to emphasize the effect of censoring.

We find that the effect of censoring on the value function $U$ is on the order of 3-4\% in this
example, see Table \ref{table:cens-ex} below. However, this obscures the fact that the optimal
policies are dramatically different in the two cases. Figure \ref{fig:censoring-regions}
compares the two optimal policies given that current inventory is empty. In general, as might
be expected, censored observations cause the manager to carry extra inventory in order to
obtain maximum information. However, counter-intuitively, there are also values of $t$ and
$\vp$ where censoring can lead to lower inventory (compared to no-censoring). We have observed
situations where censoring increases inventory costs by up to 15\%, which highlights the need
to properly model that feature (the particular example was included to showcase other features
we observe below).

\begin{figure}
\centering{\includegraphics[height=2.625in,width=7in,clip]{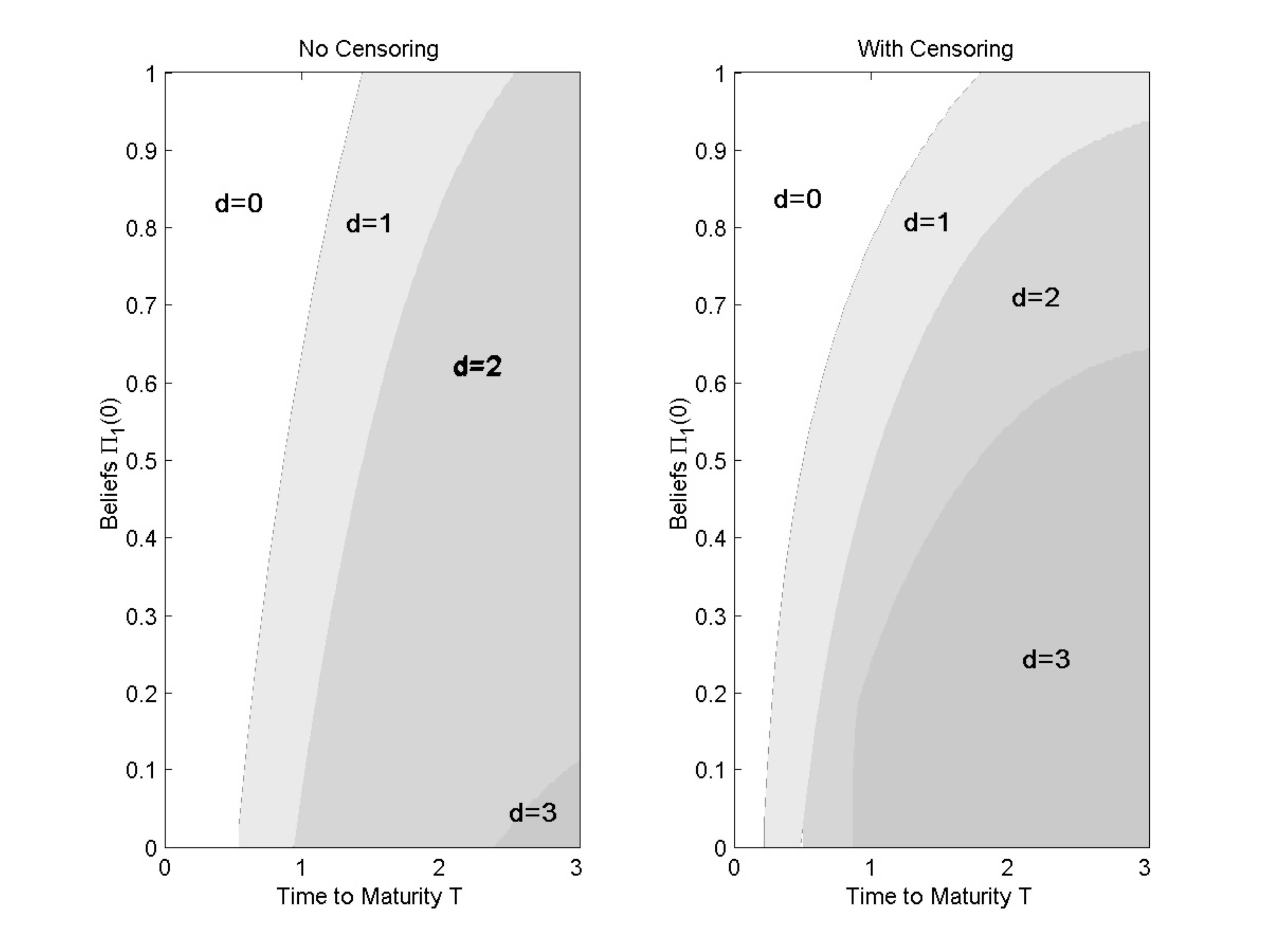}} \caption[Censoring
Example]{Optimal order levels as a function of time and beliefs given current empty inventory,
$P(0)=0$. We plot $d_{\M U}(T,\vp,0)$ as a function of time to maturity $T$ on the x-axis and
initial beliefs $\Pi_1(0) = \P(M_0=1)$ on the y-axis. \label{fig:censoring-regions}}
\end{figure}

\subsection{Optimal Strategy Implementation}

In Figure \ref{fig:optimal-strategy}, we present a sample path of the $(\vP, P)$-process which
shows the implementation of the optimal policy as defined in Figure
\ref{fig:censoring-regions}. We consider the above setting with censored observations, $T=3$
initial $\Pi_1(0) = 0.6$ (since $\Pi_2(t)\equiv 1-\Pi_1(t)$ in this one-dimensional example, we
focus on just the first component $\Pi_1(t) = \P(M_t=1)$) and initial zero inventory, $P(0)=0$.
In this example, it is optimal for the manager to place new orders only when inventory is
completely exhausted. Thus, $d_{\M U}(T,\vp,a)$ is non-trivial only for $a=0$; otherwise we
have $d_{\M U}(T,\vp,a)=a$ and the manager should just wait.

Since $d_{\M U}(3,\vP(0),0) = 3$, it is optimal for the manager to immediately put an order for
three units, as indicated by an arrow on the y-axis in Figure \ref{fig:optimal-strategy}. Then
the manager waits for demand orders, in the meantime paying storage costs on the three units on
inventory. At time $\sigma_1$, the first demand order (in this case of size two arrives). This
results in the update of the beliefs according to Proposition \eqref{cor:pdp} as
$$
\Pi_1(\sigma_1) = \frac{ \lambda_1 \cdot \nu_1(2) \cdot \Pi_1(\sigma_1-) }{\sum_{i=1}^2 \lambda_i
\cdot \nu_i(2) \cdot \Pi_i(\sigma_1-)} = \frac{2 \cdot 0.4 \cdot \Pi_1(\sigma_1-)}{2 \cdot 0.4
\cdot \Pi_1(\sigma_1-) + 1\cdot 0.3 \cdot (1-\Pi_1(\sigma_1-))}.
$$
\noindent This demand is fully observed and filled; since $d_{\M
U}(3-\sigma_1,\vP_{\sigma_1},1)=1$, no new orders are placed at that time. Then at time
$\sigma_2$ we assume that a censored demand (i.e.\ a demand of size more than 1 arrives). This
time the update in the beliefs is
$$
\Pi_1(\sigma_2) = \frac{ \lambda_1 \cdot [\nu_1(2)+\nu_1(3)] \cdot \Pi_1(\sigma_2-)
}{\sum_{i=1}^2 \lambda_i \cdot [\nu_i(2)+\nu_i(3)] \cdot \Pi_i(\sigma_2-)} = \frac{2 \cdot 0.5
\cdot \Pi_1(\sigma_2-)}{2 \cdot 0.5 \cdot \Pi_1(\sigma_2-) + 1\cdot 0.9 \cdot (1-\Pi_1(\sigma_2-))}.
$$
We see that the censored observation gives very little new information to the manager and
$\Pi_1(\sigma_2)$ is close to $\Pi_1(\sigma_2-)$. The inventory is now instantaneously brought
down to zero, as the one remaining unit is shipped out (and the rest is assigned an expected
lost opportunity cost). Because $d_{\M U}(T-\sigma_2, \vP(\sigma_2), 0)=1$, the manager
immediately orders one new unit of inventory, $\tau_1=\sigma_2$. Thus, overall we end up with
$P(\sigma_2) = 1$. At time $\sigma_3$, a single unit demand (uncensored) is observed; this
strongly suggests that $M_{\sigma_3}=1$ (due to short time between orders and a small order
amount), and $\Pi_1(\sigma_3)$ is indeed large. Given that little time remains till maturity,
it is now optimal to place no more new orders, $d_{\M U}(T-\sigma_3,\vP(\sigma_3),0)=0$.
However, as time elapses, $\P(M_t=2)$ grows and the manager begins to worry about incurring
excessive stock-out costs if a large order arrives. Accordingly, at time $\tau_2\sim 2.19$ (and
without new incoming orders) the manager places an order for a new unit of inventory as $(\vP,
P)$ again enters the region where $d_{\M U}=1$ (see lowermost panel of Figure
\ref{fig:optimal-strategy}). As it turns out in this sample, no new orders are in fact
forthcoming until $T$ and the manager will lose the latter inventory as there are no salvage
opportunities.

\begin{figure}
\centering{\includegraphics[height=3in,width=7.2in,clip]{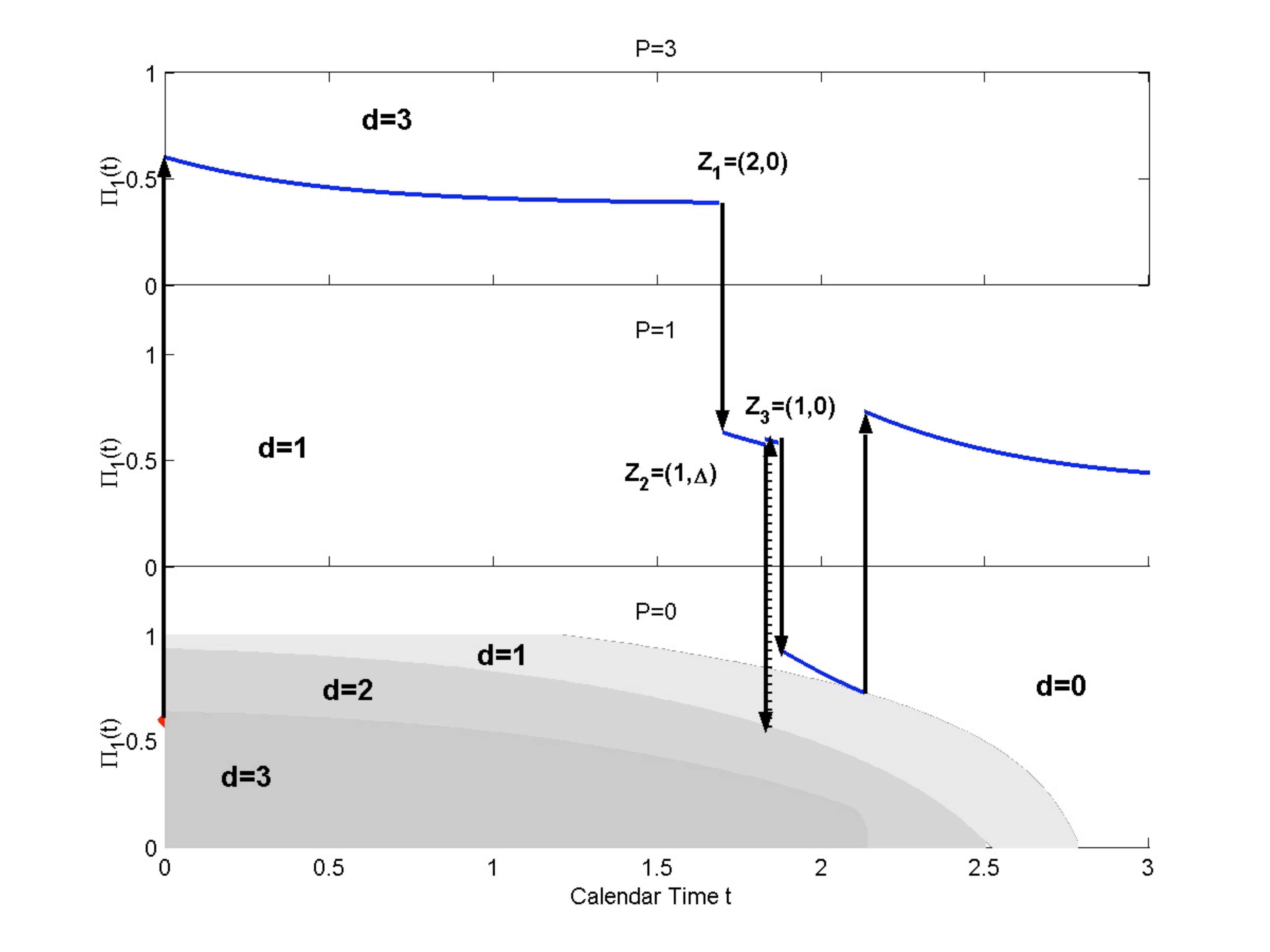}}
\caption{Implementation of optimal strategy on a sample path. We plot the beliefs
$\Pi_1(t)=\P(M_t=1)$ as a function of calendar time $t$, over the different panels
corresponding to $P(t)$. Incoming demand orders and placed supply orders are indicated with
circles and diamonds respectively. Here $T=3$ and $\vP(0) = [0.6, 0.4]$. The arrival times are
$\sigma_1=1.7, \sigma_2 = 1.83, \sigma_3 = 1.87$ and the corresponding marks are $Z_1 = (2,0),
Z_2=(1,\Delta), Z_3=(1,0)$, see Section \ref{sec:vP}. \label{fig:optimal-strategy}}
\end{figure}

\subsection{Effect of Other Parameters}\label{sec:param-effects}
We now compare the effects of other model parameters and ingredients on the value function and
optimal strategy. To give a concise summary of our findings, Table \ref{table:cens-ex} shows
the initial value function and initial optimal policy for one fixed choice of beliefs $\vP(0)$
and time-horizon $T$.

In particular, we compare the effect of censoring, as well as changes in various costs on the
value function and a representative optimal policy choice. We also study the effect of salvage
opportunities and possibility of selling inventory. Salvaging at x\% means that one adds the
initial condition $U(0,\vp,a) = - x \cdot h \cdot a$ to \eqref{def:U}, so that at the terminal
date the manager can recover some of the unit costs associated with unsold inventory. Selling
inventory means that at any point in time the manager can sell back unneeded inventory, so that
$\xi_k \in \{-P(\tau_k), \ldots, \bar{P}-P(\tau_k)\}$ and the minimization in
\eqref{eq:intervention-op} is over all $b \in [0,\bar{P}]$.

As expected, storage costs increase average costs and cause the manager to carry less
inventory; conversely stock-out costs cause the manager to carry more inventory (and also
increase the value function). Fixed order costs are also crucial and increase supply order
sizes, as each supply order is very costly to place. We find that in this example, the
possibility of salvaging inventory and opportunity to sell inventory have little impact on the
value function, which appears to be primarily driven by potential stock-out penalties.

\begin{table}
\begin{tabular}{l|c|c}
Model  & Value Function  $U(T, \vp, 0)$ & Optimal Policy $d_{\M U}(T,\vp,0)$ \\
\hline
Base case w/out censoring & 25.21 & 2 \\
Base case w/censoring &  25.97 & 3 \\
Reduced stockout costs $K(a)=2a$ & 16.37 & 0 \\
Zero storage costs $c=0$ & 16.71  & 3 \\
No fixed ordering costs & 22.92 & 1 \\
Terminal salvage value of 50\% &  24.75 & 2 \\
Can buy/sell at cost, $\zeta=1$ & 24.30  & 2 \\
\end{tabular}
\caption{Results for the comparative statics in Section \ref{sec:param-effects}. Here $T=3$ and
$\vP(0)=\vp=(0.5,0.5)$, so that $\P(M_0=1)=\P(M_0=2)=\frac{1}{2}$.
\label{table:cens-ex}}\end{table}

\section{Conclusion}
In this paper we have presented a probabilistic model that can address the setting of partially
observed non-stationary demand in inventory management. Applying the DP intuition, we have
derived and gave a direct proof the dynamic programming equation for the value function and the
ensuing characterization of an optimal policy. We have also derived an alternative
characterization that can be easily implemented on a computer. This gives a method to compute
the full value function/optimal policy to any degree of accuracy. As such, our model contrasts
with other approaches that only present heuristic policy choices.

Our model can also incorporate demand censoring. Our numerical investigations suggest that
censored demands may have a significant influence on the optimal value to extract and even a
more dramatic impact on the manager's optimal policies. This highlights the need to properly
model demand censoring in applications. It would be of interest to further study this aspect of
the model and to compare it with real-life experiences of inventory managers.

\appendix
\renewcommand{\thesection}{A}
\refstepcounter{section}
\makeatletter
\renewcommand{\theequation}{\thesection.\@arabic\c@equation}
\makeatother

\section{Proof of Proposition~\ref{sec:pfpdp}}

\subsection{A Preliminary Result}

\begin{lemm}
\label{lemm:vP-explicit} For $i \in E$, let us define
\begin{align}
\label{def:L} L_i^{\vp,a,\xi} ( t, r : (t_k, \vz_k ), k \le r) \triangleq \E^{\vp,a,\xi} \left[
1_{\{ M_t =i\}} \cdot\e^{ - I(t) } \cdot  \prod_{k=1}^r \ell(t_k, \vz_k) \right],
\end{align}
where
\begin{align}
\label{def:I} \ell(t, \vz) \triangleq \sum_{j \in E}  1_{\{ M_{t} =j \}} \lambda_j \cdot g_j
(\vz; P_{t-}).
\end{align}
Moreover let $L^{\vp,a,\xi} ( t, r : (t_k, \vz_k ), k \le r) = \sum_{j \in E} L_j^{\vp,a,\xi} (
t, r : (t_k, \vz_k ), k \le r)$. Then we have
\begin{align}
\label{vP-explicit} \Pi_i(t) &= \frac{L_i^{\vp,a,\xi} ( t, N_t : (\sigma_k, Z_k ), k \le N_t)
}{ L^{\vp} ( t, N_t : (\sigma_k, Z_k ), k \le N_t)} \equiv \left[  \frac{L_i^{\vp,a,\xi} ( t, r
: (t_k, \vz_k ), k \le r) }{ L^{\vp,a,\xi} ( t, r : (t_k, \vz_k ), k \le r)} \right]\Bigg|_{ r
= N_t \, ; \,  ( t_k=\sigma_k, \vz_k = Z_k )_{ k \le r} }
\end{align}
$ \P^{\vp}$-a.s., for all $t\ge 0$, and for $i  \in E$.
\end{lemm}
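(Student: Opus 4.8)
The plan is to identify $\Pi_i(t)$ through the abstract Bayes (Kallianpur--Striebel) formula. Set
\[
\widetilde\Pi_i(t)\;\triangleq\;\frac{L_i^{\vp,a,\xi}\big(t,N_t:(\sigma_k,Z_k),\,k\le N_t\big)}{L^{\vp,a,\xi}\big(t,N_t:(\sigma_k,Z_k),\,k\le N_t\big)}.
\]
Since $\Pi_i(t)$ is by \eqref{def:Pi-i} the $\P^{\vp,a,\xi}$-a.s.\ unique $\Fc^W_t$-measurable random variable satisfying $\E^{\vp,a,\xi}[1_{\{M_t=i\}}H]=\E^{\vp,a,\xi}[\Pi_i(t)H]$ for every bounded $\Fc^W_t$-measurable $H$, it suffices to show (a) that $\widetilde\Pi_i(t)$ is a bona fide $\Fc^W_t$-measurable random variable, and (b) that $\E^{\vp,a,\xi}[1_{\{M_t=i\}}H]=\E^{\vp,a,\xi}[\widetilde\Pi_i(t)H]$ for all such $H$; then the choice $H=\sgn(\Pi_i(t)-\widetilde\Pi_i(t))$ forces equality a.s., which is \eqref{vP-explicit}.

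First I would record the structure of $\Fc^W_t$: it is generated by $N_t$ together with $(\sigma_k,Z_k)_{k\le N_t}$, so by a monotone-class argument it is enough to verify the identity for $H=1_{\{N_t=r\}}\,h(\sigma_1,Z_1,\dots,\sigma_r,Z_r)$, with $r\in\N$ and $h$ bounded measurable and supported on $\{0<t_1<\dots<t_r\le t\}\times\cZ^r$. For the measurability point (a) I would use the piecewise-deterministic construction of Section 2.2: the impulse times $\tau_k$ are $\F^W$-stopping times and $\xi_k$ is $\Fc^W_{\tau_k}$-measurable, hence on $\{N_t=r,\ \sigma_j=t_j,\ Z_j=\vz_j\}$ the running inventory satisfies $P_{\sigma_\ell-}=p_\ell(t_1,\vz_1,\dots,t_{\ell-1},\vz_{\ell-1})$ for deterministic maps $p_\ell$. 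Consequently $\ell(\cdot,\cdot)$ in \eqref{def:I} (after integrating out $M$) and hence $L_i^{\vp,a,\xi}(t,r:(t_k,\vz_k),k\le r)$ from \eqref{def:L} are genuine deterministic functions of $(t,r,(t_k,\vz_k))$; evaluated at $(t,N_t,(\sigma_k,Z_k))$ they are $\Fc^W_t$-measurable, and the denominator is strictly positive $\P^{\vp,a,\xi}$-a.s.\ because it is the density of the observation on the support of its law (shown next).

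Next I would condition on the whole trajectory $\{M_s:s\le t\}$ and invoke \eqref{path-likelihood-given-M} (which holds verbatim with the $\xi$-controlled inventory $P$ in place of the uncontrolled one, since an impulse only shifts $P$ and does not affect the arrival times of $W$ or the conditional law of the marks given $M$): given $\{M_s:s\le t\}$, the law of $(N_t,(\sigma_k,Z_k)_{k\le N_t})$ restricted to $\{N_t=r\}$ has density $\e^{-I(t)}\prod_{\ell=1}^r\ell(t_\ell,\vz_\ell)$ with respect to Lebesgue measure on $\{0<t_1<\dots<t_r\le t\}$ times counting measure on $\cZ^r$. Using $g_j\le 1$ and $\lambda_j\le\bar\lambda$ one has $\sum_{\vz}\ell(\cdot,\vz)\le\bar\lambda$, so Fubini applies, and
\begin{align*}
\E^{\vp,a,\xi}\big[1_{\{M_t=i\}}H\big]
&=\E^{\vp,a,\xi}\Big[1_{\{M_t=i\}}\,\e^{-I(t)}\!\!\int_{\{0<t_1<\dots<t_r\le t\}}\ \sum_{\vz_1,\dots,\vz_r\in\cZ}h(t_1,\vz_1,\dots,t_r,\vz_r)\prod_{\ell=1}^r\ell(t_\ell,\vz_\ell)\,dt_1\cdots dt_r\Big]\\
&=\int_{\{0<t_1<\dots<t_r\le t\}}\ \sum_{\vz_1,\dots,\vz_r\in\cZ}h(t_1,\vz_1,\dots,t_r,\vz_r)\,L_i^{\vp,a,\xi}\big(t,r:(t_k,\vz_k),\,k\le r\big)\,dt_1\cdots dt_r,
\end{align*}
where the last step just exchanges the expectation over $M$ with the integral over the observation variables and recalls the definition \eqref{def:L}. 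Summing over $i\in E$ gives the same formula with $L^{\vp,a,\xi}$ replacing $L_i^{\vp,a,\xi}$; in particular $L^{\vp,a,\xi}(t,r:\cdot)$ is the (unconditional) density of the observation on $\{N_t=r\}$.

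Finally, multiplying and dividing by this density,
\[
\E^{\vp,a,\xi}\big[\widetilde\Pi_i(t)H\big]=\int \sum_{\vz}h\cdot\frac{L_i^{\vp,a,\xi}}{L^{\vp,a,\xi}}\cdot L^{\vp,a,\xi}=\int\sum_{\vz}h\cdot L_i^{\vp,a,\xi}=\E^{\vp,a,\xi}\big[1_{\{M_t=i\}}H\big],
\]
which is (b), and the conclusion follows as explained. I expect the main obstacle to be the bookkeeping in the second and third paragraphs: carefully replacing the random $P_{\sigma_\ell-}$ by its $\F^W$-predictable deterministic representation, and checking that the conditional density \eqref{path-likelihood-given-M} is valid for the $\xi$-controlled (not merely uncontrolled) marked point process; once that is in place, the remaining computation is a routine Fubini-plus-Bayes argument.
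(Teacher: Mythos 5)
Your proposal is correct and follows essentially the same route as the paper: both verify the defining property of the conditional expectation on a generating class of $\Fc^W_t$ (monotone class), condition on the whole path of $M$ to invoke the likelihood formula \eqref{path-likelihood-given-M}, and then use Fubini together with a multiply-and-divide by $L^{\vp,a,\xi}$ (the observation density) to identify $\Pi_i(t)$ with $L_i^{\vp,a,\xi}/L^{\vp,a,\xi}$. The only cosmetic difference is your choice of test functions $1_{\{N_t=r\}}h(\sigma_1,Z_1,\dots,\sigma_r,Z_r)$ versus the paper's cylinder events in $(N_{t_1},\dots,N_{t_k},Z_1,\dots,Z_{r_k})$, and you make explicit the measurability/positivity bookkeeping that the paper leaves implicit.
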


\begin{proof}
Let $\Xi$ be a set of the form
\begin{align*}
{\Xi} = \{  N_{t_1} = r_1, \ldots, N_{t_k} = r_k ; (Z_1,\ldots, Z_{r_k}) \in B  \}
\end{align*}
where $0 = t_0 \le t_1 \le \ldots \le t_k = t $ with $0 \le r_1 \le \ldots \le r_k$ for $k \in
\N$, and $B$ is a Borel set in $\mathcal{B}(\R^{r_k})$. Since $t_j$ and $r_j$'s are arbitrary,
to prove (\ref{vP-explicit}) by the Monotone Class Theorem it is then sufficient to establish
\begin{align*}
\E^{\vp,a,\xi} \left[ 1_{\Xi} \cdot \P^{\vp} \{ M_t =i | \Fc^W_t \} \right] = \E^{\vp,a,\xi}
\left[ 1_{\Xi} \cdot \frac{L_i^{\vp,a,\xi} ( t, N_t : (\sigma_k, Z_k ), k \le N_t) }{
L^{\vp,a,\xi} ( t, N_t : (\sigma_k, Z_k ), k \le N_t)} \right].
\end{align*}
Conditioning on the path of $M$, the left-hand side (LHS) above equals
{\small
\begin{multline*}
LHS  = \E^{\vp,a,\xi} \left[ 1_{\{M_t = i\}} \P^{\vp} \left\{ N_{t_1} = r_1, \ldots, N_{t_k} = r_k ; (Z_1, \ldots, Z_{r_k}) \in B \, \Big| M_s , \, s \le t\ \right\}  \right] =\\
 \E^{\vp,a,\xi} \left[ 1_{\{M_t = i\}}  \int_{B \times \Upsilon (t_1, \dotsc, t_k)} \P^{\vp} \left\{ \sigma_1 \in ds_1, \ldots, \sigma_{r_k} \in d s_{r_k} ; Z_1 = \vz_1, \ldots, Z_{r_k} = \vz_{r_k} \, \Big| M_s, \, s \le t\ \right\}  \right]
\end{multline*}}
where
\begin{align*}
\Upsilon (t_1, \dotsc, t_k) = \left\{ s_1, \ldots, s_{r_k} \in \R_+^{r_k}: \; s_1\le \ldots \le
s_{r_k} \le t \; \; \text{and}\; \; s_{r_j} \le t_j < s_{r_j+1} \; \text{for}\;  j=1, \ldots k
\right\}.
\end{align*}
Then, using \eqref{path-likelihood-given-M} and Fubini's theorem we obtain
\begin{align*}
LHS & =\E^{\vp,a,\xi} \left[ 1_{\{M_t = i\}}\int_{B \times \Upsilon (t_1, \dotsc, t_k)} \e^{-I(t)}  \prod_{\ell=1}^{r_k} \sum_{j \in E }  1_{\{M_{s_\ell} = j\}} \lambda_j g_j(\vz_\ell; P_{s_\ell}) \,  ds_\ell \,   \right] \\
& =\int_{B \times \Upsilon (t_1, \dotsc, t_k)} \E^{\vp,a,\xi} \left[ 1_{\{M_t = i\}} \e^{-I(t)}  \prod_{\ell=1}^{r_k} \sum_{j \in E }  1_{\{M_{s_\ell} = j\}} \lambda_j g_j(\vz_\ell; P_{s_\ell})   \right] \prod_{\ell=1}^{r_k}ds_\ell \\
& =\int_{B \times \Upsilon (t_1, \dotsc, t_k)}  L_i^{\vp,a,\xi} ( t, r_k : (s_j, \vz_j ), j \le r_k) \prod_{\ell=1}^{r_k}ds_\ell \\
& =\int_{B \times \Upsilon (t_1, \dotsc, t_k)} \frac{L_i^{\vp,a,\xi} ( t, r_k : (s_j, \vz_j ), j \le r_k) }{ L^{\vp,a,\xi} ( t, r_k : (\sigma_j, Y_j ), j \le r_k) } \cdot  L^{\vp,a,\xi} ( t, r_k : (s_j, \vz_j ), j \le r_k) \prod_{\ell=1}^{r_k}ds_\ell \\
& =\int_{B \times \Upsilon (t_1, \dotsc, t_k)} \frac{L_i^{\vp,a,\xi} ( \ldots) }{ L^{\vp,a,\xi}
( \ldots)} \cdot     \E^{\vp} \left[ \sum_{i \in E }1_{\{M_t = i\}} \e^{-I(t)}
\prod_{\ell=1}^{r_k} \sum_{j \in E }  1_{\{M_{s_\ell} = j\}} \lambda_j g_j(\vz_l; P_{s_\ell})
\right] \prod_{\ell=1}^{r_k}ds_\ell.
\end{align*}
By another application of Fubini's theorem, we obtain $LHS =$
\begin{align*}
&\E^{\vp,a,\xi} \left[ \sum_{i \in E }1_{\{M_t = i\}}  \int_{B \times \Upsilon (t_1, \dotsc,
t_k)} \frac{L_i^{\vp,a,\xi} ( t, r_k : (s_j, \vz_j ), j \le r_k) }{ L^{\vp,a,\xi} ( t, r_k :
(\sigma_j, \vz_j ), j \le r_k) } \cdot    \e^{-I(t)}  \prod_{\ell=1}^{r_k} \sum_{j \in E }
1_{\{M_{s_\ell} = i\}} \lambda_j g_j(\vz_\ell; P_{s_\ell})
\cdot \prod_{\ell=1}^{r_k}ds_\ell \right] \\
&=\E^{\vp,a,\xi} \left[ \sum_{i \in E }1_{\{M_t = i\}}  \E^{\vp,a,\xi} \left[ 1_{\{  N_{t_1} =
r_1, \ldots, N_{t_k} = r_k ; (Z_1,\ldots, Z_{r_k}) \in B  \}} \cdot \frac{L_i^{\vp,a,\xi} ( t,
N_t : (\sigma_j, Z_j ), j \le N_t) }{ L^{\vp,a,\xi} ( t, N_t : (\sigma_j, Z_j ), j \le N_t) }
\;
\Bigg| M_s ; \, s \le t \right] \right] \\
&=\E^{\vp,a, \xi} \left[   \E^{\vp,a,\xi} \left[ 1_{\{  N_{t_1} = r_1, \ldots, N_{t_k} = r_k ;
(Z_1,\ldots, Z_{r_k}) \in B  \}} \cdot \frac{L_i^{\vp,a,\xi} ( t, N_t : (\sigma_j, Z_j ), j \le
N_t) }{ L^{\vp,a,\xi} ( t, N_t : (\sigma_j, Z_j ), j \le N_t) }
\; \Bigg| M_s ; \, s \le t \right] \right] \\
&=\E^{\vp,a,\xi} \left[   1_{\Xi} \cdot \frac{L_i^{\vp,a,\xi} ( t, N_t : (\sigma_j, Z_j ), j
\le N_t) }{ L^{\vp,a,\xi} ( t, N_t : (\sigma_j, Z_j ), j \le N_t) }
 \right].
\end{align*}
\end{proof}
\subsection{Proof of Proposition~\ref{cor:pdp} }\label{sec:pfpdp}

Let $\E^{a,\xi}_j[\cdot]$ denote the expectation operator $\E^{\vp,a,\xi}[\cdot \, | M_0 =j] $,
and let $t_r \le t \le t+ u $, then
\begin{multline}
\label{eq:derivation-of-vx} L_i^{\vp,a,\xi} ( t+u  , r : (t_k, \vz_k ), k \le r) = \sum_{j \in
E} \pi_j \cdot \E^{a,\xi}_j \left[ 1_{\{ M_{t+u} =i\}} \cdot\e^{ - I(t+u) } \cdot \prod_{k=1}^r
\ell(t_k, \vz_k)
\right] \\
\begin{aligned}
=& \sum_{j \in E} \pi_j  \cdot \E^{a,\xi}_j \left[ \E^{a,\xi}_j \Biggl[ 1_{\{ M_{t+u} =i\}}
\cdot\e^{ - I(t+u) } \cdot  \prod_{k=1}^r \ell(t_k, \vz_k) \,
\Bigg| M_s, s\le t\Biggr]   \right] \\
=& \sum_{j \in E} \pi_j  \cdot \E^{a,\xi}_j \left[\e^{ - I(t) } \left( \prod_{k=1}^r \ell(t_k,
\vz_k) \right) \E^{a,\xi}_j \left[ 1_{\{ M_{t+u} = i \}} \cdot\e^{ - (I(t+u)-I(t) )}  \Bigg|
M_t \right] \right] ,
\end{aligned}
\end{multline}
where the third equality followed from the properties of the conditional expectation and the Markov property of $M$.
The last expression in \eqref{eq:derivation-of-vx} can be
written as
\begin{multline*}
\begin{aligned}
 =& \sum_{j \in E} \pi_j
\cdot \E^{a,\xi}_j \left[\e^{ - I(t) } \left( \prod_{k=1}^r \ell(t_k, \vz_k) \right) \cdot
\sum_{l \in E} 1_{\{ M_{t} =l\}} \cdot \E^{a,\xi}_l \left[
1_{\{ M_{u} =i\}} \cdot\e^{ - I(u)}   \right]   \right] \\
=& \sum_{l \in E}  \E^{a,\xi}_l \left[ 1_{\{ M_{u} =i\}} \cdot\e^{ - I(u)}   \right] \cdot
\E^{\vp} \left[ 1_{\{ M_{t} =l\}}  \cdot\e^{ - I(t) } \prod_{k=1}^r \ell(t_k, \vz_k)
 \right] \\
=& \sum_{l \in E}  \E^{a,\xi}_l \left[ 1_{\{ M_{u} =i\}} \cdot\e^{ - I(u)}   \right] \cdot
L_l^{\vp} ( t  , r : (t_k, \vz_k ), k \le r).
\end{aligned}
\end{multline*}
Then the explicit form of $\vP$ in (\ref{vP-explicit}) implies that for $\sigma_m \le t \le t+
u < \sigma_{m+1}$, we have
\begin{align}
\label{eq:semi-group}
\begin{aligned}
\Pi_i (t+u) &= \frac{\sum_{l \in E}  L_l^{\vp} ( t  , r : (\sigma_k, \vz_k ), k \le r) \cdot
\E^{a,\xi}_l \left[ 1_{\{ M_{u} =i\}} \cdot\e^{ - I(u)}   \right]  } {\sum_{j \in E} \sum_{l
\in E} L_{l}^{\vp} ( t  , r : (\sigma_k, \vz_k ), k \le r) \cdot  \E^{a,\xi}_l \left[
1_{\{ M_{u} =j\}} \cdot\e^{ - I(u)}   \right] } \\
&= \frac{\sum_{l \in E}  \Pi_l(t) \cdot \E^{a,\xi}_l \left[ 1_{\{ M_{u} =i\}} \cdot\e^{ - I(u)} \right]
} {\sum_{j \in E} \sum_{l \in E}  \Pi_{l}(t) \cdot  \E^{a,\xi}_l \left[ 1_{\{ M_{u} =j\}} \cdot\e^{ -
I(u)}   \right] } =  \frac{  \E^{\vP_t} \left[ 1_{\{ M_{u} =i\}} \cdot\e^{ - I(u)} \right]  }
{\sum_{j \in E}  \E^{\vP_t} \left[
1_{\{ M_{u} =j\}} \cdot\e^{ - I(u)}   \right] } \\
&=  \frac{  \E^{\vP_t} \left[ 1_{\{ M_{u} =i\}} \cdot\e^{ - I(u)}   \right]  } {\E^{ \vP_t }
\left[  \e^{ - I(u)}   \right] } = \frac{    \P^{\vp}  \{ \sigma_1 > u , M_u =i  \}  }
 {   \P^{\vp}  \{ \sigma_1 > u  \} }\Bigg|_{\vp = \vP_t}.
\end{aligned}
\end{align}
On the other hand, the expression in (\ref{def:L}) implies
\begin{multline}
\label{eq:derivation-of-jump-part} L_i^{\vp,a,\xi} ( \sigma_{r+1} , r+1 : (\sigma_k, Z_k ), k
\le r+1)
 = \E^{\vp,a,\xi} \left[
1_{\{ M_t =i\}} \cdot\e^{ - I(t) } \cdot  \prod_{k=1}^{r+1} \ell(t_k, \vz_k)
 \right] \Bigg|_{ t = \sigma_{r+1} \, ; \,  (t_k=\sigma_k, y_k = Z_k )_{k \le r+1}   } \\
\begin{aligned}
= \lambda_i g_i(Z_{r+1}; P_{\sigma_{r+1}-}) \cdot \E^{\vp,a,\xi} \left[ 1_{\{ M_t =i\}}
\cdot\e^{ - I(t) } \cdot \prod_{k=1}^r \ell(t_k, \vz_k) \right] \Bigg|_{ t = \sigma_{r+1} \, ;
\, (t_k=\sigma_k, \vz_k = Z_k )_{ k \le r } }.
\end{aligned}
\end{multline}
Note that for fixed time $t$, we have $M_t = M_{t-}$, $\P^{\vp,a,\xi}$-a.s. and $L_i^{\vp,a,xi}
(t , r : (t_k, \vz_k ), k \le r)= L_i^{\vp,a,\xi} (t- , r : (t_k, \vz_k ), k \le r)$ when $t_r
< t$. Then we obtain
\begin{align}\label{eq:jumps-of-vP}
L_i^{\vp,a,\xi} ( \sigma_{r+1} , r+1 : (\sigma_k, Z_k ), k \le r+1) = \lambda_i g_i(Z_{r+1};
P_{\sigma_{r+1}-}) \cdot L_i^{\vp,a,\xi} ( \sigma_{r+1}- , r : (\sigma_k, Z_k ), k \le r),
\end{align}
due to \eqref{eq:derivation-of-jump-part}. Hence we conclude that at arrival times $\sigma_1,
\sigma_2, \ldots$ of $Z$, the process $\vP$ exhibits a jump behavior and satisfies the
recursive relation
\begin{align*}
\begin{aligned}
\Pi_i (\sigma_{r+1} ) &= \frac{ \lambda_i g_i(Z_{r+1}; P_{\sigma_{r+1}-})  L_i^{\vp} (
\sigma_{r+1}- , r : (\sigma_k, Z_k ), i \le r) } { \sum_{j \in E} \lambda_j g_j(Z_{r+1};
P_{\sigma_{r+1}-}) L_j^{\vp} ( \sigma_{r+1}- , r : (\sigma_k, Z_k ), k \le r) }  \\
& =\frac{ \lambda_i g_i(Z_{r+1}; P_{\sigma_{r+1}-}) \Pi_i (\sigma_{r+1} - ) }
{ \sum_{j \in E} \lambda_j g_j(Z_{r+1}; P_{\sigma_{r+1}-})  \Pi_j (\sigma_{r+1} - ) }
\end{aligned}
\end{align*}
for $r \in \N $.

\appendix
\renewcommand{\thesection}{B}
\refstepcounter{section}
\makeatletter
\renewcommand{\theequation}{\thesection.\@arabic\c@equation}
\makeatother

\section{Analysis Leading to the Proof of Proposition~\ref{prop:dp}}\label{sec:analysis}

\subsection{Preliminaries}\label{sec:analysis1}

Let us consider the following restricted version of (\ref{def:U})
\begin{equation}\label{eq:aux}
U_n(T,\vp,a)\triangleq
 \inf_{ \xi \in \U_n(T)} J^{\xi}(T,\vp,a), \qquad n \ge 1,
\end{equation}
in which $\U_n(T)$ is a subset of $\U(T)$ which contains strategies with at most $n \ge 1$
supply orders up to time $T$.

The following proposition shows that the value functions $(U_n)_{n \in \mathbb{N}}$ of
\eqref{eq:aux} which correspond to the restricted control problems over $\U_n(T)$ can be
alternatively obtained via the sequence of iterated optimal stopping problems in
\eqref{defn:Un}.
\begin{prop}\label{prop:UneqVn}
$U_n=V_n$ for $n \in \mathbb{N}$.
\end{prop}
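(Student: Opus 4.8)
The plan is to prove $U_n = V_n$ by induction on $n$, using the strong Markov property of the piecewise deterministic process $(\vP, P)$ at the first supply order time. The base case $n=0$ is immediate: $U_0 = V_0$ by definition (a strategy with no supply orders is exactly the no-action strategy of \eqref{def:U-0}). For the inductive step, assume $U_n = V_n$ and show $U_{n+1} = V_{n+1} = \G V_n$.

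For the inequality $U_{n+1} \le \G V_n$, I would take an arbitrary $\F$-stopping time $\tau \in \s(T)$ together with the $\F^W_\tau$-measurable supply amount prescribed by the intervention operator, namely $b = d_{\M V_n}(T-\tau, \vP_\tau, P_\tau)$, and then follow this first supply order by an (almost) optimal strategy in $\U_n(T-\tau)$ starting from the post-order state $(T-\tau, \vP_\tau, P_\tau + b - P_\tau)$. Concatenating these gives an admissible strategy in $\U_{n+1}(T)$, and decomposing its cost \eqref{eq:policy-performance} over $[0,\tau)$ and $[\tau, T]$, the strong Markov property of $(\vP, P)$ shows the cost equals the expression inside $\G$ applied to $V_n$ (up to the $\eps$ used to make the tail strategy near-optimal). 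Taking the infimum over $\tau$ and letting $\eps \to 0$ gives $U_{n+1} \le \G V_n = V_{n+1}$.

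For the reverse inequality $U_{n+1} \ge \G V_n$, I would take an arbitrary $\xi = (\tau_1, \xi_1, \tau_2, \xi_2, \ldots) \in \U_{n+1}(T)$. If $\xi$ places no order at all, its cost is at least $U_0 = V_0 \ge V_{n+1}$ by monotonicity of $\G$. Otherwise, let $\tau_1$ be the first order time; conditionally on $\F^W_{\tau_1}$, the continuation of $\xi$ after $\tau_1$ is an admissible strategy in $\U_n(T - \tau_1)$ for the problem started at $(T-\tau_1, \vP_{\tau_1}, P_{\tau_1})$, so by the inductive hypothesis the conditional continuation cost is bounded below by $U_n(T-\tau_1, \vP_{\tau_1}, P_{\tau_1}) = V_n(T-\tau_1, \vP_{\tau_1}, P_{\tau_1})$. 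Adding the storage and stock-out costs accrued on $[0, \tau_1)$ and the supply cost $h\xi_1 + \zeta \ge h(b - P_{\tau_1-}) + \zeta$ with $b = P_{\tau_1}$, and bounding below by $\M V_n(T-\tau_1, \vP_{\tau_1}, P_{\tau_1})$, shows that the cost of $\xi$ is at least the expression inside $\G$ evaluated at the admissible stopping time $\tau_1$; since $\tau_1 \ge \G V_n$ as a candidate, we get $J^\xi(T,\vp,a) \ge \G V_n(T,\vp,a) = V_{n+1}(T,\vp,a)$. Taking the infimum over $\xi \in \U_{n+1}(T)$ finishes the step.

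The main obstacle I anticipate is the careful bookkeeping in the measurability and conditioning arguments: verifying that the post-$\tau$ continuation of an $\F^W$-admissible strategy really is, conditionally, an admissible strategy for the shifted problem (this uses that $(\vP, P)$ is strong Markov, as established after Proposition~\ref{cor:pdp}, and the ``separated'' formulation that makes $(\vP, P)$ a sufficient statistic), and that the cost decomposition \eqref{eq:policy-performance} splits cleanly across $\tau$ with the discount factor $\e^{-\rho\tau}$ factoring out. Since these are the standard dynamic-programming manipulations for impulse control of piecewise deterministic Markov processes, I would only sketch them and refer to \cite{davis93,CostaDavis89} and \cite{baylud08} for the routine details, as the statement of Lemma~\ref{prop:vnconvU} (which this proposition feeds into) indicates.
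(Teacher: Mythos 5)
Your proposal is correct and follows essentially the same route as the paper's proof: induction on $n$, with the inequality $U_{n+1}\le V_{n+1}$ obtained by concatenating an ($\eps$-optimal) stopping time and the intervention $d_{\M V_n}$ with an $\eps$-optimal strategy in $\U_n$, and the inequality $U_{n+1}\ge V_{n+1}$ obtained by splitting an arbitrary (in the paper, $\eps$-optimal) strategy in $\U_{n+1}$ at its first order time $\tau_1$, invoking the strong Markov property of $(\vP,P)$, the induction hypothesis, and the bound $V_n(\cdot,\cdot,P_{\tau_1-}+\xi_1)+h\xi_1+\zeta\ge \M V_n(\cdot,\cdot,P_{\tau_1-})$. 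Only cosmetic differences remain (e.g.\ your $\M V_n$ should be evaluated at the pre-order inventory $P_{\tau_1-}$, and the paper fixes $\eps$-optimal objects first rather than taking infima at the end), none of which affects the argument.
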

\begin{proof}
By definition we have that $U_0=V_0$. Let us assume that $U_n=V_n$ and show that
$U_{n+1}=V_{n+1}$. We will carry out the proof in two steps.

\textbf{Step 1}. First we will show that  $U_{n+1} \geq V_{n+1}$. Let $\xi \in \U_{n+1}(T)$,
\begin{align*}
\xi_t = \sum_{k=0}^{n+1} \xi_k \cdot 1_{[\tau_k, \tau_{k+1})}(t), \quad t \in [0,T],
\end{align*}
with $\tau_0=0$ and $\tau_{n+1}=T$, be $\eps$-optimal for the problem in (\ref{eq:aux}), i.e.,
\begin{equation}\label{eq:eps-opt}
U_{n+1}(T,\vp,a)+\eps \geq J^{\xi}(T,\vp,a).
\end{equation}
Let $\tilde{\xi} \in \U_n(T)$ be defined as
$\tilde{\tau}_k=\tau_{k+1}$ , $\tilde{\xi}_k=\xi_{k+1}$,  for $k \in \mathbb{N}_+$. Using the
strong Markov property of $(\vP,P)$, we can write $J^{\xi}$ as
\begin{equation}\label{eq:eps-argument}
\begin{split}
J^{\xi}(T,\vp,a)&= \E^{\vp,a}\Bigl[\int_0^{\tau_1}\e^{-\rho s} c(P_s)\, ds+ \sum_\ell
\e^{-\sigma_\ell} 1_{\{ \sigma_\ell \le \tau_1 \} }K( (Y_\ell - P_{\sigma_\ell-})_+)  \\ &
\qquad + \e^{-\rho
\tau_1}\left(J^{\tilde{\xi}}(T-\tau_1,\vP_{\tau_1},P_{\tau_1-}+\xi_1)+h \cdot \xi_1+\zeta\right)\Bigr]
\\ &\geq \E^{\vp,a}\Bigl[\int_0^{\tau_1}\e^{-\rho s} c(P_s) \, ds+ \sum_\ell \e^{-\sigma_\ell}1_{\{ \sigma_\ell \le \tau_1 \} }K( (Y_\ell - P_{\sigma_\ell-})_+)  \\ &
\qquad \e^{-\rho
\tau_1}\left(V_{n}(T-\tau_1,\vP_{\tau_1},P_{\tau_1-}+\xi_1)+h \cdot \xi_1+\zeta\right)\Bigr]
\\ &\geq \E^{\vp,a}\Bigl[\int_0^{\tau_1}\e^{-\rho s} c(P_s) \, ds+ \sum_\ell \e^{-\sigma_\ell}1_{\{ \sigma_\ell \le \tau_1 \} }K( (Y_\ell - P_{\sigma_\ell-})_+)  \\ &
\qquad \e^{-\rho \tau_1} \M V_n  (T-\tau_1, \vP_{\tau_1},P_{\tau_1-})\Bigr]
\\ &\geq \G V_{n}(T,\vp,a)=V_{n+1}(T,\vp,a).
\end{split}
\end{equation}
Here, the first inequality follows from induction hypothesis, the second inequality follows
from the definition of $\M$, and the last inequality from the definition of $\G$. As a result
of (\ref{eq:eps-opt}) and (\ref{eq:eps-argument}) we have that $U_{n+1} \geq V_{n+1}$ since
$\eps>0$ is arbitrary.

\textbf{Step 2}. To show the opposite inequality $U_{n+1} \leq V_{n+1}$, we will construct a
special $\bar{\xi} \in \U_{n+1}(T)$. To this end let us introduce
\begin{align}\left\{ \begin{aligned}
\bar{\tau}_1 & = \inf\{t\geq 0: \M V_n(T-t,\vP_t, P_t) \leq V_{n+1}(T-t,\vP_t,P_t)+\eps \}, \\
\bar{\xi}_1 & = d_{\M V_n}(T-\bar{\tau}_1,\vP_{\bar{\tau}_1},P_{\bar{\tau}_1-}).
\end{aligned}\right.
\end{align}
\noindent  Let $\hat{\xi}_t = \sum_{k=0}^{n} \hat{\xi}_k \cdot 1_{[\hat{\tau}_k,
\hat{\tau}_{k+1})}(t)$, $\hat{\xi} \in \U_n(T)$ be $\eps$-optimal for the problem in which $n$
interventions are allowed, i.e.\ (\ref{eq:aux}). Using $\hat{\xi}$ we now complete the
description of the control $\bar{\xi} \in \U_{n+1}(T)$ by assigning,
\begin{equation}
\bar{\tau}_{k+1}=\hat{\tau}_k \circ \theta_{\tau_1}, \quad \bar{\xi}_{k+1}=\hat{\xi}_{k} \circ
\theta_{\bar{\tau}_1}, \quad k \in \mathbb{N}_+,
\end{equation}
in which $\theta$ is the classical \emph{shift operator} used in the theory of Markov processes.

Note that $\bar{\tau}_1$ is an $\eps$-optimal stopping time for the stopping problem in the
definition of $\G V_n$. This follows from the classical optimal stopping theory since the
process $(\vP, P)$ has the strong Markov property. Therefore,
\begin{equation}\label{eq:tos}
\begin{split}
V_{n+1}(T,\vp,a)+\eps &\geq \E^{\vp,a}\Bigl[\int_0^{\bar{\tau}_1}\e^{-\rho s}c(P_s) \, ds+
\sum_\ell \e^{-\sigma_\ell}1_{\{ \sigma_\ell \le \tau_1 \} }K( (Y_\ell - P_{\sigma_\ell-})_+)
\\ & \qquad +\e^{-\rho \bar{\tau}_1}\M V_{n}(T-\bar{\tau}_1,\vP_{\bar{\tau}_1},P_{\bar{\tau}_1-})\Bigr]
\\& \geq  \E^{\vp,a}\Bigl[\int_0^{\bar{\tau}_1}\e^{-\rho s}c(P_s)\, ds+ \sum_\ell \e^{-\sigma_\ell}1_{\{ \sigma_\ell \le \tau_1 \} }K( (Y_\ell - P_{\sigma_\ell-})_+)
\\ & \qquad + \e^{-\rho \bar{\tau}_1} \left(U_{n}(T-\bar{\tau}_1,\vP_{\bar{\tau}_1}, P_{\bar{\tau}_1-} + \bar{\xi}_1) + h \cdot \bar{\xi}_1+\zeta\right)\Bigr],
\end{split}
\end{equation}
in which the second inequality follows from the definition of $\bar{\xi}_1$ and the induction hypothesis.
It follows from (\ref{eq:tos}) and the strong Markov property of $(\vP,\xi)$ that
\begin{equation}
\begin{split}
V_{n+1}(T,\vp,a)+2\eps &\geq \E^{\vp,a}\Bigl[\int_0^{\bar{\tau}_1}\e^{-\rho s}c(P_s)\,ds+
\sum_\ell \e^{-\sigma_\ell}1_{\{ \sigma_\ell \le \tau_1 \} }K( (Y_\ell - P_{\sigma_\ell-})_+)
\\ & \qquad + \e^{-\rho \bar{\tau}_1}
\left(U_{n}(T-\bar{\tau}_1,\vP_{\bar{\tau}_1},P_{\bar{\tau}_1-} + \bar{\xi}_1)+\eps
+h \cdot \bar{\xi}_1+\zeta\right)\Bigr]
\\&\geq \E^{\vp,a}\Bigl[\int_0^{\bar{\tau}_1}\e^{-\rho s}c(P_s)\, ds+ \sum_\ell \e^{-\sigma_\ell}1_{\{ \sigma_\ell \le \tau_1 \} }K( (Y_\ell - P_{\sigma_\ell-})_+)
\\ & \qquad + \e^{-\rho \bar{\tau}_1} \left(J^{\hat{\xi}}(T-\bar{\tau}_1,\vP_{\bar{\tau}_1}, P_{\bar{\tau_1}-}+\xi_1)+h \cdot \bar{\xi}_1+\zeta\right)\Bigr]
\\&=J^{\bar{\xi}}(T,\vp,a) \leq U_{n+1}(T,\vp,a).
\end{split}
\end{equation}
This completes the proof of the second step since $\eps>0$ is arbitrary.
\end{proof}

\subsection{Proof of ~\ref{prop:vnconvU}}\label{sec:proofofconvprop}
Let us denote $V(T,\vp,a) \triangleq \lim_{n \rightarrow \infty}V_n(T,\vp,a)$, which is well-defined thanks to the monotonicity of $(V_n)_{n \in \N}$. Since $\U_n(T) \subset \U(T)$, it follows that
$V_n(T,\vp,a)=U_n(T,\vp,a) \geq U(T, \vp, a)$. Therefore $V(T,\vp,a) \geq U(T,\vp,a)$. In the
remainder of the proof we will show that $V(T,\vp,a) \leq U(T,\vp,a)$.

Let $\xi \in \U(T)$ and  $\tilde{\xi}_t \triangleq  \xi_{t \wedge \tau_n}$. Observe that $\tilde{\xi} \in
\U_n(T)$. Then
\begin{align} \notag
|J^{\xi}(T,\vp,a) & -J^{\tilde{\xi}}(T,\vp,a)|  \\  \label{eq:int-step}&  \leq
\E^{\vp,a,\xi}\bigg[\int_{\tau_n}^{T}\e^{-\rho s}|c(P_s(\xi))-c(P_s(\tilde{\xi}))| \, ds + \sum_{k
\geq n+1} \e^{-\rho \tau_k}(h \cdot \xi_{k} +\zeta)
\\ \notag & \quad + \sum_\ell [K( (Y_\ell -
P_s(\xi))_+) + K( (Y_\ell - P_s(\tilde{\xi}))_+)]1_{\tau_n < \sigma_\ell < T} \bigg]\\ \notag &
\leq 2c(\bar{P}) \, \E^{\vp,a,\xi}\left[\int_{\tau_n}^{T}\e^{-\rho s}ds\right]+2
K(R)\E^{\vp,a,\xi}\left[\sum_{\ell} 1_{\tau_n < \sigma_\ell < T} + \sum_{k \geq n+1}
\e^{-\rho \tau_k} (h \cdot \xi_k+\zeta)\right],
\end{align}
\noindent Now, the right-hand-side of (\ref{eq:int-step}) converges to 0 as $n \rightarrow
\infty$. Since there are only finitely many switches almost surely for any given path,
\[
\lim_{n \rightarrow \infty}\int_{0}^{T}1_{\{s>\tau_n\}}\e^{-\rho s}\, ds + \sum_{k \geq n+1}
\e^{-\rho \tau_k} (h \cdot \xi_k+\zeta)=0,
\]
The admissibility condition \eqref{eq:admissiblity} along with the dominated convergence theorem implies that
\[
\lim_{n \rightarrow \infty}\E^{\vp,a,\xi}\left[\int_{\tau_n}^{T}\e^{-\rho s} ds + \sum_{k \geq n+1}
\e^{-\rho \tau_k} (h \cdot \xi_k+\zeta)\right]=0.
\]

On the other hand,
 $$ \E^{\vp,a,\xi}[ \sum_{\ell} 1_{\sigma_\ell < T} ] = \E^{\vp,a,\xi}[ N(T) ] < \infty, $$ (see e.g. the estimate in \eqref{eq:est-lst}). Therefore by
the monotone convergence theorem
$$ \lim_{n \rightarrow \infty} \E^{\vp,a,\xi}[ \sum_{\ell} 1_{\tau_n < \sigma_\ell < T} ] = 0.$$

As a result, for any $\epsilon > 0$ and $n$ large enough, we find
\begin{equation*}
|J^{\xi}(T,\vp,a)-J^{\tilde{\xi}}(T,\vp,a)| \leq \eps.
\end{equation*}
Now, since $\tilde{\xi} \in \U_n(T)$ we have $V_n(T,\vp,a) =U_{n}(T,\vp,a)\leq
J^{\tilde{\xi}}(T,\vp,a) \leq J^{\xi}(T,\vp,a)+\eps$ for sufficiently large $n$, and it follows
that
\begin{equation}\label{eq:WgewJ}
V(T,\vp,a)=\lim_{n \rightarrow \infty}V_{n}(T,\vp,a)\leq J^{\xi}(T,\vp,a)+\eps.
\end{equation}
Since $\xi$ and $\eps$ are arbitrary, we have the desired result.

\subsection{Proof of Proposition~\ref{prop:dp}}\label{sec:proofofprop31}
\textbf{Step 1}. First we will show that $U$ is a fixed point of $\G$. Since $V_n \ge U$,
monotonicity of $\G$ implies that
\begin{multline*}
V_{n+1}(T,\vp,a)\geq \inf_{\tau \in \mathcal{S}(T)}\E^{\vp,a}\Bigl[\int_0^{\tau}\e^{-\rho
s}c(P_s) \, ds + \sum_{k} \e^{-\rho \sigma_k}1_{\{ \sigma_k \le \tau\}}K( (Y_k -
P_{\sigma_k-})_+) \\ + \e^{-\rho \tau}\M U(T-\tau, \vP_{\tau},P_{\tau-})\Bigr].
\end{multline*}
Taking the limit of the left-hand-side with respect to $n$ we obtain
\begin{multline*}
U(T,\vp,a) \geq \inf_{\tau \in \s(T)} \E^{\vp,a} \Bigl[ \int_0^\tau \e^{-\rho s} c(P_s) \, ds +
\sum_{k} \e^{-\rho \sigma_k}1_{\{ \sigma_k \le \tau\}}K( (Y_k - P_{\sigma_k-})_+) \\ +
\e^{-\rho \tau} \M U(T-\tau, \vP_{\tau}, P_{\tau-}) \Bigr].
\end{multline*}
Next, we will obtain the reverse inequality. Let $\tilde{\tau} \in \mathcal{S}(T)$ be an
$\eps$-optimal stopping time for the optimal stopping problem in the definition of $\G U$,
i.e.,
\begin{equation}\label{eq:eps-opti}
\begin{split}
\E^{\vp,a}&\left[\int_0^{\tilde{\tau}}\e^{-\rho s}c(P_s)\,ds + \sum_{k} \e^{-\rho
\sigma_k}1_{\{ \sigma_k \le \tilde{\tau}\}}K( (Y_k - P_{\sigma_k-})_+) + \e^{-\rho
\tilde{\tau}}\M U(T-\tilde{\tau}, \vP_{\tilde{\tau}},P_{\tilde{\tau}-})\right]
\\& \leq \G U(T,\vp,a)+ \eps.
\end{split}
\end{equation}
On the other hand, as a result of Proposition~\ref{prop:vnconvU} and the monotone convergence theorem
\begin{equation}\label{eq:lim-argu}
\begin{split}
U(T,\vp,a) & =\lim_{n \rightarrow \infty} V_{n}(T,\vp,a) \\  \leq \lim_{n \rightarrow
\infty}\E^{\vp,a} & \left[ \int_0^{\tilde{\tau}} \e^{-\rho s} c(P_s) \, ds + \sum_{k} \e^{-\rho
\sigma_k}1_{\{ \sigma_k \le \tilde{\tau}\}}K( (Y_k - P_{\sigma_k-})_+) + \e^{-\rho
\tilde{\tau}} \M V_{n-1}(T-\tilde{\tau}, \vP_{\tilde{\tau}}, P_{\tilde{\tau}-}) \right]
\\=\E^{\vp,a}&\left[\int_0^{\tilde{\tau}}\e^{-\rho s}c(P_s)\,ds + \sum_{k} \e^{-\rho
\sigma_k}1_{\{ \sigma_k \le \tilde{\tau}\}}K( (Y_k - P_{\sigma_k-})_+)  + \e^{-\rho
\tilde{\tau}}\M U(T-\tilde{\tau}, \vP_{\tilde{\tau}},P_{\tilde{\tau}-})\right] .
\end{split}
\end{equation}
Now, (\ref{eq:eps-opti}) and (\ref{eq:lim-argu}) together yield the desired result since $\eps$ is arbitrary.

\textbf{Step 2}. Let $\tilde{U}$ be another fixed point of $\G$ satisfying $\tilde{U} \leq
U_0=V_0$. Then an induction argument shows that $\tilde{U}\leq U$: Assume that $\tilde{U} \leq
V_n$. Then $\G \tilde{U} \leq \G V_n=V_{n+1}$, by the monotonicity of $\G$. Therefore for all
$n$, $\tilde{U} \leq V_n$, which implies that $\tilde{U} \leq \sup_n V_n =U$.
\hfill $\square$

\bibliographystyle{siam}

\bibliography{tracking-references
}
\end{document}